\newcommand{\noun}[1]{\textsc{#1}}
\numberwithin{equation}{section}
\numberwithin{figure}{section}
\theoremstyle{plain}
\newtheorem{thm}{\protect\theoremname}
\theoremstyle{definition}
\newtheorem{defn}[thm]{\protect\definitionname}
\theoremstyle{remark}
\newtheorem{rem}[thm]{\protect\remarkname}
\theoremstyle{plain}
\newtheorem{prop}[thm]{\protect\propositionname}
\theoremstyle{plain}
\newtheorem{cor}[thm]{\protect\corollaryname}
\theoremstyle{definition}
\newtheorem{example}[thm]{\protect\examplename}
\theoremstyle{plain}
\newtheorem{lem}[thm]{\protect\lemmaname}
\providecommand{\corollaryname}{Corollary}
\providecommand{\definitionname}{Definition}
\providecommand{\examplename}{Example}
\providecommand{\lemmaname}{Lemma}
\providecommand{\propositionname}{Proposition}
\providecommand{\remarkname}{Remark}
\providecommand{\theoremname}{Theorem}
\begin{document}
\title{Regular orders for triangular fuzzy numbers\\
and the Weak Law of Trichotomy}
\author{Jaime Cesar dos Santos Filho }
\email{jaime.cesar@ufrpe.br}
\address{Unidade Acadêmica do Cabo de Santo Agostinho \\
Universidade Federal Rural de Pernambuco\\
Cabo de Santo Agostinho, Pernambuco, 54518-430, Brazil}
\keywords{Triangular fuzzy numbers; Regular orders; Law of trichotomy; Fuzzy
absolute value and distance; \emph{MIN-MAX }compatibility; Arithmetic
compatibility; }
\begin{abstract}
Building upon specific compatibility conditions, we establish fundamental
structural results concerning ordering relations for triangular fuzzy
numbers. We demonstrate that orders satisfying compatibility with
arithmetic operations, \emph{MIN-MAX} operators, and the Weak Law
of Trichotomy (WLT) are completely determined on the fibers of the
natural projection to real numbers. Furthermore, such orders naturally
induce - in analogy with real numbers - well-defined notions of fuzzy
absolute value and fuzzy distance that preserve the essential properties
of their classical counterparts. These results enable us to characterize
open and closed balls through interval representations, providing
a robust theoretical framework for future studies regarding metric
properties of fuzzy numbers.
\end{abstract}

\maketitle

\section{Introduction}

Among various types of fuzzy quantities, fuzzy numbers occupy a central
position in theoretical developments, as they capture intuitive notions
of approximate quantities such as \textquotedbl numbers close to
$1$\textquotedbl , \textquotedbl approximately $0$\textquotedbl ,
or \textquotedbl numbers around the interval $\left[1,2\right]$\textquotedbl .
Within this class, triangular fuzzy numbers have yielded particularly
robust results regarding orders satisfying desirable conditions, owing
to their mathematical simplicity. The set of triangular fuzzy numbers,
denoted by $\mathbb{T}$, can be naturally identified with ordered
triples 
\[
\mathbb{T}=\left\{ \left(a_{1},a,a_{2}\right)\in\mathbb{R}^{3}\,:\,a_{1}\leq a\leq a_{2}\right\} ,
\]
 as detailed in \emph{Section  \ref{sec:DefinicoesBasicas}}.

The ordering of fuzzy quantities, particularly fuzzy numbers, represents
a fundamental challenge that has attracted considerable research attention.
While numerous ranking methods and preference criteria have been proposed,
significant disagreement persists in the field, with increasingly
technical and complex approaches emerging.

Our work adopts a novel perspective by beginning with desired conditions
that an order on $\mathbb{T}$ should satisfy, then systematically
deriving the properties of orders compatible with these conditions.
Unconventional concepts mentioned here are formally defined in \emph{Section
\ref{sec:DefinicoesBasicas}}.

Our first condition concerns \emph{compatibility with arithmetic operations}
on $\mathbb{T}$ - a property commonly required in fuzzy number classification
to enable feasible algebraic manipulation of inequalities. \emph{Theorem}
\emph{\ref{th:OrdemNosAnuladores}} establishes that arithmetic-compatible
orders are completely determined within each \emph{nullifying set}
(\ref{eq:Anuladores}).

The \emph{Weak Law of Trichotomy} (WLT) represents the closest analogue
to the trichotomy law for ordered fields that can be achieved in $\mathbb{T}$.
\emph{Theorem} \emph{\ref{thm:LFTeValorAbsoluto}} demonstrates that
orders satisfying both arithmetic compatibility and WLT induce:
\begin{enumerate}
\item A \emph{fuzzy absolute value} extending the classical real-valued
absolute value while preserving its essential properties.
\item An associated \emph{fuzzy distance} extending the standard metric
on $\mathbb{R}$.
\end{enumerate}
These results enable the characterization of \emph{open and closed
balls} through \emph{intervals} in $\mathbb{T}$ (\emph{Theorems}
\emph{\ref{thm:BolaseIntervalos1} }and \emph{\ref{thm:BolasEIntervalos2}}),
mirroring classical real analysis.

\emph{Section} \emph{\ref{sec:MIN-MAX-compatibilidadeEOrdemSomaSuperior}}
introduces \emph{compatibility with MIN-MAX operators} as an additional
condition. Since these operators derive from the \emph{extension principle}
(like arithmetic operations), this represents a natural requirement
for fuzzy number ranking methods. Our main structural result shows
that orders satisfying all three conditions (arithmetic compatibility,
\emph{MIN-MAX} compatibility, and WLT) are completely determined on
\emph{fibers of the natural projection}:
\begin{thm}
If $\preceq$ is an order on $\mathbb{T}$ compatible with arithmetic
operations, with MIN-MAX operators and satisfies WLT then for each
$t\in\mathbb{R}$:\\
i) $\left(x_{1},t,y_{1}\right)\preceq\left(x_{2},t,y_{2}\right)\Longleftrightarrow\left\{ \begin{array}{l}
\textrm{\ensuremath{x_{1}+y_{1}<x_{2}+y_{2},} or }\\
\textrm{\ensuremath{x_{1}+y_{1}=x_{2}+y_{2}} and \ensuremath{y_{1}\leq y_{2}},}
\end{array}\right.$ if $I_{0}\subseteq P_{\preceq}$. Or \\
ii) $\left(x_{1},t,y_{1}\right)\preceq\left(x_{2},t,y_{2}\right)\Longleftrightarrow\left\{ \begin{array}{l}
\textrm{\ensuremath{x_{1}+y_{1}<x_{2}+y_{2},} or }\\
\textrm{\ensuremath{x_{1}+y_{1}=x_{2}+y_{2}} and \ensuremath{x_{1}\leq x_{2}},}
\end{array}\right.$ if $I_{0}\cap P_{\preceq}=\emptyset$.
\end{thm}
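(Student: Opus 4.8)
The plan is to reduce the statement to the fiber over $0$ and to identify the order there with a lexicographic order whose primary key is the additive functional $\phi(x,0,y)=x+y$. First I would use compatibility with addition to translate by the crisp number $(-t,-t,-t)$, so that $(x_{1},t,y_{1})\preceq(x_{2},t,y_{2})\iff(x_{1}-t,0,y_{1}-t)\preceq(x_{2}-t,0,y_{2}-t)$; since this preserves both $x_{i}+y_{i}$ (up to the common constant $2t$) and the quantities $x_{i},y_{i}$ used in the tie-breaks, it suffices to treat $t=0$ and compare elements of the fiber $\{(x,0,y):x\le0\le y\}$. The structural observation I would record next is that any two such elements $A=(x_{1},0,y_{1})$ and $B=(x_{2},0,y_{2})$ are comparable either for the product order ($x_{1}\le x_{2}$ and $y_{1}\le y_{2}$, or the reverse) or for the inclusion order ($x_{1}\ge x_{2}$ and $y_{1}\le y_{2}$, or the reverse), simply because each of the pairs $x_{1},x_{2}$ and $y_{1},y_{2}$ is $\le$-comparable in $\mathbb{R}$. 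In the first case $\mathrm{MIN}(A,B)$ and $\mathrm{MAX}(A,B)$ coincide with $A$ and $B$, so MIN--MAX compatibility (which gives $\mathrm{MIN}(A,B)\preceq A\preceq\mathrm{MAX}(A,B)$, and likewise for $B$) yields the relation at once, and it agrees with the sign of $\phi(B)-\phi(A)$; in the second case the ``difference'' $B-A=(x_{2}-x_{1},0,y_{2}-y_{1})$ is again a genuine element of the fiber, so compatibility with addition reduces $A\preceq B$ to determining the sign of $B-A$.

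Both cases therefore funnel into one core statement, which I expect to be the main obstacle: for an element $C=(-p,0,q)$ of the fiber (with $p,q\ge0$) one must show $0\prec C$ whenever $\phi(C)=q-p>0$, $C\prec0$ whenever $\phi(C)<0$, and that the sign of $C$ is governed by the dichotomy when $\phi(C)=0$. The easy half is $p=0$: here $\mathrm{MIN}(C,0)=0$, so MIN--MAX compatibility forces $0\preceq C$ and then $0\prec C$ by antisymmetry; dually $(0,0,a)\succ0$ and $(-a,0,0)\prec0$ for $a>0$. The delicate point is a general $C=(-p,0,q)$ with $q>p>0$, where the naive decomposition $C=(-p,0,p)+(0,0,q-p)$ only settles matters when the symmetric part $(-p,0,p)$ happens to be positive. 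To handle it uniformly I would argue by contradiction using the opposite $-C=(-q,0,p)$, together with the fact that passing to opposites reverses $\preceq$ (compatibility with the arithmetic operations): assuming $C\prec0$ gives $0\prec -C$, hence $C\prec -C$; but $q>p$ means $C$ dominates $-C$ coordinatewise, so $\mathrm{MIN}(C,-C)=-C$ and MIN--MAX compatibility forces $-C\preceq C$, a contradiction. Since WLT makes $C$ comparable to $0$ and $C\ne0$, this leaves exactly $0\prec C$, and the mirror argument disposes of $\phi(C)<0$.

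It remains to pin down the tie-break when $\phi(A)=\phi(B)$, which is where the two alternatives i) and ii) originate. In the inclusion-comparable reduction above, $\phi(A)=\phi(B)$ forces $B-A$ to be a symmetric number $S_{d}=(-d,0,d)$ with $d=y_{2}-y_{1}=x_{1}-x_{2}$, i.e.\ an element of the nullifying set $I_{0}$ of (\ref{eq:Anuladores}). By Theorem~\ref{th:OrdemNosAnuladores} the order is completely determined on $I_{0}$, and since $S_{d}+S_{d'}=S_{d+d'}$ its sign is constant for $d>0$; WLT guarantees each such $S_{d}$ is comparable to $0$, so either $S_{d}\succ0$ for all $d>0$ (that is $I_{0}\subseteq P_{\preceq}$) or $S_{d}\prec0$ for all $d>0$ (that is $I_{0}\cap P_{\preceq}=\emptyset$), with no third possibility. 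In the first alternative $A\preceq B\iff0\preceq B-A\iff d\ge0\iff y_{1}\le y_{2}$, giving i); in the second the inequality reverses to $x_{1}\le x_{2}$, giving ii). Combining the primary criterion with this tie-break, and translating back by $(t,t,t)$, reproduces exactly the stated equivalences on each fiber. The step I expect to require the most care is the coordinatewise-domination contradiction for $C=(-p,0,q)$ with $q>p$, since it is the only place where compatibility with $\mathrm{MIN}$--$\mathrm{MAX}$, compatibility with opposites, and WLT must be played against one another simultaneously.
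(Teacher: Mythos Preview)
Your argument is correct in substance and reaches the conclusion, but it follows a genuinely different path from the paper's proof. The paper does not reduce to the fiber over $0$, does not split into product-comparable versus inclusion-comparable cases, and does not analyse the sign of a general $C=(-p,0,q)$. Instead, when $x_{1}+y_{1}<x_{2}+y_{2}$ it introduces the auxiliary element $\gamma=(x_{1},t,y_{1}+s)$ with $s=(x_{2}+y_{2})-(x_{1}+y_{1})$, uses MIN--MAX compatibility once to get $\alpha\prec\gamma$, and then invokes Proposition~\ref{prop:PropriedadeReciproca}.(ii) (nullifying sets are order-convex under WLT) to pass from $\gamma\in N_{ull}(\beta)$ to $\alpha\prec\beta$. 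The tie-break case is dispatched by Theorem~\ref{th:OrdemNosAnuladores}, just as in your proof. So the paper's argument is shorter because the hard work is packaged in Proposition~\ref{prop:PropriedadeReciproca}; your route is more self-contained, and your contradiction via $MIN(C,-C)=-C$ when $q>p$ is an elegant device that the paper does not use.

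Two small points deserve tightening. First, the object you call ``$B-A$'' and write as $(x_{2}-x_{1},0,y_{2}-y_{1})$ is \emph{not} the fuzzy difference $B+(-1)A$ (that would be $(x_{2}-y_{1},0,y_{2}-x_{1})$); what you are really using is that in the inclusion-comparable case one can write $B=A+C$ with $C=(x_{2}-x_{1},0,y_{2}-y_{1})\in\mathbb{T}$, and then the cancellation law gives $A\preceq B\iff 0\preceq C$. Second, the step ``$C\prec 0\Rightarrow 0\prec -C$'' is not a consequence of compatibility with arithmetic operations (adding $-C$ only yields $C-C\prec -C$, and $C-C\in I_{0}$, not $0$); it requires WLT together with $C\notin I_{0}$, which holds since $\phi(C)\neq 0$. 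You do invoke WLT a line later, so the ingredients are all present---just make sure the attribution is placed where it is actually needed.
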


In this work, an order compatible with arithmetic operations and \emph{MIN-MAX}
operators is called a \emph{regular} order. Regular orders satisfying
the WLT differ primarily in their treatment of triangular fuzzy numbers
from different projection fibers. The simplest way to \textquotedblleft accommodate\textquotedblright{}
distinct fibers is given by \emph{compatibility with the natural projection}.
The most restrictive condition we consider - compatibility with natural
projection - arises naturally in certain ranking methods (e.g., Molinari
\cite{Molinari}, see \emph{Example} \emph{\ref{exa:OrdemMolinari}}).
\emph{Theorem} \emph{\ref{thm:UnicidadeOrdemNormalLFT}} establishes
that only two regular orders satisfy both WLT and projection compatibility:
the \emph{upper sum} \emph{(\ref{eq:SomaSuperior})} and \emph{lower
sum} \emph{(\ref{eq:SomaInferior})}.

\emph{Section} \emph{\ref{sec:ExemplosNumericos}} analyzes various
fuzzy number ranking methods through numerical examples, examining
constraints on their extension to total orders on $\mathbb{T}$. We
compare these results with the \emph{total sum} \emph{(\ref{eq:OrdemSomaTotal})}
and \emph{upper sum} \emph{(\ref{eq:SomaSuperior})} orders - both
regular orders satisfying WLT.

\section{\label{sec:DefinicoesBasicas} Basic definitions and compatibility
conditions}

In this section, we present fundamental definitions concerning methods
for ranking triangular fuzzy numbers, the compatibility conditions
developed in this work, as well as the concepts of fuzzy absolute
value and fuzzy distance.

In this work, $\mathbb{R}$ denotes the set of \emph{real numbers},
$\mathbb{T}$ the set of \emph{triangular fuzzy numbers} and $\mathcal{F}\left(\mathbb{R}\right)$
the set of\emph{ fuzzy numbers}. We adopt the characterization of
fuzzy numbers given by Klir-Yuan \cite{Klir-Yuan}: a fuzzy number
is a function $\alpha:\mathbb{R}\longrightarrow\left[0,1\right]$
given by

\[
\alpha\left(t\right)=\left\{ \begin{array}{cl}
l\left(t\right), & t\in\left(-\infty,a\right);\\
1, & t\in\left[a,b\right];\\
r\left(t\right), & t\in\left(b,\infty\right),
\end{array}\right.
\]
 where $\left[a,b\right]\neq\emptyset$ is a closed interval, $l:\left(-\infty,a\right)\longrightarrow\left[0,1\right]$
is monotonic increasing, right-continuous and there exists $w_{1}\leq a$
with $l\left(t\right)=0$ for $t\in\left(-\infty,w_{1}\right)$, and
$r:\left(b,\infty\right)\longrightarrow\left[0,1\right]$ is monotonic
decreasing, left-continuous such that there exists $w_{2}\geq b$
with $r\left(t\right)=0$ for all $t\in\left(w_{2},\infty\right)$.
Here, $w_{1}=inf\,S_{upp}\left(\alpha\right)$ and $w_{2}=sup\,S_{upp}\left(\alpha\right)$
denote the \emph{infimum} and \emph{supremum}, respectively, of the
\emph{support} of $\alpha$.

Among the different types of fuzzy numbers, \emph{triangular fuzzy
numbers} are the most commonly used in examples and applications due
to their simplicity. A triangular fuzzy number is a fuzzy number $\alpha$
given by

\[
\alpha\left(t\right)=\left\{ \begin{array}{cl}
\frac{t-a_{1}}{a-a_{1}}, & t\in\left[a_{1},a\right);\\
1, & t=a;\\
\frac{a_{2}-t}{a_{2}-a}, & t\in\left(a,a_{2}\right];\\
0, & \textrm{otherwise,}
\end{array}\right.
\]
 for real numbers $a_{1}\leq a\leq a_{2}$, where $a_{1}=inf\,S_{upp}\left(\alpha\right)$
and $a_{2}=sup\,S_{upp}\left(\alpha\right)$. Thus, each triangular
fuzzy number $\alpha$ is uniquely determined by a triple of real
numbers $\left(a_{1},a,a_{2}\right)$, where $a_{1}\leq a\leq a_{2}$,
and we write $\alpha=\left(a_{1},a,a_{2}\right)$. Consequently, the
set of triangular fuzzy numbers is naturally identified with the set
of ordered triples
\[
\mathbb{T}=\left\{ \left(a_{1},a,a_{2}\right)\in\mathbb{R}^{3}\,:\,a_{1}\leq a\leq a_{2}\right\} 
\]
 (for further references on this approach, see \cite{Al-Amleh,Molinari,Zumelzu}).
Real numbers are identified as a subset of $\mathbb{T}$, $\mathbb{R}\simeq\left\{ \left(t,t,t\right)\,:\,t\in\mathbb{R}\right\} $,
referred to as \emph{scalars}. Here, no distinction is made between
$t\in\mathbb{R}$ and $\left(t,t,t\right)\in\mathbb{T}$, and we treat
$\mathbb{R}$ as a subset of $\mathbb{T}$. In terms of notation,
lowercase Greek letters ($\alpha,\beta,\gamma,\dots$) denote for
fuzzy numbers, while standard lowercase letters ($a,b,c,\dots$) denote
real numbers, unless stated otherwise. The symbols $\leq$, $<$ ,
and $\left|\cdot\right|$ are reserved for the usual order and absolute
value in $\mathbb{R}$.

\subsection{Ranking methods, arithmetic operations and the weak law of trichotomy}

~

Based on various ranking methods for fuzzy quantities, Wang-Kerre
\cite{Wang and Kerre} proposed a set of reasonable properties that
a fuzzy quantity ranking method should satisfy. According to properties
$\boldsymbol{A}_{1}-\boldsymbol{A}_{7}$ proposed in \emph{Section
3} of \cite{Wang and Kerre}, we define:
\begin{defn}
\label{def:MetodoRazoavel} A \emph{reasonable} \emph{ranking method}
$\preceq$ over $\mathbb{T}$ is a relation satisfying: \\
\emph{i)} \emph{(Reflexivity)} for all $\alpha\in\mathbb{T}$, $\alpha\preceq\alpha$;\\
\emph{ii) (Antisymmetry up to equivalence)} given $\alpha,\beta\in\mathbb{T}$,
if $\alpha\preceq\beta$ and $\beta\preceq\alpha$ then $\beta\sim\alpha$
(i.e., these elements are equivalent under this ranking method);\\
\emph{iii)} \emph{(Transitivity)} given $\alpha,\beta,\gamma\in\mathbb{T}$,
if $\alpha\preceq\beta$ and $\beta\preceq\gamma$ then $\alpha\preceq\gamma$;\\
\emph{iv)} \emph{(Sum compatibility)} given $\alpha,\beta,\gamma\in\mathbb{T}$,
if $\alpha\preceq\beta$ then $\beta+\gamma\preceq\alpha+\gamma$;\\
\emph{v)} \emph{(Scalar multiplication compatibility)} given $\alpha,\beta\in\mathbb{T}$
and $t\in\mathbb{R}$ with $0\preceq t$, if $\alpha\preceq\beta$
then $t\alpha\preceq t\beta$;\\
\emph{vi) (Strict order for disjoint supports)} given $\alpha,\beta\in\mathbb{T}$,
if $sup\,S_{upp}\left(\alpha\right)<inf\,S_{upp}\left(\beta\right)$
then $\alpha\prec\beta$ (i.e., $\alpha\preceq\beta$ and $\beta\npreceq\alpha$). 
\end{defn}

\begin{rem}
Due to property \emph{(vi)}, any reasonable ranking method $\preceq$
extends the usual ordering of real numbers: if $r<s$ in $\mathbb{R}$
then $r\prec s$ in $\mathbb{T}$.\emph{ }
\end{rem}

\begin{rem}
Condition \emph{(v)} requires $0\preceq t$. Under \emph{(vi)}, this
is equivalent to $0\leq t$.
\end{rem}

Our primary interest lies in total orders over $\mathbb{T}$. A total
order (or simply an order) is a reflexive, antisymmetric, transitive
relation over $\mathbb{T}$ in which any two elements are comparable:
\begin{defn}
\label{def:OrdemTotal} A \emph{(total) order} $\preceq$ on $\mathbb{T}$
is a relation satisfying:\\
\emph{i)} \emph{(Totality)} given $\alpha,\beta\in\mathbb{T}$, $\alpha\preceq\beta$
or $\beta\preceq\alpha$;\noun{ }\\
\emph{ii)} \emph{(Reflexivity)} for all $\alpha\in\mathbb{T}$, $\alpha\preceq\alpha$;\\
\emph{iii)} \emph{(Antisymmetry)} given $\alpha,\beta\in\mathbb{T}$,
if $\alpha\preceq\beta$ and $\beta\preceq\alpha$, then $\alpha=\beta$;\\
\emph{iv)} \emph{(Transitivity)} given $\alpha,\beta,\gamma\in\mathbb{T}$,
if $\alpha\preceq\beta$ and $\beta\preceq\gamma$, then $\alpha\preceq\gamma$.
\end{defn}

Note that totality implies reflexivity. A relation satisfying \emph{(ii
- iv)} without totality is called a \emph{partial order}. A reflexive
and transitive relation that is not necessarily antisymmetric is called
a \emph{preorder} (or \emph{preference relation}), which may be total
or partial. Here, an order refers to a total, antisymmetric preorder. 

If $\preceq$ is a preorder (total or partial, antisymmetric or not),
its dual preorder, denoted $\preceq^{*}$, is defined by $\alpha\preceq^{*}\beta$
if and only if $\alpha\succeq\beta$.

Using the \emph{extension principle} (see Klir-Yuan \cite{Klir-Yuan}),
the standard arithmetic operations on $\mathbb{R}$ ---addition,
multiplication, and the $max$ and $min$ operators--- extend to
fuzzy numbers as follows: given $\alpha,\beta\in\mathcal{F}\left(\mathbb{R}\right)$,\\
\emph{i)} $\left(\alpha\pm\beta\right)\left(z\right)=\underset{z=x\pm y}{sup}\,min\left\{ \alpha\left(x\right),\beta\left(y\right)\right\} $;
and\\
\emph{ii)} $\left(\alpha\cdot\beta\right)\left(z\right)=\underset{z=x\cdot y}{sup}\,min\left\{ \alpha\left(x\right),\beta\left(y\right)\right\} $;
and\\
\emph{iii)} $\left(\alpha\div\beta\right)\left(z\right)=\underset{z=x\div y}{sup}\,min\left\{ \alpha\left(x\right),\beta\left(y\right)\right\} $,
case $0\notin S_{upp}\left(\beta\right)$;\\
\emph{iv) $MIN\left(\alpha,\beta\right)\left(z\right)=\underset{z=min\left\{ x,y\right\} }{sup}\,min\left\{ \alpha\left(x\right),\beta\left(y\right)\right\} $
}and $MAX\left(\alpha,\beta\right)\left(z\right)=\underset{z=max\left\{ x,y\right\} }{sup}\,min\left\{ \alpha\left(x\right),\beta\left(y\right)\right\} $.

~

This approach extends real arithmetic to $\mathcal{F}\left(\mathbb{R}\right)$
while preserving several desirable properties (see \cite{Klir-Yuan}).
However, multiplication and division are not well-defined on $\mathbb{T}$
unless one operand is a scalar ($\beta\neq0$ in the case of division).
Restricting these operations to triangular fuzzy numbers yields the
following definitions:
\begin{defn}
\label{def:OperacoesAritmeticas} If $\alpha=\left(a_{1},a,a_{2}\right)$
and $\beta=\left(b_{1},b,b_{2}\right)$ are triangular fuzzy numbers
and $t=\left(t,t,t\right)$ is a scalar,\\
\emph{i) (Sum)} $\alpha+\beta=\left(a_{1}+b_{1},a+b,a_{2}+b_{2}\right);$\\
\emph{ii) (Scalar multiplication)} $t\alpha=\begin{cases}
\left(ta_{1},ta,ta_{2}\right), & t\geq0\\
\left(ta_{2},ta,ta_{1}\right), & t<0
\end{cases}$.

Subtraction is defined as $\alpha-\beta=\alpha+\left(-1\right)\beta$.
\end{defn}

A key difference from real numbers is the existence of nonzero $0$-symmetric
numbers: $\alpha\in\mathbb{T}$ such that $\alpha=-\alpha\neq0$.
We denote the set of such numbers by
\begin{equation}
I_{0}=\left\{ \alpha\in\mathbb{T}\backslash\left\{ 0\right\} \,:\,\alpha=-\alpha\right\} =\left\{ \left(-t,0,t\right)\in\mathbb{T}\,:\,t>0\in\mathbb{R}\right\} .\label{eq:0SimetricosNaoNulos}
\end{equation}
 The existence of these numbers prevents the law of trichotomy ---\textquotedbl given
$\alpha\in\mathbb{T}$, exactly one of the following holds: $\alpha=0$,
$\alpha\in P$ or $-\alpha\in P$\textquotedbl --- from being satisfied
for any $P\subset\mathbb{T}$. The closest feasible condition is the
\emph{weak law of trichotomy} (WLT):
\begin{defn}
\label{def:LFT} An order $\preceq$ on $\mathbb{T}$ satisfies the
\emph{weak law of trichotomy (WLT)} if, for all $\alpha\notin I_{0}$,
exactly one of the following holds: $\alpha=0$, $0\prec\alpha$ or
$0\prec-\alpha$.
\end{defn}

Note that the WLT is preserved under duality.
\begin{defn}
For $\alpha=\left(a_{1},a,a_{2}\right)\in\mathbb{T}$, we define the\emph{
nullifying set} of $\alpha$ to be
\begin{equation}
N_{ull}\left(\alpha\right)=\left\{ \beta\in\mathbb{T}\,:\,\alpha-\beta\in I_{0}\cup\left\{ 0\right\} \right\} =\left\{ \left(x,a,y\right)\in\mathbb{T}\,:\,x+y=a_{1}+a_{2}\right\} .\label{eq:Anuladores}
\end{equation}
 
\end{defn}

The following properties can be derived from (\ref{eq:Anuladores}):
\begin{enumerate}
\item $N_{ull}\left(\alpha\right)\cap N_{ull}\left(\beta\right)\neq\emptyset$
$\Longleftrightarrow N_{ull}\left(\alpha\right)=N_{ull}\left(\beta\right)$$\Longleftrightarrow$$\beta\in N_{ull}\left(\alpha\right)$;
\item $N_{ull}\left(0\right)=I_{0}\cup\left\{ 0\right\} $; 
\item $\alpha+N_{ull}\left(0\right)=\left\{ \left(b_{1},a,b_{2}\right)\in N_{ull}\left(\alpha\right)\,:\,a_{2}\le b_{2}\right\} $;
\item $N_{ull}\left(\alpha+t\beta\right)=N_{ull}\left(\alpha\right)+tN_{ull}\left(\beta\right)$. 
\end{enumerate}

\subsection{Compatibility conditions}

~
\begin{defn}
\emph{\label{def:CompatibilidadeOp.Aritm.}} \emph{(Compatibility
with arithmetic operations)} A order $\preceq$ over $\mathbb{T}$
is said to be:\\
\emph{ i)} \emph{compatible with sum:} for $\alpha,\beta,\gamma\in\mathbb{T}$,
if $\alpha\preceq\beta$ then $\alpha+\gamma\preceq\beta+\gamma$;\\
\emph{ii)} \emph{compatible with scalar multiplication:} for $\alpha,\beta\in\mathbb{T}$,
if $\alpha\preceq\beta$ then $t\alpha\preceq t\beta$ for every scalar
$t\geq0$.

If an order satisfies both conditions, it is said to be \emph{compatible
with arithmetic operations} or \emph{arithmetic-compatible}.
\end{defn}

Note: if $\preceq$ is total and compatible with sum, then $\alpha\prec\beta$
implies $\alpha+\gamma\prec\beta+\gamma$, since $\alpha+\gamma=\beta+\gamma$
would imply $\alpha=\beta$. More generally, the the cancellation
law holds: if $\alpha+\gamma\preceq\beta+\gamma$ then $\alpha\preceq\beta$.

\emph{Theorem} \emph{\ref{th:OrdemNosAnuladores}} shows that if $\preceq$
is compatible with arithmetic operations, then it is fully determined
within each nullifying set (\ref{eq:Anuladores}). Furthermore, under
$\preceq$, each nullifying set has either a smallest or largest element
(as seen in \emph{Theorem }\ref{th:OrdemNosAnuladores}).
\begin{defn}
\emph{\label{def:CompatibilidadeMinMax}} \emph{(Compatibility with
MIN-MAX operators)} An order $\preceq$ on $\mathbb{T}$ is \emph{compatible
with MIN-MAX operators} if, given $\alpha,\beta\in\mathbb{T}$ with
$MIN\left(\alpha,\beta\right)=\alpha$, then $\alpha\preceq\beta$.
\end{defn}

The next result is well known and concerns the \emph{MIN-MAX} operators
on triangular fuzzy numbers. Its proof will be omitted.
\begin{prop}
\label{prop:TFNcomparaveis} Given $\alpha=\left(a_{1},a,a_{2}\right)$
and $\beta=\left(b_{1},b,b_{2}\right)$ triangular fuzzy numbers with
$a\leq b$, the following cases arise:\\
i) if $a_{1}\leq b_{1}$ and $a_{2}\leq b_{2}$, then $MIN\left(\alpha,\beta\right)=\alpha$
and $MAX\left(\alpha,\beta\right)=\beta$; or\\
ii) if $a=b$ and $a_{1}<b_{1}<b_{2}<a_{2},$ then $MIN\left(\alpha,\beta\right),MAX\left(\alpha,\beta\right)\in\mathbb{T}$
despite $MIN\left(\alpha,\beta\right),MAX\left(\alpha,\beta\right)\notin\left\{ \alpha,\beta\right\} $;\\
iii) otherwise, $MIN\left(\alpha,\beta\right),MAX\left(\alpha,\beta\right)\notin\mathbb{T}$.

In particular, $MIN\left(\alpha,\beta\right)=\alpha$ if and only
if $a_{1}\leq b_{1}$, $a\leq b$ and $a_{2}\leq b_{2}$.
\end{prop}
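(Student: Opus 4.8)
The plan is to pass to the $\lambda$-cut (level-set) representation, where $MIN$ and $MAX$ become pointwise $\min$ and $\max$ of affine functions of the level $\lambda$, and then to read off triangularity from whether those affine functions cross. First I would record that for $\alpha=\left(a_{1},a,a_{2}\right)$ and $\lambda\in\left[0,1\right]$ one has $\left[\alpha\right]_{\lambda}=\left[\underline{\alpha}_{\lambda},\overline{\alpha}_{\lambda}\right]$ with $\underline{\alpha}_{\lambda}=\left(1-\lambda\right)a_{1}+\lambda a$ and $\overline{\alpha}_{\lambda}=\left(1-\lambda\right)a_{2}+\lambda a$; both endpoints are \emph{affine} in $\lambda$ and meet at $a$ when $\lambda=1$. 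Conversely, a fuzzy number whose level sets $\left[\underline{\gamma}_{\lambda},\overline{\gamma}_{\lambda}\right]$ have \emph{both} endpoints affine in $\lambda$ on $\left[0,1\right]$ is exactly a triangular number, and if either endpoint fails to be affine the number is not in $\mathbb{T}$. This affinity test is the criterion I will check.

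Next I would translate the extension-principle definitions into level sets. From $MIN\left(\alpha,\beta\right)\left(z\right)=\sup_{z=\min\left\{ x,y\right\} }\min\left\{ \alpha\left(x\right),\beta\left(y\right)\right\} $ one sees that $z\in\left[MIN\left(\alpha,\beta\right)\right]_{\lambda}$ iff $z=\min\left\{ x,y\right\} $ for some $x\in\left[\alpha\right]_{\lambda}$, $y\in\left[\beta\right]_{\lambda}$; since the image of $\min$ over a product of intervals is again an interval, this gives
\[
\left[MIN\left(\alpha,\beta\right)\right]_{\lambda}=\left[\min\left\{ \underline{\alpha}_{\lambda},\underline{\beta}_{\lambda}\right\} ,\ \min\left\{ \overline{\alpha}_{\lambda},\overline{\beta}_{\lambda}\right\} \right],
\]
and symmetrically with $\max$ for $MAX\left(\alpha,\beta\right)$. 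The key elementary lemma I would isolate is that the pointwise minimum (or maximum) of two affine functions $f,g$ on $\left[0,1\right]$ is affine iff $f-g$ keeps a constant sign, equivalently iff $\left(f-g\right)\left(0\right)$ and $\left(f-g\right)\left(1\right)$ are not strictly opposite in sign; a genuine interior crossing produces a kink. Applying this to the left endpoints, $\underline{\beta}_{\lambda}-\underline{\alpha}_{\lambda}=\left(1-\lambda\right)\left(b_{1}-a_{1}\right)+\lambda\left(b-a\right)$ has values $b_{1}-a_{1}$ at $\lambda=0$ and $b-a\ge0$ at $\lambda=1$, so it is cross-free precisely when $a_{1}\le b_{1}$ or $a=b$; the same computation on the right endpoints gives cross-free precisely when $a_{2}\le b_{2}$ or $a=b$. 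By the distributive law this yields the criterion: $MIN\left(\alpha,\beta\right),MAX\left(\alpha,\beta\right)\in\mathbb{T}$ iff $a=b$, or else $a_{1}\le b_{1}$ and $a_{2}\le b_{2}$.

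The three cases then fall out by reading signs. In case (i) one has $\underline{\alpha}_{\lambda}\le\underline{\beta}_{\lambda}$ and $\overline{\alpha}_{\lambda}\le\overline{\beta}_{\lambda}$ for every $\lambda$, so the minima are the $\alpha$-endpoints and the maxima the $\beta$-endpoints, giving $MIN=\alpha$ and $MAX=\beta$. In case (ii), $a=b$ with $a_{1}<b_{1}$ forces the left minimum to be $\underline{\alpha}_{\lambda}$ and the left maximum $\underline{\beta}_{\lambda}$, while $b_{2}<a_{2}$ forces the right minimum $\overline{\beta}_{\lambda}$ and the right maximum $\overline{\alpha}_{\lambda}$; hence $MIN=\left(a_{1},a,b_{2}\right)$ and $MAX=\left(b_{1},a,a_{2}\right)$, both triangular and, by the strict inequalities, distinct from $\alpha$ and $\beta$. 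For case (iii), when $a<b$ and not both $a_{1}\le b_{1}$ and $a_{2}\le b_{2}$, at least one endpoint pair crosses transversally, so that endpoint of $MIN$/$MAX$ is non-affine and the result leaves $\mathbb{T}$. The final biconditional is then immediate: $MIN\left(\alpha,\beta\right)=\alpha$ iff $\underline{\alpha}_{\lambda}\le\underline{\beta}_{\lambda}$ and $\overline{\alpha}_{\lambda}\le\overline{\beta}_{\lambda}$ for all $\lambda$, which by affinity is exactly $a_{1}\le b_{1}$, $a\le b$, $a_{2}\le b_{2}$.

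The main obstacle is the degenerate regime $a=b$, where the triangularity criterion holds automatically but the labeling into (i)/(ii) is not literal. Since $\alpha$ and $\beta$ play symmetric roles when $a=b$, the configuration with $\beta$'s support strictly wider than $\alpha$'s matches case (ii) only after swapping $\alpha\leftrightarrow\beta$, and the boundary configurations with some $a_{i}=b_{i}$ collapse $MIN$/$MAX$ back onto $\alpha$ or $\beta$ as in case (i). I would therefore, in the $a=b$ branch, first reduce to $a_{1}\le b_{1}$ by symmetry and then split on $a_{2}\le b_{2}$ (a case (i)-type outcome) versus $a_{2}>b_{2}$ (case (ii), handling the equality sub-cases separately); this bookkeeping is precisely where care is needed to make the stated trichotomy exhaustive and mutually exclusive.
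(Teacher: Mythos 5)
Your proof is correct, but there is nothing in the paper to compare it against: the paper explicitly declares \emph{Proposition \ref{prop:TFNcomparaveis}} well known and omits its proof, so your $\lambda$-cut argument stands on its own and is essentially the standard route (the one implicit in the reference to Klir--Yuan). Your chain of reductions is sound: the cut formula $\left[MIN\left(\alpha,\beta\right)\right]_{\lambda}=\left[\min\left\{ \underline{\alpha}_{\lambda},\underline{\beta}_{\lambda}\right\} ,\min\left\{ \overline{\alpha}_{\lambda},\overline{\beta}_{\lambda}\right\} \right]$ holds because the cuts are compact intervals, the supremum in the extension principle is attained, and the image of a product of intervals under $\min$ is again an interval; the kink lemma (the pointwise $\min$ or $\max$ of two affine functions on $\left[0,1\right]$ is affine iff their difference has no strict interior sign change) is exactly the right key observation and, since the difference is itself affine, reading its signs at $\lambda=0,1$ yields the trichotomy and the final biconditional. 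One point you should make explicit rather than leave tacit: affine endpoints alone characterize \emph{trapezoidal} numbers, so your converse criterion needs the additional remark that the core of $MIN\left(\alpha,\beta\right)$ and of $MAX\left(\alpha,\beta\right)$ is the singleton $\left\{ \min\left\{ a,b\right\} \right\} $, respectively $\left\{ \max\left\{ a,b\right\} \right\} $, which forces the level-one cut to be a point and hence triangularity. You were also right to dwell on the degenerate regime $a=b$: the proposition's literal case split is not exhaustive there --- for instance $\alpha=\left(0,0,2\right)$, $\beta=\left(-1,0,1\right)$ satisfies $a\leq b$ but falls under neither \emph{(i)} nor \emph{(ii)}, yet $MIN\left(\alpha,\beta\right)=\beta\in\mathbb{T}$ --- so your symmetry reduction (relabel when $a=b$ so that $a_{1}\leq b_{1}$, then split on $a_{2}\lessgtr b_{2}$) is not mere bookkeeping but a genuine repair of the statement's wording, and it is the part of your write-up most worth keeping verbatim.
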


By the above result, \emph{MIN-MAX} operators induce a partial order
in $\mathbb{T}$, known as \emph{Klir-Yuan order}:
\begin{align*}
\alpha & \leq_{KY}\beta\,\Longleftrightarrow\,MIN\left(\alpha,\beta\right)=\alpha\,\Longleftrightarrow\,MAX\left(\alpha,\beta\right)=\beta\\
 & \Longleftrightarrow\,\textrm{\ensuremath{inf\,S_{upp}\left(\alpha\right)\leq inf\,S_{upp}\left(\beta\right)} and \ensuremath{\pi\left(\alpha\right)\leq\pi\left(\beta\right)} and \ensuremath{sup\,S_{upp}\left(\alpha\right)\leq sup\,S_{upp}\left(\beta\right)}. }
\end{align*}

In \cite{Zumelzu}, Zumelzu et al. define admissible orders on $\mathcal{F}\left(\mathbb{R}\right)$
as those that are total and extend $\leq_{KY}$. Recently, García-Zamora
et al. \cite{Garcia et al} worked on the concept of \emph{OWA operators}
for fuzzy numbers with respect to admissible orders.

Since compatibility with arithmetic operations and\emph{ MIN-MAX}
operators is a common requirement, we unify these notions:
\begin{defn}
\label{def:OrdemRegular} A \emph{regular order} on $\mathbb{T}$
is an order compatible with both arithmetic operations and \emph{MIN-MAX}
operators.
\end{defn}

From \emph{Definition} \emph{\ref{def:MetodoRazoavel}} and \emph{Proposition}
\emph{\ref{prop:TFNcomparaveis}}, every regular order is a reasonable
ranking method.

Triangular fuzzy numbers often model \textquotedbl values close to
a real number\textquotedbl , where $\alpha=\left(t_{1},t,t_{2}\right)$
has $t$ as the expected/probable value and $t_{2}-t$, $t-t_{1}$
as uncertainty margins (upper and lower margins, respectively).
\begin{defn}
\label{def:ProjNatural} The \emph{natural projection} is the map
$\pi:\mathbb{T}\longrightarrow\mathbb{R}$, where $\pi\left(t_{1},t,t_{2}\right)=t$.
The preimage $\pi^{-1}\left(t\right)$ is called the \emph{fiber}
over $t\in\mathbb{R}$. 
\end{defn}

The pullback of the usual order of the real numbers via $\pi$ defines
a total preorder on $\mathbb{T}$:
\[
\alpha\preceq_{\pi}\beta\,\Longleftrightarrow\,\pi\left(\alpha\right)\leq\pi\left(\beta\right).
\]
The equivalence classes of $\preceq_{\pi}$ are precisely the fibers
of $\pi$.

\emph{Theorem \ref{thm:OrdensCompativeisSobreAsFibras}} states that
regular orders satisfying the weak law of trichotomy (WLT) are fully
determined on each fiber $\pi^{-1}\left(t\right)$, depending on whether
they admit positive $0$-symmetric elements.
\begin{defn}
\label{def:CompProjNatural} An order $\preceq$ on $\mathbb{T}$
is \emph{compatible with the natural projection} if it extends $\preceq_{\pi}$
( i.e. if $\alpha\prec_{\pi}\beta$, then $\alpha\prec\beta$).
\end{defn}

Such an order preserves fiber structure: no element \textquotedbl close\textquotedbl{}
to $a$ lies between two elements \textquotedbl close\textquotedbl{}
to $b$ if $a\neq b$. \emph{Theorem \ref{thm:UnicidadeOrdemNormalLFT}}
shows that the \emph{upper-sum} and \emph{lower-sum} orders (\emph{Eqs}.
\ref{eq:SomaSuperior}, \ref{eq:SomaInferior}) are the only regular,
WLT-satisfying orders compatible with $\pi$.

\subsection{Absolute value and fuzzy distance}

~

The absolute value in $\mathbb{R}$ is $\left|x\right|=max\left\{ x,-x\right\} $,
inducing the distance $\left|x-y\right|$. For a total order $\preceq$
on $\mathbb{T}$, the fuzzy absolute value is: 
\[
\left|\alpha\right|_{\preceq}=\underset{\preceq}{max}\left\{ \alpha,-\alpha\right\} .
\]

However, $\left|\cdot\right|_{\preceq}$ may not behave like the real
absolute value (e.g., $\left|\alpha\right|_{\preceq}\prec0$ is possible).
A well-behaved fuzzy absolute value should satisfy for arbitrary elements:\\
\emph{i)} $\left|\alpha\right|_{\preceq}\succeq0$; \\
\emph{ii)} $\left|t\alpha\right|_{\preceq}=\left|t\right|\left|\alpha\right|_{\preceq}$
(where $\left|t\right|$ is the real absolute value);\\
\emph{iii)} $\left|\alpha+\beta\right|_{\preceq}\preceq\left|\alpha\right|_{\preceq}+\left|\beta\right|_{\preceq}$;\\
\emph{iv) $\left|\alpha-\gamma\right|_{\preceq}\preceq\left|\alpha-\beta\right|_{\preceq}+\left|\beta-\gamma\right|_{\preceq}$.}
\begin{defn}
\label{def:ValorAbsolutoDifuso} An order $\preceq$ induces a \emph{fuzzy
absolute value} if $\left|\cdot\right|_{\preceq}$ satisfies \emph{(i)-(iv)}
above. In this case, the \emph{fuzzy distance} induced by $\preceq$
is:
\[
D_{\preceq}\left(\alpha,\beta\right)=\left|\alpha-\beta\right|_{\preceq},
\]
which satisfies:\\
\emph{i) (Positivity)} $0\preceq D_{\preceq}\left(\alpha,\beta\right)$,
and $D_{\preceq}\left(\alpha,\beta\right)=0$ if and only if $\alpha=\beta\in$;
\\
\emph{ii) (Symmetry)} $D_{\preceq}\left(\alpha,\beta\right)=D_{\preceq}\left(\beta,\alpha\right)$;\\
\emph{iii) (Triangular inequality)} $D_{\preceq}\left(\alpha,\gamma\right)\preceq D_{\preceq}\left(\alpha,\beta\right)+D_{\preceq}\left(\beta,\gamma\right)$.
\end{defn}

\begin{defn}
\emph{(Open/closed balls)} For $\beta,\gamma\in\mathbb{T}$ with $\gamma\succeq0$: 

$\bullet$ the \emph{open ball centered at $\beta$ of radius $\gamma$}
is: 
\[
\mathcal{B}_{\preceq}\left(\beta,\gamma\right)=\left\{ \alpha\in\mathbb{T}\,:\,\left|\alpha-\beta\right|_{\preceq}\prec\gamma\right\} .
\]

$\bullet$ the \emph{closed ball} is: 
\[
\overline{\mathcal{B}_{\preceq}}\left(\beta,\gamma\right)=\left\{ \alpha\in\mathbb{T}\,:\,\left|\alpha-\beta\right|_{\preceq}\preceq\gamma\right\} .
\]
\end{defn}

\begin{defn}
\emph{(Intervals) }A subset $I\subseteq\mathbb{T}$ is an \emph{interval}
if for all $\alpha,\beta\in I$ with $\alpha\prec\beta$, every $\gamma$
satisfying $\alpha\prec\gamma\prec\beta$ is in $I$. 
\end{defn}

For $\alpha,\beta\in\mathbb{T}$, we denote:

$\bullet$ \emph{open interval:} $\left(\alpha,\beta\right)_{\preceq}=\left\{ \gamma\in\mathbb{T}\,:\,\alpha\prec\gamma\prec\beta\right\} $;

$\bullet$ \emph{closed interval} $\left[\alpha,\beta\right]_{\preceq}=\left\{ \gamma\in\mathbb{T}\,:\,\alpha\preceq\gamma\preceq\beta\right\} $;

$\bullet$ Analogously for the \emph{half-open intervals} $\left[\alpha,\beta\right)_{\preceq}$
and $\left(\alpha,\beta\right]_{\preceq}$.
\begin{rem}
These are set-theoretic definitions (not topological conditions).
\end{rem}

\emph{Corollary \ref{cor:AnuladoresSaoIntervalos}} establishes that
if $\preceq$ is compatible with the arithmetic operations and satisfies
the weak law of trichotomy then each nullifying set (\ref{eq:Anuladores})
is an interval, as are the non-zero $0$-symmetric set $I_{0}$. \emph{Theorem}
\emph{\ref{thm:LFTeValorAbsoluto}} shows that these same orders,
when they have positive $0$-symmetric numbers, induce the best possible
notion of absolute value on the triangular fuzzy numbers. By adding
compatibility with \emph{MIN-MAX} operators, we can characterize the
open/closed balls through intervals. This is done in \emph{Theorems}
\emph{\ref{thm:BolaseIntervalos1} }and \emph{\ref{thm:BolasEIntervalos2}}.

\section{\label{sec:OrdensCompativeisLeiFracaTricotomia} Compatibility with
arithmetic operations, weak law of trichotomy \protect \\
and fuzzy absolute value}

Compatibility with arithmetic operations determines the order on each
nullifying set, depending solely on whether the order admits positive
$0$-symmetric numbers. Specifically, under such an order, every nullifying
set possesses either a minimum or maximum element. These results follow
directly from \emph{Theorem} \emph{\ref{th:OrdemNosAnuladores}}. 

\emph{Corollary} \emph{\ref{cor:AnuladoresSaoIntervalos}} demonstrates
that the weak law of trichotomy (WLT), when combined with compatibility
to arithmetic operations, implies that each nullifying set ---as
well as the set $I_{0}$ of nonzero $0$-symmetric numbers--- forms
an interval.

Furthermore, \emph{Theorem} \emph{\ref{thm:Compat.Op.AritmeticaseLFTePositivos}}
establishes that if two arithmetic-compatible orders share the same
set of positive elements and at least one satisfies the WLT, then
these orders must coincide.

When an arithmetic-compatible order satisfies the WLT and admits positive
$0$-symmetric numbers, it induces a well-defined fuzzy absolute value
and fuzzy distance, as proven in \emph{Theorem} \emph{\ref{thm:LFTeValorAbsoluto}}.
This property enables the characterization of some open/closed balls
in terms of intervals, as detailed in \emph{Theorem} \emph{\ref{thm:BolaseIntervalos1}}.

\subsection{Nullifying sets}

~

When $\mathbb{T}$ is equipped with an order $\preceq$ compatible
with arithmetic operations, the set of positive elements $P_{\preceq}=\left\{ \alpha\in\mathbb{T}\,:\,0\prec\alpha\right\} $
is closed under sum and multiplication by positive scalars. The set
of non-zero $0$-symmetric numbers, $I_{0}$, plays a crucial role
in this framework.
\begin{prop}
\label{prop:CompatibilidadeComPositivos} If $\preceq$ is an order
on $\mathbb{T}$ compatible with arithmetic operations then $I_{0}\subseteq P_{\preceq}$
or $I_{0}\cap P_{\preceq}=\emptyset$. Furthermore:\\
i) $I_{0}\subseteq P_{\preceq}$ if and only if for all $\alpha\in\mathbb{T}$,
$\alpha=0$ or $0\prec\alpha$ or $0\prec-\alpha$. \\
ii) $I_{0}\cap P_{\preceq}=\emptyset$ if and only if for all $\alpha\in\mathbb{T}$,
$\alpha=0$ or $\alpha\prec0$ or $-\alpha\prec0$.
\end{prop}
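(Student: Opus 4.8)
The plan is to reduce everything to the single generator $\iota=(-1,0,1)$ of $I_{0}$, establish the dichotomy via compatibility with scalar multiplication, and then prove the two equivalences using the crucial fact that in $\mathbb{T}$ the sum $\alpha+(-\alpha)$ is generally \emph{not} $0$ but an element of $I_{0}\cup\left\{ 0\right\} $.

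First I would note that every element of $I_{0}$ is a positive scalar multiple of $\iota$, namely $\left(-t,0,t\right)=t\iota$ for $t>0$, so $I_{0}=\left\{ t\iota\,:\,t>0\right\} $. Since $\iota\neq0$ and $\preceq$ is total, exactly one of $0\prec\iota$ or $\iota\prec0$ holds. If $0\prec\iota$, then compatibility with scalar multiplication applied to $0\preceq\iota$ gives $0=t\cdot0\preceq t\iota$ for every $t>0$, and since $t\iota\neq0$ we obtain $0\prec t\iota$; hence $I_{0}\subseteq P_{\preceq}$. Symmetrically, if $\iota\prec0$ then $t\iota\preceq t\cdot0=0$ with $t\iota\neq0$, so $t\iota\prec0$ and $I_{0}\cap P_{\preceq}=\emptyset$. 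This already proves the dichotomy.

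The main obstacle — and the reason the two alternatives are genuinely distinct rather than a vacuous trichotomy — is that for $\alpha=\left(a_{1},a,a_{2}\right)$ one has $\alpha+(-\alpha)=\left(a_{1}-a_{2},0,a_{2}-a_{1}\right)=\left(a_{2}-a_{1}\right)\iota$, which equals $0$ only when $\alpha$ is a scalar and otherwise lies in $I_{0}$. Writing $\delta_{\alpha}:=\alpha+(-\alpha)\in I_{0}\cup\left\{ 0\right\} $, I would prove (i) ($\Rightarrow$) by assuming $I_{0}\subseteq P_{\preceq}$ and taking $\alpha\neq0$: by totality either $0\prec\alpha$ (done) or $\alpha\prec0$, and in the latter case adding $-\alpha$ gives $\delta_{\alpha}\prec-\alpha$. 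If $\delta_{\alpha}=0$ this reads $0\prec-\alpha$; if $\delta_{\alpha}\in I_{0}$ then $0\prec\delta_{\alpha}$ by hypothesis, and transitivity yields $0\prec-\alpha$. The converse ($\Leftarrow$) is immediate, since applying the alternative to $\alpha\in I_{0}$, where $\alpha=-\alpha$, forces $0\prec\alpha$, so $I_{0}\subseteq P_{\preceq}$.

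Part (ii) is the dual statement and I would prove it by the same mechanism with signs reversed (equivalently, by applying (i) to the dual order $\preceq^{*}$). Assuming $I_{0}\cap P_{\preceq}=\emptyset$ and $\alpha\neq0$ with $0\prec\alpha$, adding $-\alpha$ gives $-\alpha\prec\delta_{\alpha}$; since $\delta_{\alpha}\in I_{0}\cup\left\{ 0\right\} $ and $I_{0}\cap P_{\preceq}=\emptyset$, totality forces $\delta_{\alpha}\preceq0$, whence $-\alpha\prec0$ by transitivity. The only delicate point throughout is to carry the defect $\delta_{\alpha}$ explicitly rather than assume it vanishes; once its membership in $I_{0}\cup\left\{ 0\right\} $ is pinned to a definite sign by the hypothesis on $I_{0}$, all three-way alternatives follow from totality, sum compatibility, and transitivity alone.
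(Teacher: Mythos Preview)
Your proof is correct and follows essentially the same route as the paper: both arguments establish the dichotomy via scalar-multiplication compatibility (the paper phrases it as ``for $\alpha,\beta\in I_{0}$ there is $r>0$ with $r\alpha=\beta$'', you use the explicit generator $\iota$), and both prove the equivalences by adding $-\alpha$ to the relevant inequality, noting that $\alpha-\alpha\in I_{0}\cup\{0\}$, and invoking the hypothesis on $I_{0}$ together with transitivity. Your treatment is slightly more careful in separating the case $\delta_{\alpha}=0$ (i.e.\ $\alpha$ scalar) from $\delta_{\alpha}\in I_{0}$, whereas the paper tacitly folds these together, but this is a cosmetic difference rather than a distinct approach.
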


\begin{proof}
Given $\alpha,\beta\in I_{0}$, there exists $r>0$ such that $r\alpha=\beta$.
By scalar multiplication compatibility, if $I_{0}\cap P_{\preceq}\neq\emptyset$
then $I_{0}\subseteq P_{\preceq}$. 

\emph{(i)} Suppose $I_{0}\subseteq P_{\preceq}$. Take $\alpha\neq0$
and $\alpha\notin P_{\preceq}$. Then $\alpha\prec0$ and, by sum
compatibility, $\alpha-\alpha\prec-\alpha$. Since $\alpha-\alpha\in I_{0}\subseteq P_{\preceq}$,
the transitivity of the order implies $0\prec-\alpha$. Conversely,
if the condition is satisfied then $I_{0}\subseteq P_{\preceq}$,
since $\alpha=-\alpha$ for all $\alpha\in I_{0}$.

\emph{(ii)} Suppose $I_{0}\cap P_{\preceq}=\emptyset$ and take $\alpha\in P_{\preceq}$.
Since $0\prec\alpha$, compatibility with the sum implies $-\alpha\prec\alpha-\alpha\prec0$
and by transitivity, $-\alpha\prec0$. The converse follows the same
idea given in \emph{(i)}.
\end{proof}
An order $\preceq$ on $\mathbb{T}$ is said to have \emph{positive
$0$-symmetric numbers} if there exists $\alpha\in I_{0}$ with $0\prec\alpha$.
The above result shows that assuming an arithmetic-compatible order
has positive $0$-symmetrics does not restrict generality ---it merely
selects between an order and its dual.

In decision-making contexts \cite{FRM}, orders with positive $0$-symmetric
numbers are associated with risk-prone behavior, whereas those where
$I_{0}\cap P_{\preceq}=\emptyset$ correspond to risk-averse decision-makers.
This is explored further in \emph{Example} \emph{\ref{exa:OrdensPessimistaeOtimista}}.
\begin{thm}
\label{th:OrdemNosAnuladores} Let $\preceq$ be a total order on
$\mathbb{T}$ compatible with arithmetic operations. Given $\alpha',\alpha''\in N_{ull}\left(\alpha\right)$
for any $\alpha\in\mathbb{T}$ then:\\
i) if $I_{0}\subseteq P_{\preceq}$, then $\alpha'\preceq\alpha''$
if and only if $sup\,S_{upp}\left(\alpha'\right)\leq sup\,S_{upp}\left(\alpha''\right)$;
or\\
ii) if $I_{0}\cap P_{\preceq}=\emptyset$, then $\alpha'\preceq\alpha''$
if and only if $sup\,S_{upp}\left(\alpha'\right)\geq sup\,S_{upp}\left(\alpha''\right)$.
That is, if and only if $inf\,S_{upp}\left(\alpha'\right)\leq inf\,S_{upp}\left(\alpha''\right)$.

Consequently, each nullifying set $\left(N_{ull}\left(\alpha\right),\preceq|_{N_{ull}\left(\alpha\right)}\right)$
has either a minimum (if $I_{0}\subseteq P_{\preceq}$ ) or a maximum
(if $I_{0}\cap P_{\preceq}=\emptyset$).
\end{thm}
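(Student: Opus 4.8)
The plan is to exploit the explicit description of the nullifying set from (\ref{eq:Anuladores}): writing $\alpha=(a_{1},a,a_{2})$ and $s=a_{1}+a_{2}$, every element of $N_{ull}(\alpha)$ has the form $(x,a,y)$ with $x+y=s$ and $x\le a\le y$. Consequently the map $\alpha'\mapsto\sup S_{upp}(\alpha')=y$ is a bijection from $N_{ull}(\alpha)$ onto the set of $y\in\mathbb{R}$ satisfying $y\ge a$ and $s-y\le a$, that is, onto the half-line $[\max\{a,s-a\},\infty)$; I would parametrize elements of the fibre this way throughout. First I would record the crucial algebraic identity: if $\alpha'=(x',a,y')$ and $\alpha''=(x'',a,y'')$ both lie in $N_{ull}(\alpha)$ and $y''>y'$, then setting $c=y''-y'>0$ one has $x''=s-y''=x'-c$, so that
\[
\alpha''=\alpha'+(-c,0,c),\qquad(-c,0,c)\in I_{0}.
\]
This is the heart of the argument, and it is the step I expect to require the most care, precisely because $\mathbb{T}$ has no additive inverses: one cannot pass from $\alpha'$ to $\alpha''$ by subtracting, but one can by adding the $0$-symmetric element $(-c,0,c)$, which (using Definition \ref{def:OperacoesAritmeticas}) leaves the middle coordinate and the endpoint-sum fixed while raising the right endpoint by $c$.

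With this identity in hand, the two cases follow immediately from compatibility with the sum together with Proposition \ref{prop:CompatibilidadeComPositivos}. In case (i), $I_{0}\subseteq P_{\preceq}$ gives $0\prec(-c,0,c)$, and the strict form of sum-compatibility (noted after Definition \ref{def:CompatibilidadeOp.Aritm.}) yields $\alpha'=\alpha'+0\prec\alpha'+(-c,0,c)=\alpha''$; hence $y'<y''$ forces $\alpha'\prec\alpha''$. In case (ii), $I_{0}\cap P_{\preceq}=\emptyset$ forces $(-c,0,c)\prec0$ (by totality a nonzero $0$-symmetric element is comparable to $0$, and it cannot be positive), whence $\alpha''\prec\alpha'$. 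Swapping the roles of $\alpha'$ and $\alpha''$, and observing that $y'=y''$ forces $x'=x''$ and therefore $\alpha'=\alpha''$, I would conclude by totality and antisymmetry that $\alpha'\preceq\alpha''\Leftrightarrow\sup S_{upp}(\alpha')\le\sup S_{upp}(\alpha'')$ in case (i), and the reversed equivalence in case (ii); the reformulation $\sup S_{upp}(\alpha')\ge\sup S_{upp}(\alpha'')\Leftrightarrow\inf S_{upp}(\alpha')\le\inf S_{upp}(\alpha'')$ is then just the relation $x=s-y$.

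Finally, for the existence of extrema I would note that the equivalences just proved say exactly that $\sup S_{upp}$ is an order isomorphism $(N_{ull}(\alpha),\preceq)\to([\max\{a,s-a\},\infty),\le)$ in case (i) and an order anti-isomorphism in case (ii). Since the half-line $[\max\{a,s-a\},\infty)$ attains its infimum but is unbounded above, $N_{ull}(\alpha)$ has a $\preceq$-minimum when $I_{0}\subseteq P_{\preceq}$ and a $\preceq$-maximum when $I_{0}\cap P_{\preceq}=\emptyset$, as claimed. The only genuinely delicate point remains the additive decomposition above; once it is established, everything else is a direct application of sum-compatibility.
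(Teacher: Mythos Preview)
Your proposal is correct and follows essentially the same route as the paper: both arguments hinge on the observation that any two elements of $N_{ull}(\alpha)$ differ by an element of $I_{0}$, and then invoke sum-compatibility together with the dichotomy $I_{0}\subseteq P_{\preceq}$ versus $I_{0}\cap P_{\preceq}=\emptyset$ from Proposition~\ref{prop:CompatibilidadeComPositivos}. Your parametrization by the right endpoint and the explicit order-isomorphism onto $[\max\{a,s-a\},\infty)$ make the existence of extrema slightly more transparent than in the paper, which leaves that step implicit, but the substance is the same.
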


\begin{proof}
Suppose $I_{0}\subseteq P_{\preceq}$. The following statements are
equivalent for $\alpha',\alpha''\in N_{ull}\left(\alpha\right)$:\\
a) $sup\,S_{upp}\left(\alpha'\right)<sup\,S_{upp}\left(\alpha''\right)$;
\\
b) there exists a scalar $t>0$ such that $sup\,S_{upp}\left(\alpha''\right)=sup\,S_{upp}\left(\alpha'\right)+t$
and $inf\,S_{upp}\left(\alpha''\right)=inf\,S_{upp}\left(\alpha'\right)-t$;\\
c) there is $\delta\in I_{0}$ such that $\alpha''=\alpha'+\delta$;
\\
d) $\alpha'\prec\alpha''$.

Implication (a) $\Longrightarrow$ (b) follows from the fact that
$inf\,S_{upp}\left(\alpha''\right)+sup\,S_{upp}\left(\alpha''\right)=inf\,S_{upp}\left(\alpha'\right)+sup\,S_{upp}\left(\alpha'\right)$
(see (\ref{eq:Anuladores}). Implication (b) $\Longrightarrow$ (c)
is straightforward. Implication (c) $\Longrightarrow$ (d) follows
from compatibility with the sum and the fact that $I_{0}\subseteq P_{\preceq}$.
And (d) implies (a) since if $\alpha'\preceq\alpha''$ then $sup\,S_{upp}\left(\alpha'\right)\leq sup\,S_{upp}\left(\alpha''\right)$,
otherwise the implications (a) $\Longrightarrow$ (b) $\Longrightarrow$
(c) lead us to a contradiction. This closes the case $I_{0}\subseteq P_{\preceq}$.

If $I_{0}\cap P_{\preceq}=\emptyset$, then the result follows from
duality since $\preceq^{*}$ satisfies \emph{(i)}. 
\end{proof}
In \emph{Section} \emph{\ref{sec:MIN-MAX-compatibilidadeEOrdemSomaSuperior}}
we continue to deal with the orders on the fibers of the natural projection.
For an arithmetic-compatible order $\preceq$, the presence of positive
$0$-symmetric numbers determines a key property: the ranking of elements
within each nullifying set $N_{ull}\left(\alpha\right)$ coincides
with the standard ordering of their support suprema ( $sup\,S_{upp}$$\left(\cdot\right)$).
If $\preceq$ has positive $0$-symmetric numbers, we denote the minimum
of $N_{ull}\left(\alpha\right)$ by $min\left(N_{ull}\left(\alpha\right)\right)$.
If $\preceq^{*}$ (the dual order) has positive $0$-symmetrics, we
denote the maximum by $max\left(N_{ull}\left(\alpha\right)\right)$.
\begin{rem}
\label{obs:ElementosMinimaisAnuladores} Let $\preceq$ be an arithmetic-compatible
order having positive $0$-symmetric numbers. Given $\alpha=\left(a_{1},a,a_{2}\right)\in\mathbb{T}$,
\[
min\left(N_{ull}\left(\alpha\right)\right)=\underset{\preceq}{min}\left\{ \left(x,a,y\right)\in\mathbb{T}\,:\,x+y=a_{1}+a_{2}\right\} .
\]
 Hence, if $a_{1}+a_{2}\leq2a$ then $min\left(N_{ull}\left(\alpha\right)\right)=\left(a_{1}+a_{2}-a,a,a\right)$.
If $a_{1}+a_{2}\geq2a,$ then $min\left(N_{ull}\left(\alpha\right)\right)=\left(a,a,a_{1}+a_{2}-a\right)$.
If $\preceq$ has no positive $0$-symmetric elements, these become
maxima instead.
\end{rem}

\begin{rem}
\label{obs:Min(-alpha)} In arithmetic-compatible orders: $min\left(N_{ull}\left(-\alpha\right)\right)=-min\left(N_{ull}\left(\alpha\right)\right)$
(or $max\left(N_{ull}\left(-\alpha\right)\right)=-max\left(N_{ull}\left(\alpha\right)\right)$,
when applicable).
\end{rem}

\begin{cor}
\label{cor:PropriedadeNormaLTP} Let $\preceq$ be an arithmetic-compatible
order. Given $\alpha,\beta\in\mathbb{T}$ with $\alpha\preceq\beta$
then:\\
i) if $I_{0}\subseteq P_{\preceq}$, then $\beta-\alpha\succeq0$;
and\\
ii) dually, if $I_{0}\cap P_{\preceq}=\emptyset$, then $\alpha-\beta\preceq0$. 

And $\beta-\alpha=0$ is valid only in the case $\beta=\alpha\in\mathbb{R}$. 
\end{cor}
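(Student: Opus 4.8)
The plan is to prove (i) directly from sum-compatibility, deduce (ii) by passing to the dual order, and dispose of the equality claim by a purely algebraic computation in $\mathbb{R}^{3}$.

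First I would handle (i). Assume $I_{0}\subseteq P_{\preceq}$ and $\alpha\preceq\beta$. Adding $-\alpha$ to both sides via sum-compatibility gives $\alpha-\alpha\preceq\beta-\alpha$. The crucial observation is that $\alpha-\alpha=\left(a_{1}-a_{2},0,a_{2}-a_{1}\right)$ always lies in $I_{0}\cup\left\{ 0\right\} $: it equals $0$ exactly when $\alpha$ is a scalar ($a_{1}=a_{2}$) and belongs to $I_{0}$ otherwise. In both cases $\alpha-\alpha\succeq0$ --- trivially when $\alpha$ is a scalar, and because $I_{0}\subseteq P_{\preceq}$ when it is not --- so transitivity yields $\beta-\alpha\succeq0$.

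Next I would obtain (ii) by duality. If $I_{0}\cap P_{\preceq}=\emptyset$, then by Proposition \ref{prop:CompatibilidadeComPositivos} every element of $I_{0}$ satisfies $\alpha\prec0$, hence $0\prec^{*}\alpha$, so $I_{0}\subseteq P_{\preceq^{*}}$. Since sum- and scalar-compatibility transfer verbatim to $\preceq^{*}$, the order $\preceq^{*}$ is again arithmetic-compatible; now $\alpha\preceq\beta$ reads $\beta\preceq^{*}\alpha$, and applying (i) to $\preceq^{*}$ gives $0\preceq^{*}\alpha-\beta$, i.e. $\alpha-\beta\preceq0$. One could equally argue directly, adding $-\beta$ to $\alpha\preceq\beta$ to reach $\alpha-\beta\preceq\beta-\beta$ and using $\beta-\beta\in I_{0}\cup\left\{ 0\right\} $ together with $I_{0}\cap P_{\preceq}=\emptyset$.

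Finally, the equality claim is order-independent. Writing $\beta-\alpha=\left(b_{1}-a_{2},b-a,b_{2}-a_{1}\right)$, the condition $\beta-\alpha=0$ forces $b=a$, $b_{1}=a_{2}$ and $b_{2}=a_{1}$; combined with the constraints $a_{1}\leq a_{2}$ and $b_{1}\leq b_{2}$ this gives $a_{1}=a_{2}$ and $b_{1}=b_{2}$, so $\alpha=\beta\in\mathbb{R}$. There is no serious obstacle here; the only point demanding care is the scalar/non-scalar split in the first step, since for a non-scalar $\alpha$ one has $\alpha-\alpha\in I_{0}$ and therefore $\beta-\alpha\succ0$ strictly even when $\alpha=\beta$ --- this is precisely why $\succeq$ cannot be sharpened to an equivalence and why the equality case is so restrictive.
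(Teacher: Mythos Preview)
Your proof is correct and follows essentially the same approach as the paper: both add $-\alpha$ (respectively $-\beta$) via sum-compatibility and use that $\alpha-\alpha\in I_{0}\cup\{0\}$ together with the hypothesis on $I_{0}$ to conclude by transitivity. Your treatment is slightly more detailed --- you spell out the duality argument for (ii) and give the explicit coordinate computation for the equality claim, neither of which the paper writes out --- but the underlying idea is identical.
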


\begin{proof}
Suppose $I_{0}\subseteq P_{\preceq}$. Since $\alpha\preceq\beta$,
sum compatibility implies $\alpha-\alpha\preceq\beta-\alpha$. Since
$\alpha-\alpha\in N_{ull}\left(0\right)=I_{0}\cup\left\{ 0\right\} $,
transitivity implies $\beta-\alpha\succeq0$. Dually, if $I_{0}\cap P_{\preceq}=\emptyset$,
then sum compatibility implies that $\alpha-\beta\preceq\beta-\beta$
and $\beta-\beta\in N_{ull}\left(0\right)$. By transitivity $\alpha-\beta\preceq0$.
\end{proof}
The converse of the above corollary is not true in general: if $I_{0}\subseteq P_{\preceq}$
and $\beta-\alpha\succ0$, then compatibility with sum implies that
$\beta+\alpha-\alpha\succ\alpha$ but does not guarantee $\beta\succ\alpha$.
We also have no guarantee that $\alpha-\beta\prec0$. For example,
if $\beta\in\alpha+I_{0}$ then $\beta-\alpha\succ0$ and $\alpha-\beta\succ0$.
And obviously, one of the conditions $\beta\succ\alpha$ or $\alpha\succ\beta$
is false.
\begin{example}
\label{exa:Ordensijk} Consider the lexicographic orders $\leq_{ijk}$
(where $\left\{ i,j,k\right\} =\left\{ 1,2,3\right\} $) given by
\begin{align*}
\left(a_{1},a_{2},a_{3}\right) & \leq_{ijk}\left(b_{1},b_{2},b_{3}\right)\,\Longleftrightarrow\,\left\{ \begin{array}{l}
\textrm{\ensuremath{a_{i}<b_{i}}; or}\\
\textrm{\ensuremath{a_{i}=b_{i}} and \ensuremath{a_{j}<b_{j}}; or}\\
\textrm{\ensuremath{a_{i}=b_{i}} and \ensuremath{a_{j}=b_{j}} and \ensuremath{a_{k}\leq b_{k}}.}
\end{array}\right.
\end{align*}

All orders $\leq_{ijk}$ are compatible with arithmetic operations.
Of these, $\le_{231}$, $\le_{312}$ and $\le_{321}$ have positive
$0$-symmetric numbers. Considering $\alpha=\left(-1,2,3\right)$
we have $\alpha,-\alpha\prec0$ for $\prec\in\left\{ <_{123},<_{132}\right\} $
and $-\alpha\prec\alpha$. While $0\prec\alpha,-\alpha$ for $\prec\in\left\{ <_{312},<_{321}\right\} $
and we also have $-\alpha\prec\alpha$. It is straightforward to find
similar examples for the orders $\leq_{213}$ and $\leq_{231}$. For
example, if $\beta=\left(-1,0,2\right)$ then $\beta,-\beta<_{213}0$
and $0<_{231}\beta,-\beta$.

\end{example}

\subsection{Weak law of trichotomy and the set of positives}

~

For arithmetic-compatible orders, the WLT prevents the \textquotedbl shuffling\textquotedbl{}
of nullifying sets, as demonstrated in \emph{Proposition} \emph{\ref{prop:PropriedadeReciproca}}.
The main consequence of \emph{Theorem \ref{thm:Compat.Op.AritmeticaseLFTePositivos}}
is that arithmetic-compatible orders satisfying the WLT are entirely
determined by their set of positive numbers.

The following result is crucial for the proof of \emph{Theorem \ref{thm:Compat.Op.AritmeticaseLFTePositivos}}.
Part \emph{(i)} establishes the converse of \emph{Corollary} \emph{\ref{cor:PropriedadeNormaLTP}}.
\begin{prop}
\label{prop:PropriedadeReciproca} Let $\preceq$ be an arithmetic-compatible
order satisfying the WLT. For $\alpha,\beta\in\mathbb{T}$ with $N_{ull}\left(\alpha\right)\neq N_{ull}\left(\beta\right)$:\\
i) $\alpha\prec\beta$~ $\Longleftrightarrow$ ~$0\prec\beta-\alpha$~
$\Longleftrightarrow$~ $\alpha-\beta\prec0$~ $\Longleftrightarrow$~
$-\beta\prec-\alpha$; and \\
ii) If $\alpha\prec\beta$, then $\alpha\prec\beta'$  for all $\beta'\in N_{ull}\left(\beta\right)$.
Just as, if $\alpha\succ\beta$, then $\alpha\succ\beta'$ for all
$\beta'\in N_{ull}\left(\beta\right)$.
\end{prop}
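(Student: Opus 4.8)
The plan is to derive both parts from a single core equivalence, namely the first one in~(i): whenever $N_{ull}(\alpha)\neq N_{ull}(\beta)$,
\[
\alpha\prec\beta\ \Longleftrightarrow\ 0\prec\beta-\alpha .
\]
Since the WLT is preserved under duality (as noted after Definition~\ref{def:LFT}) and $\preceq^{*}$ is again arithmetic-compatible, I would assume throughout that $I_{0}\subseteq P_{\preceq}$ and recover the case $I_{0}\cap P_{\preceq}=\emptyset$ by passing to $\preceq^{*}$. The structural fact used repeatedly is that $N_{ull}(\alpha)\neq N_{ull}(\beta)$ is equivalent to $\beta-\alpha\notin I_{0}\cup\{0\}$ (immediate from property~(1) after~(\ref{eq:Anuladores}) and the defining relation $\gamma\in N_{ull}(\delta)\Longleftrightarrow\delta-\gamma\in I_{0}\cup\{0\}$, using that $I_{0}\cup\{0\}$ is closed under negation). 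In particular $\beta-\alpha\neq0$, so WLT applies to $\beta-\alpha$.

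For the forward direction of the core equivalence, suppose $\alpha\prec\beta$. Corollary~\ref{cor:PropriedadeNormaLTP}(i) gives $\beta-\alpha\succeq0$, and since $\beta-\alpha\neq0$ this strengthens to $0\prec\beta-\alpha$. The reverse direction is the crux of the whole proposition, since it is exactly the converse of Corollary~\ref{cor:PropriedadeNormaLTP}, which fails without WLT. Assume $0\prec\beta-\alpha$. Applying WLT to $\beta-\alpha\notin I_{0}\cup\{0\}$ rules out $0\prec\alpha-\beta$. If $\alpha\not\prec\beta$, then totality together with $\alpha\neq\beta$ forces $\beta\prec\alpha$, and the forward direction applied to the pair $(\beta,\alpha)$ yields $0\prec\alpha-\beta$, a contradiction; hence $\alpha\prec\beta$. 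I expect this step to be the main obstacle: WLT is precisely what eliminates the pathology described after Corollary~\ref{cor:PropriedadeNormaLTP} (where $\beta\in\alpha+I_{0}$ gives both $\beta-\alpha\succ0$ and $\alpha-\beta\succ0$), and the hypothesis $N_{ull}(\alpha)\neq N_{ull}(\beta)$ is exactly the condition excluding that case.

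The remaining equivalences in~(i) follow by specializing the core equivalence to the pairs $(\alpha-\beta,0)$ and $(-\beta,-\alpha)$. In each case the element playing the role of ``$\beta-\alpha$'' is once more $\beta-\alpha$ (namely $0-(\alpha-\beta)$ and $(-\alpha)-(-\beta)$), so the distinctness hypothesis is inherited from the original one (again using that $I_{0}\cup\{0\}=N_{ull}(0)$ is closed under negation). This produces $\alpha-\beta\prec0\Longleftrightarrow0\prec\beta-\alpha$ and $-\beta\prec-\alpha\Longleftrightarrow0\prec\beta-\alpha$, closing the chain of four equivalences.

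For~(ii), let $\alpha\prec\beta$ and $\beta'\in N_{ull}(\beta)$. Then $N_{ull}(\beta')=N_{ull}(\beta)\neq N_{ull}(\alpha)$, so the core equivalence applies to $(\alpha,\beta')$ and it suffices to establish $0\prec\beta'-\alpha$. Writing $\beta'-\alpha=(\beta'-\beta)+(\beta-\alpha)$, I note $\beta'-\beta\in I_{0}\cup\{0\}$, hence $\beta'-\beta\succeq0$ (here $I_{0}\subseteq P_{\preceq}$); combining this with $0\prec\beta-\alpha$ (from the core equivalence applied to $\alpha\prec\beta$) via sum-compatibility and transitivity gives $0\prec\beta'-\alpha$, i.e. $\alpha\prec\beta'$. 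The dual statement for $\alpha\succ\beta$ is obtained by the same argument (or by applying the established case to $\preceq^{*}$).
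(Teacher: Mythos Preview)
Your argument for part~(i) is correct and in fact somewhat cleaner than the paper's: the paper proves the four statements as a cycle $\alpha\prec\beta\Rightarrow 0\prec\beta-\alpha\Rightarrow\alpha-\beta\prec0\Rightarrow -\beta\prec-\alpha\Rightarrow\alpha\prec\beta$ by successive manipulations, whereas you isolate the single equivalence $\alpha\prec\beta\Longleftrightarrow 0\prec\beta-\alpha$ and obtain the others by substitution. Both routes use Corollary~\ref{cor:PropriedadeNormaLTP} for one direction and WLT for the other; your use of totality plus contradiction for the reverse implication is a legitimate alternative to the paper's direct chain $\alpha\preceq\alpha+\beta-\beta\preceq\beta$.

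However, your proof of part~(ii) contains a genuine gap. You write $\beta'-\alpha=(\beta'-\beta)+(\beta-\alpha)$, but this identity is \emph{false} in $\mathbb{T}$: subtraction is not inverse to addition, so the $\beta$'s do not cancel. Concretely, with $\beta=(0,1,2)$, $\beta'=(-1,1,3)\in N_{ull}(\beta)$ and $\alpha=0$, one has $\beta'-\alpha=(-1,1,3)$ while $(\beta'-\beta)+(\beta-\alpha)=(-3,0,3)+(0,1,2)=(-3,1,5)$. What is true is that these two elements lie in the same nullifying set, so your argument establishes $0\prec(\beta'-\beta)+(\beta-\alpha)\in N_{ull}(\beta'-\alpha)$; but passing from this to $0\prec\beta'-\alpha$ is precisely an instance of~(ii), so the reasoning is circular.

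The paper avoids this by first treating the special case $\beta=0$: if $\gamma\notin N_{ull}(0)$ and $0\prec\gamma$, then for any $\delta\in N_{ull}(0)$ one has $-\gamma\prec0\preceq\delta$ by WLT, whence $\delta=-\delta\prec\gamma$ by~(i). The general case then follows by translating: from $\alpha\prec\beta$ one gets $\alpha-\beta'\prec\beta-\beta'\in N_{ull}(0)$ via sum compatibility, the special case gives $\alpha-\beta'\prec0$, and~(i) yields $\alpha\prec\beta'$. The key point you missed is that elements of $N_{ull}(0)$ are their own negatives, which is what makes the base case go through without the failed cancellation.
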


\begin{proof}
Suppose $I_{0}\subseteq P_{\preceq}$.

\emph{(i)} If $\alpha\prec\beta$, \emph{Corollary} \emph{\ref{cor:PropriedadeNormaLTP}.(i)}
implies that $\beta-\alpha\succ0$. Since $\beta-\alpha\notin N_{ull}\left(0\right)$,
$\alpha-\beta\prec0$ because of WLT. By sum compatibility, $\alpha-\alpha-\beta\prec-\alpha$.
Since $\alpha-\alpha\in N_{ull}\left(0\right)$, $\alpha-\alpha-\beta\succeq-\beta,$
and the result follows by transitivity. 

Conversely, from the fact that $-\beta\prec-\alpha$, \emph{Corollary}
\emph{\ref{cor:PropriedadeNormaLTP}.(i)} implies that $0\prec-\alpha-\left(-\beta\right)=\beta-\alpha$.
Since $\beta-\alpha\notin N_{ull}\left(0\right)$, $\alpha-\beta\prec0$
because of WLT. Then $\alpha\preceq\alpha+\beta-\beta\preceq\beta$.

\emph{(ii) }First, assume $\alpha\notin N_{ull}\left(0\right)$. If
$\alpha\prec0$, then by transitivity, $\alpha\prec\delta$ for all
$\delta\in N_{ull}\left(0\right)$. If $0\prec\alpha$ and $\delta\in N_{ull}\left(0\right)$,
then by WLT and transitivity, $-\alpha\prec0\preceq\delta$. By \emph{(i)},
$\delta=-\delta\prec\alpha$.

More generally, let $\alpha\notin N_{ull}\left(\beta\right)$ and
$\beta'\in N_{ull}\left(\beta\right)$. If $\alpha\prec\beta$, then
$\alpha-\beta'\prec\beta-\beta'$ by sum compatibility. Since $\beta-\beta'\in N_{ull}\left(0\right)$
and $\alpha-\beta'\notin N_{ull}\left(0\right)$, the previous argument
implies $\alpha-\beta'\prec0$. By \emph{(i)}, $\alpha\prec\beta'$.
If $\beta\prec\alpha$, then $\beta-\beta'\prec\alpha-\beta'$, and
by transitivity, $0\prec\alpha-\beta'$. By \emph{(i)}, $\beta'\prec\alpha$.

If $I_{0}\cap P_{\preceq}=\emptyset$, the result follows by duality,
provided $\preceq^{*}$ is an order compatible with arithmetic operations,
satisfies the WLT, and $I_{0}\subseteq P_{\preceq^{*}}$. 
\end{proof}
\begin{cor}
\label{cor:AnuladoresSaoIntervalos} In an arithmetic-compatible order
satisfying the WLT, each nullifying set is an interval, as is the
set $I_{0}$. 
\end{cor}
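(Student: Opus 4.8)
The plan is to prove the statement for a general nullifying set $N_{ull}\left(\alpha\right)$ first, and then obtain the claim for $I_{0}$ almost for free, since $I_{0}=N_{ull}\left(0\right)\setminus\left\{ 0\right\} $ is just $N_{ull}\left(0\right)$ with one extreme point removed. The key observation is that \emph{Proposition} \emph{\ref{prop:PropriedadeReciproca}} (ii) is exactly the engine that forbids an element of one nullifying set from being sandwiched between two elements of a different nullifying set.

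First I would fix $\alpha',\alpha''\in N_{ull}\left(\alpha\right)$ with $\alpha'\prec\alpha''$ and an arbitrary $\gamma$ with $\alpha'\prec\gamma\prec\alpha''$, and aim to show $\gamma\in N_{ull}\left(\alpha\right)$. Arguing by contradiction, I would suppose $\gamma\notin N_{ull}\left(\alpha\right)$. Since $\alpha'\in N_{ull}\left(\alpha\right)$, property (1) following (\ref{eq:Anuladores}) gives $N_{ull}\left(\alpha''\right)=N_{ull}\left(\alpha\right)$, while $\gamma\notin N_{ull}\left(\alpha''\right)$ forces $N_{ull}\left(\gamma\right)\neq N_{ull}\left(\alpha''\right)$. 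This is precisely the hypothesis needed to apply \emph{Proposition} \emph{\ref{prop:PropriedadeReciproca}} (ii) to the pair $\gamma\prec\alpha''$: it upgrades the single comparison into $\gamma\prec\beta'$ for \emph{every} $\beta'\in N_{ull}\left(\alpha''\right)$. Taking $\beta'=\alpha'$ yields $\gamma\prec\alpha'$, contradicting $\alpha'\prec\gamma$. Hence $\gamma\in N_{ull}\left(\alpha\right)$, and each nullifying set is an interval.

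For $I_{0}$ I would specialize the above to $\alpha=0$, so that $N_{ull}\left(0\right)=I_{0}\cup\left\{ 0\right\} $ is an interval. Given $\alpha',\alpha''\in I_{0}$ with $\alpha'\prec\gamma\prec\alpha''$, the interval property of $N_{ull}\left(0\right)$ already places $\gamma\in I_{0}\cup\left\{ 0\right\} $, so the only thing left is to exclude $\gamma=0$. Here I would invoke \emph{Proposition} \emph{\ref{prop:CompatibilidadeComPositivos}}: if $I_{0}\subseteq P_{\preceq}$ then $0\prec\alpha'$, which is incompatible with $\alpha'\prec\gamma=0$; and if $I_{0}\cap P_{\preceq}=\emptyset$ then $\alpha''\prec0$, which is incompatible with $\gamma=0\prec\alpha''$. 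In both cases $\gamma\neq0$, so $\gamma\in I_{0}$.

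The main (and really only) obstacle is seeing that \emph{Proposition} \emph{\ref{prop:PropriedadeReciproca}} (ii) does all the work: it converts ``$\gamma$ lies below one point of a nullifying set'' into ``$\gamma$ lies below the entire nullifying set,'' which is exactly what rules out sandwiching. Once that is in place, everything else is routine bookkeeping with the disjointness of nullifying sets and the position of $0$ relative to $I_{0}$.
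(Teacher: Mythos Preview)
Your argument is correct and is exactly the intended one: the paper states this corollary without proof immediately after \emph{Proposition~\ref{prop:PropriedadeReciproca}}, relying on part~(ii) precisely as you do. The only cosmetic slip is that you write ``Since $\alpha'\in N_{ull}(\alpha)$'' when justifying $N_{ull}(\alpha'')=N_{ull}(\alpha)$; of course it is $\alpha''\in N_{ull}(\alpha)$ that gives this, but both endpoints lie in $N_{ull}(\alpha)$ by hypothesis, so the argument is unaffected.
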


\begin{thm}
\label{thm:Compat.Op.AritmeticaseLFTePositivos} Let $\preceq_{1},\preceq_{2}$
be arithmetic-compatible orders on $\mathbb{T}$, with $\preceq_{2}$
satisfying the WLT. Then they have the same set of positives if and
only if they are equal (i.e. $\alpha\preceq_{1}\beta$ $\Longleftrightarrow$
$\alpha\preceq_{2}\beta$).
\end{thm}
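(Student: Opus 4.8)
The backward implication needs no work: the set of positives $P_{\preceq}=\{\alpha:0\prec\alpha\}$ is defined purely from the relation, so equal orders share the same positives. All the content is in the forward direction, and my plan is to reduce to the case $I_{0}\subseteq P_{\preceq}$ and then compare the two orders nullifying-set by nullifying-set. First I would set $P:=P_{\preceq_{1}}=P_{\preceq_{2}}$. Since both orders are arithmetic-compatible, Proposition \ref{prop:CompatibilidadeComPositivos} puts each of them in exactly one of the alternatives $I_{0}\subseteq P_{\preceq_{i}}$ or $I_{0}\cap P_{\preceq_{i}}=\emptyset$; as they share $P$, they fall in the same alternative. If $I_{0}\cap P=\emptyset$, I would pass to the dual orders $\preceq_{1}^{*},\preceq_{2}^{*}$: these are again arithmetic-compatible, $\preceq_{2}^{*}$ still satisfies the WLT (duality preserves it), and by Proposition \ref{prop:CompatibilidadeComPositivos}.(ii) every $\delta\in I_{0}$ has $\delta\prec_{i}0$ (using $\delta=-\delta$), so $I_{0}\subseteq P_{\preceq_{i}^{*}}$. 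The point making this reduction legitimate is that in a \emph{total} order the negatives are the complement of the positives, $P_{\preceq_{i}^{*}}=\{\alpha:\alpha\prec_{i}0\}=\mathbb{T}\setminus(\{0\}\cup P_{\preceq_{i}})$, so $P_{\preceq_{1}}=P_{\preceq_{2}}$ immediately forces $P_{\preceq_{1}^{*}}=P_{\preceq_{2}^{*}}$. Hence the $I_{0}\subseteq P$ case, applied to the duals, would give $\preceq_{1}^{*}=\preceq_{2}^{*}$ and therefore $\preceq_{1}=\preceq_{2}$, and I may assume $I_{0}\subseteq P$ from now on.

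Next I fix $\alpha,\beta$ and split according to nullifying sets. If $N_{ull}(\alpha)=N_{ull}(\beta)$, then Theorem \ref{th:OrdemNosAnuladores}.(i) applies to both orders (both have $I_{0}\subseteq P$) and describes each by the identical criterion $\alpha\preceq_{i}\beta\Longleftrightarrow sup\,S_{upp}(\alpha)\le sup\,S_{upp}(\beta)$, so $\preceq_{1}$ and $\preceq_{2}$ agree on this pair. If instead $N_{ull}(\alpha)\neq N_{ull}(\beta)$, then $\alpha\neq\beta$ and $\beta-\alpha\notin N_{ull}(0)=I_{0}\cup\{0\}$. For the forward implication I would note that $\alpha\prec_{1}\beta$ gives $\beta-\alpha\succeq_{1}0$ by Corollary \ref{cor:PropriedadeNormaLTP}.(i), and since $\beta-\alpha\neq0$ this means $\beta-\alpha\in P$; then Proposition \ref{prop:PropriedadeReciproca}.(i) for $\preceq_{2}$ converts $0\prec_{2}\beta-\alpha$ back into $\alpha\prec_{2}\beta$. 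Thus $\alpha\prec_{1}\beta\Rightarrow\alpha\prec_{2}\beta$.

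For the reverse implication I would argue by contradiction. Suppose $\alpha\prec_{2}\beta$, i.e. $\beta-\alpha\in P$, but $\alpha\not\prec_{1}\beta$; by totality and $\alpha\neq\beta$ this yields $\beta\prec_{1}\alpha$, and the forward argument with $\alpha,\beta$ exchanged produces $\alpha-\beta\in P$ as well. Now both $\beta-\alpha$ and $\alpha-\beta=-(\beta-\alpha)$ lie in $P=P_{\preceq_{2}}$ while $\beta-\alpha\notin I_{0}\cup\{0\}$, which directly contradicts the WLT for $\preceq_{2}$: it forbids $0\prec_{2}\gamma$ and $0\prec_{2}-\gamma$ holding at once for $\gamma\notin I_{0}$. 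Hence $\alpha\prec_{2}\beta\Rightarrow\alpha\prec_{1}\beta$, and combining the two subcases the orders agree on every pair, so $\preceq_{1}=\preceq_{2}$.

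The main obstacle — and the only genuine use of the WLT of $\preceq_{2}$ — is exactly this final contradiction. Without trichotomy one could have both $\gamma$ and $-\gamma$ positive (precisely the ``shuffling'' of nullifying sets that the WLT rules out), and then $\preceq_{1}$ would be free to order $\alpha,\beta$ oppositely to $\preceq_{2}$ while keeping the same positives; this is what makes the WLT hypothesis on $\preceq_{2}$ indispensable. Everything else is bookkeeping: verifying that the duals inherit arithmetic-compatibility and the WLT, which is routine.
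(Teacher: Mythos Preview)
Your proof is correct and follows essentially the same approach as the paper: both arguments reduce to comparing $\beta-\alpha$ with $0$ via Corollary~\ref{cor:PropriedadeNormaLTP} and Proposition~\ref{prop:PropriedadeReciproca}, using Theorem~\ref{th:OrdemNosAnuladores} for the same-nullifying-set case and the WLT of $\preceq_{2}$ to rule out $\beta-\alpha,\alpha-\beta\in P$ simultaneously. The only organizational difference is that you reduce the case $I_{0}\cap P=\emptyset$ to the case $I_{0}\subseteq P$ by passing to duals, whereas the paper handles both cases in parallel by a single global contradiction argument.
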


\begin{proof}
Suppose $\preceq_{1},\preceq_{2}$ have the same set of positives,
$P_{\preceq_{1}}=P_{\preceq_{2}}$. In this case, either they both
have positive $0$-symmetric numbers or neither does. If $\preceq_{1}\neq\preceq_{2}$,
then there exist $\alpha,\beta\in\mathbb{T}$ such that $\alpha\preceq_{1}\beta$
but $\alpha\npreceq_{2}\beta$. By the totality of the order, we have
$\beta\prec_{2}\alpha\preceq_{1}\beta\prec_{2}\alpha$, and consequently,
by \emph{Theorem} \emph{\ref{th:OrdemNosAnuladores}}, $N_{ull}\left(\alpha\right)\neq N_{ull}\left(\beta\right)$.

If the orders have positive $0$-symmetric numbers, then sum compatibility
implies $\beta-\alpha\prec_{2}\alpha-\alpha\preceq_{1}\beta-\alpha$.
By transitivity, $\beta-\alpha\in P_{\preceq_{1}}$, but by \emph{Proposition}
\emph{\ref{prop:PropriedadeReciproca}.(ii)}, $\beta-\alpha\notin P_{\preceq_{2}}$.
Therefore $P_{\preceq_{1}}\neq P_{\preceq_{2}}$, contradicting the
initial hypothesis.

If neither order has positive $0$-symmetric numbers, then sum compatibility
implies $\alpha-\beta\preceq_{1}\beta-\beta\prec_{2}\alpha-\beta$.
By transitivity, $\alpha-\beta\notin P_{\preceq_{1}}$, since $\beta-\beta\prec0$.
By \emph{Proposition} \emph{\ref{prop:PropriedadeReciproca}}, $\alpha-\beta\in P_{\preceq_{2}}$.
Therefore, $P_{\preceq_{1}}\neq P_{\preceq_{2}}$, again contradicting
the initial hypothesis.
\end{proof}

\subsection{Fuzzy absolute value, fuzzy distance and intervals}

~

This section demonstrates that any arithmetic-compatible order $\preceq$
satisfying the Weak Law of Trichotomy (WLT) induces, up to duality,
a fuzzy absolute value. We conclude by writing certain open/closed
balls in terms of intervals.
\begin{thm}
\label{thm:LFTeValorAbsoluto} Let $\preceq$ be an order compatible
with arithmetic operations, satisfying the WLT and having positive
$0$-symmetric numbers. Then $\preceq$ induces a fuzzy absolute value,
$\left|\cdot\right|_{\preceq}$, that satisfying for arbitrary elements
of $\mathbb{T}$:\\
i) $\left|\alpha\right|_{\preceq}=\alpha$ if and only if $0\preceq\alpha$.
And $\left|\alpha\right|_{\preceq}=0$ if and only if $\alpha=0$;\\
ii) $\left|t\alpha\right|_{\preceq}=\left|t\right|\left|\alpha\right|_{\preceq}$
($t\in\mathbb{R}$), extending the real absolute value;\\
iii) $\left|\alpha+\beta\right|_{\preceq}\preceq\left|\alpha\right|_{\preceq}+\left|\beta\right|_{\preceq}$;\\
iv) $\left|\alpha-\gamma\right|_{\preceq}\preceq\left|\alpha-\beta\right|_{\preceq}+\left|\beta-\gamma\right|_{\preceq}$;\\
v) $\left|\left|\alpha\right|_{\preceq}-\left|\beta\right|_{\preceq}\right|_{\preceq}\preceq\left|\alpha-\beta\right|_{\preceq}$.
\end{thm}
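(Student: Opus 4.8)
The plan is to work throughout under the standing hypothesis $I_{0}\subseteq P_{\preceq}$, which by \emph{Proposition} \emph{\ref{prop:CompatibilidadeComPositivos}} is exactly what ``having positive $0$-symmetric numbers'' means. Recall that $\left|\alpha\right|_{\preceq}=\max_{\preceq}\left\{ \alpha,-\alpha\right\} $ is well defined because $\preceq$ is total, and that $\left|\alpha\right|_{\preceq}\in\left\{ \alpha,-\alpha\right\} $ while $\left|\alpha\right|_{\preceq}\succeq\alpha$ and $\left|\alpha\right|_{\preceq}\succeq-\alpha$. The first thing I would establish is the positivity $0\preceq\left|\alpha\right|_{\preceq}$, since it underlies everything else. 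If $\alpha\in N_{ull}\left(0\right)=I_{0}\cup\left\{ 0\right\} $ then $\alpha=-\alpha$, so $\left|\alpha\right|_{\preceq}=\alpha\in I_{0}\cup\left\{ 0\right\} $, which is $\succeq0$ because $I_{0}\subseteq P_{\preceq}$. Otherwise $\alpha\notin N_{ull}\left(0\right)$ and the WLT gives $0\prec\alpha$ or $0\prec-\alpha$; since $\left|\alpha\right|_{\preceq}$ dominates both $\alpha$ and $-\alpha$, it dominates whichever is positive, so $0\prec\left|\alpha\right|_{\preceq}$.

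For property \emph{(i)}, the equivalence $\left|\alpha\right|_{\preceq}=0$ if and only if $\alpha=0$ is immediate: since $\left|\alpha\right|_{\preceq}\in\left\{ \alpha,-\alpha\right\} $, $\left|\alpha\right|_{\preceq}=0$ forces $\alpha=0$ or $-\alpha=0$, and the converse is trivial. For $\left|\alpha\right|_{\preceq}=\alpha$ if and only if $0\preceq\alpha$, the forward direction is just positivity (from $\left|\alpha\right|_{\preceq}=\alpha$ and $0\preceq\left|\alpha\right|_{\preceq}$). For the converse I would split on whether $\alpha\in N_{ull}\left(0\right)$: if it is, then $\alpha=-\alpha$ and the claim is trivial; if not, then $N_{ull}\left(\alpha\right)\neq N_{ull}\left(0\right)$ and \emph{Proposition} \emph{\ref{prop:PropriedadeReciproca}.(i)} turns $0\prec\alpha$ into $-\alpha\prec0\prec\alpha$, whence $\max_{\preceq}\left\{ \alpha,-\alpha\right\} =\alpha$. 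This is the step that genuinely uses the WLT, and I expect it to be the main obstacle; everything else is ``soft''.

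For property \emph{(ii)} I would treat $t=0$ (trivial), $t>0$, and $t<0$ separately, noting that in all cases the set $\left\{ t\alpha,-\left(t\alpha\right)\right\} $ equals $\left\{ \left|t\right|\alpha,\,\left|t\right|\left(-\alpha\right)\right\} $, and that scalar multiplication by $\left|t\right|\geq0$ preserves $\preceq$ by \emph{Definition} \emph{\ref{def:CompatibilidadeOp.Aritm.}.(ii)}; hence the maximum commutes with multiplication by $\left|t\right|$, giving $\left|t\alpha\right|_{\preceq}=\left|t\right|\left|\alpha\right|_{\preceq}$. For property \emph{(iii)}, from $\alpha\preceq\left|\alpha\right|_{\preceq}$ and $\beta\preceq\left|\beta\right|_{\preceq}$ sum compatibility gives $\alpha+\beta\preceq\left|\alpha\right|_{\preceq}+\left|\beta\right|_{\preceq}$, and likewise $-\left(\alpha+\beta\right)=\left(-\alpha\right)+\left(-\beta\right)\preceq\left|\alpha\right|_{\preceq}+\left|\beta\right|_{\preceq}$; since both members of $\left\{ \alpha+\beta,-\left(\alpha+\beta\right)\right\} $ are bounded by $\left|\alpha\right|_{\preceq}+\left|\beta\right|_{\preceq}$, so is their maximum $\left|\alpha+\beta\right|_{\preceq}$. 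Property \emph{(iv)} is then immediate from \emph{(iii)} applied to the decomposition $\alpha-\gamma=\left(\alpha-\beta\right)+\left(\beta-\gamma\right)$.

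Finally, for property \emph{(v)} I would run the reverse-triangle argument: applying \emph{(iii)} to $\alpha=\left(\alpha-\beta\right)+\beta$ and cancelling $\left|\beta\right|_{\preceq}$ by sum compatibility yields $\left|\alpha\right|_{\preceq}-\left|\beta\right|_{\preceq}\preceq\left|\alpha-\beta\right|_{\preceq}$, and swapping $\alpha,\beta$ together with $\left|\beta-\alpha\right|_{\preceq}=\left|\alpha-\beta\right|_{\preceq}$ (from \emph{(ii)} with $t=-1$) yields $\left|\beta\right|_{\preceq}-\left|\alpha\right|_{\preceq}\preceq\left|\alpha-\beta\right|_{\preceq}$; both members of $\left\{ \left|\alpha\right|_{\preceq}-\left|\beta\right|_{\preceq},\,\left|\beta\right|_{\preceq}-\left|\alpha\right|_{\preceq}\right\} $ are bounded by $\left|\alpha-\beta\right|_{\preceq}$, so their maximum $\left|\left|\alpha\right|_{\preceq}-\left|\beta\right|_{\preceq}\right|_{\preceq}$ is as well. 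The recurring mechanism --- a $\preceq$-maximum of finitely many elements each bounded by some $\gamma$ is itself bounded by $\gamma$ --- is what makes \emph{(iii)}--\emph{(v)} routine once positivity and \emph{(i)} are in hand, so the only real difficulty should be the WLT-driven step in the converse of \emph{(i)}.
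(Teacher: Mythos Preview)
Your arguments for \emph{(i)}--\emph{(iii)} are correct and essentially match the paper's. The gap is in \emph{(iv)} and \emph{(v)}, where you invoke the identities $\alpha-\gamma=(\alpha-\beta)+(\beta-\gamma)$ and $\alpha=(\alpha-\beta)+\beta$. These are \emph{false} in $\mathbb{T}$: for $\beta=(b_{1},b,b_{2})$ one has $\beta-\beta=(b_{1}-b_{2},0,b_{2}-b_{1})\in I_{0}\cup\{0\}$, which vanishes only when $\beta\in\mathbb{R}$. In general $(\alpha-\beta)+(\beta-\gamma)=(\alpha-\gamma)+(\beta-\beta)\neq\alpha-\gamma$, and likewise $(\alpha-\beta)+\beta=\alpha+(\beta-\beta)\neq\alpha$. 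This failure of cancellation is precisely why the paper proves \emph{(iv)} by a six-case analysis on the relative $\preceq$-position of $\alpha,\beta,\gamma$ (using \emph{Corollary \ref{cor:PropriedadeNormaLTP}} and \emph{Proposition \ref{prop:PropriedadeReciproca}} to control the signs of the differences), and proves \emph{(v)} by a direct case split on $\left|\alpha\right|_{\preceq}\in\{\alpha,-\alpha\}$ and $\left|\beta\right|_{\preceq}\in\{\beta,-\beta\}$.

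Your route can in fact be repaired, but it needs an observation you did not make: if $\delta\in N_{ull}(0)$ then $\delta=-\delta\succeq0$, so for any $\mu\in\mathbb{T}$ both $\mu+\delta\succeq\mu$ and $-(\mu+\delta)=-\mu+\delta\succeq-\mu$, whence $\left|\mu+\delta\right|_{\preceq}\succeq\left|\mu\right|_{\preceq}$. Taking $\delta=\beta-\beta$ and $\mu=\alpha-\gamma$ gives $\left|(\alpha-\beta)+(\beta-\gamma)\right|_{\preceq}\succeq\left|\alpha-\gamma\right|_{\preceq}$, after which your appeal to \emph{(iii)} does finish \emph{(iv)}; the same device fixes \emph{(v)}. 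So the strategy is salvageable, but as written both steps rest on group-like cancellation that $\mathbb{T}$ does not enjoy.
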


\begin{proof}
\emph{(i) }If $\alpha\in N_{ull}\left(0\right)$, then $\alpha=-\alpha$,
and hence $\left|\alpha\right|_{\preceq}=\alpha\succeq0$. If $\alpha\notin N_{ull}\left(0\right)$
then either $0\prec\alpha$ or $0\prec-\alpha$, and only one of these
alternatives by WLT. If $0\prec\alpha$ then $\left|\alpha\right|_{\preceq}=\alpha$,
since $-\alpha\prec0$ by WLT. Similarly, if $0\prec-\alpha$, then
$\left|\alpha\right|_{\preceq}=-\alpha$.

\emph{(ii) }If $\alpha\in N_{ull}\left(0\right)$, then $t\alpha\in N_{ull}\left(0\right)$
for all $t\in\mathbb{R}$. In this case, $\left|t\alpha\right|_{\preceq}=t\alpha=-t\alpha=\left|t\right|\left|\alpha\right|_{\preceq}$.
Suppose $\alpha\notin N_{ull}\left(0\right)$. If $0\prec\alpha$
and $0<t$, then $0\prec t\alpha$, by scalar multiplication compatibility,
and $\left|t\alpha\right|_{\preceq}=t\alpha=\left|t\right|\left|\alpha\right|_{\preceq}$.
If $0\prec\alpha$ and $0>t$, then $0\prec-t\alpha$ by scalar multiplication
compatibility, and consequently $t\alpha\prec0$ by WLT. Therefore,
$\left|t\alpha\right|_{\preceq}=\left(-t\right)\alpha=\left|t\right|\left|\alpha\right|_{\preceq}$.
The case $\alpha\prec0$ is analogous.

\emph{(iii) }$\left|\alpha+\beta\right|_{\preceq}=\pm\left(\alpha+\beta\right)\preceq\left|\alpha\right|_{\preceq}+\left|\beta\right|_{\preceq}$,
by sum compatibility, provided $\pm\alpha\preceq\left|\alpha\right|_{\preceq}$
and $\pm\beta\preceq\left|\beta\right|_{\preceq}$.

\emph{(iv) }For the triangle inequality, consider any $\alpha,\beta,\gamma\in\mathbb{T}$.
We analyze all cases directly:

\emph{1)} If $\alpha\preceq\beta\preceq\gamma$: by \emph{Corollary}
\emph{\ref{cor:PropriedadeNormaLTP}}, $0\preceq\beta-\alpha$ and
$0\preceq\gamma-\beta$. By \emph{(i)},
\[
\left|\alpha-\beta\right|_{\preceq}+\left|\beta-\gamma\right|_{\preceq}=\beta-\alpha+\gamma-\beta=\gamma-\alpha+\beta-\beta\succeq\gamma-\alpha=\left|\alpha-\gamma\right|_{\preceq},
\]
 since $\beta-\beta\succeq0$ and the order is compatible with sum.

\emph{2)} If $\alpha\preceq\gamma\preceq\beta$: here $0\preceq\beta-\alpha$
and $0\preceq\beta-\gamma$. By \emph{(i)}, 
\[
\left|\alpha-\beta\right|_{\preceq}+\left|\beta-\gamma\right|_{\preceq}=\beta-\alpha+\beta-\gamma\succeq\beta-\alpha.
\]
 Since $\gamma\preceq\beta$, we have $\beta-\alpha\succeq\gamma-\alpha=\left|\alpha-\gamma\right|_{\preceq}$.

\emph{3)} If $\beta\preceq\alpha\preceq\gamma$: here $0\preceq\alpha-\beta$
and $0\preceq\gamma-\beta$. By \emph{(i)},
\[
\left|\alpha-\beta\right|_{\preceq}+\left|\beta-\gamma\right|_{\preceq}=\alpha-\beta+\gamma-\beta\succeq\gamma-\beta
\]
If $\alpha-\beta=0$ then $\gamma-\beta=\gamma-\alpha=\left|\alpha-\gamma\right|_{\preceq}$.
If $\alpha-\beta\succ0$, $\beta-\alpha\prec0$ or $\alpha\in N_{ull}\left(\beta\right)$
because of WLT. If $\beta-\alpha\prec0$, \emph{Proposition} \emph{\ref{prop:PropriedadeReciproca}.(i)}
implies that $-\alpha\preceq-\beta$, hence $\gamma-\beta\succeq\gamma-\alpha=\left|\alpha-\gamma\right|_{\preceq}$.
For contradiction, assume $\alpha\in N_{ull}\left(\beta\right)$ and
$\alpha-\beta+\gamma-\beta\prec\gamma-\alpha$. Then by sum compatibility
$\alpha-\beta+\gamma+\beta-\beta\prec\gamma+\beta-\alpha$. Since
$\alpha-\beta=\beta-\alpha$, we get $\gamma+\beta-\beta\prec\gamma$
- a contradiction as $\beta-\beta\succ0$. Then $\alpha-\beta+\gamma-\beta\succ\gamma-\alpha=\left|\alpha-\gamma\right|_{\preceq}$.

\emph{4) }If $\beta\preceq\gamma\preceq\alpha$: then $0\preceq\alpha-\beta$
and $0\preceq\gamma-\beta$. By \emph{(i)},
\[
\left|\alpha-\beta\right|_{\preceq}+\left|\beta-\gamma\right|_{\preceq}=\alpha-\beta+\gamma-\beta\succeq\alpha-\beta.
\]
 If $\beta\notin N_{ull}\left(\gamma\right)$, \emph{Proposition}
\emph{\ref{prop:PropriedadeReciproca}.(i)} gives $-\beta\succ-\gamma$,
so $\alpha-\beta\succeq\alpha-\gamma=\left|\alpha-\gamma\right|_{\preceq}$,
by sum compatibility. If $\beta\in N_{ull}\left(\gamma\right)$ and
$\alpha-\beta+\gamma-\beta\prec\alpha-\gamma$, then $\alpha-\beta+\gamma-\beta+\beta\prec\alpha-\gamma+\beta$.
Since $\gamma-\beta=\beta-\gamma$, we have $\alpha-\beta+\gamma-\beta+\beta\prec\alpha-\gamma+\beta$
and hence $\alpha+\beta-\beta\prec\alpha$ - a contradiction. Thus
$\alpha-\beta+\gamma-\beta\succeq\alpha-\gamma=\left|\alpha-\gamma\right|_{\preceq}$.

\emph{5) }If $\gamma\preceq\alpha\preceq\beta$: in this case $0\preceq\beta-\alpha$
and $0\preceq\beta-\gamma$. By \emph{(i)},
\[
\left|\alpha-\beta\right|_{\preceq}+\left|\beta-\gamma\right|_{\preceq}=\beta-\alpha+\beta-\gamma\succeq\beta-\gamma\succeq\alpha-\gamma=\left|\alpha-\gamma\right|_{\preceq}.
\]

\emph{6) }If $\gamma\preceq\beta\preceq\alpha$: in this case $0\preceq\alpha-\beta$
and $0\preceq\beta-\gamma$. By \emph{(i)},
\[
\left|\alpha-\beta\right|_{\preceq}+\left|\beta-\gamma\right|_{\preceq}=\alpha-\beta+\beta-\gamma\succeq\alpha-\gamma=\left|\alpha-\gamma\right|_{\preceq}.
\]

\emph{(v) }Without loss of generality, assume $\alpha\preceq\beta$.
Then $\left|\alpha-\beta\right|_{\preceq}=\beta-\alpha$, by \emph{(i)}.
If $\left|\alpha\right|_{\preceq}=\alpha$, then $\left|\beta\right|_{\preceq}=\beta$,
so $\left|\left|\alpha\right|_{\preceq}-\left|\beta\right|_{\preceq}\right|_{\preceq}=\left|\alpha-\beta\right|_{\preceq}$.
If $\left|\alpha\right|_{\preceq}=-\alpha$, then $\left|\left|\alpha\right|_{\preceq}-\left|\beta\right|_{\preceq}\right|_{\preceq}=\left|\left|\beta\right|_{\preceq}+\alpha\right|_{\preceq}$.
When $\left|\beta\right|_{\preceq}=-\beta$, we have $\left|\left|\beta\right|_{\preceq}+\alpha\right|_{\preceq}=\left|\alpha-\beta\right|_{\preceq}$.
If $\left|\beta\right|_{\preceq}=\beta$ then $\left|\left|\beta\right|_{\preceq}+\alpha\right|_{\preceq}=\left|\alpha+\beta\right|_{\preceq}\in\left\{ \beta+\alpha,-\alpha-\beta\right\} $.
In both cases, $\left|\left|\alpha\right|_{\preceq}-\left|\beta\right|_{\preceq}\right|_{\preceq}\preceq\beta-\alpha=\left|\alpha-\beta\right|_{\preceq}$.
\end{proof}
\begin{example}
The lexicographic orders $\leq_{ijk}$ (where $\left\{ i,j,k\right\} =\left\{ 1,2,3\right\} $)
fail to satisfy the weak law of trichotomy (WLT), as demonstrated
in \emph{Example} \emph{\ref{exa:Ordensijk}}. However, the specific
order $\leq_{231}$ induces a fuzzy absolute value $\left|\cdot\right|_{\le_{231}}$,
as will be shown in \emph{Example} \emph{\ref{exa:Ordem231InduzValorAbsolutoDifuso}}.
\end{example}

From now until the end of this section, $\preceq$ denotes an arithmetic-compatible
order having positive $0$-symmetric numbers and satisfying the WLT.
As a consequence of \emph{Theorem} \emph{\ref{thm:LFTeValorAbsoluto}},
$\preceq$ induces a fuzzy distance, $D_{\preceq}$, satisfying for
any triangular fuzzy numbers:\\
\emph{i)} $0\preceq D_{\preceq}\left(\alpha,\beta\right)$ and $0=D_{\preceq}\left(\alpha,\beta\right)$
if and only if $\alpha=\beta\in\mathbb{R}$; \\
\emph{ii)} $D_{\preceq}\left(\alpha,\beta\right)=D_{\preceq}\left(\beta,\alpha\right)$;\\
\emph{iii)} $D_{\preceq}\left(\alpha,\gamma\right)\preceq D_{\preceq}\left(\alpha,\beta\right)+D_{\preceq}\left(\beta,\gamma\right)$.

In \emph{(i)} we see a diffuse characteristic of distance: 
\begin{itemize}
\item $0=D_{\preceq}\left(\alpha,\beta\right)$ if and only if $\alpha=\beta\in\mathbb{R}$.
Furthermore, $D_{\preceq}\left(\alpha,\alpha\right)\in N_{ull}\left(0\right)$
and $D_{\preceq}\left(\alpha,\alpha\right)=0$ if and only if $\alpha\in\mathbb{R}$.
That is, the distance between a triangular fuzzy number and itself
is practically zero. It is zero only in the case where the number
is a real number.
\end{itemize}
We also highlight:
\begin{itemize}
\item Given $\alpha\in\mathbb{T}$, denote by $\alpha_{0}=min\left(N_{ull}\left(\alpha\right)\right)$,
guaranteed by \emph{Theorem} \emph{\ref{th:OrdemNosAnuladores}}.
Then $D_{\preceq}\left(\alpha,\alpha\right)\succeq D_{\preceq}\left(\alpha_{0},\alpha\right)$,
where equality holds only in the case where $\alpha=\alpha_{0}$.
This is because if $\alpha=\alpha_{0}+\delta$, with $\delta\in I_{0}$,
then $D_{\preceq}\left(\alpha,\alpha\right)=D_{\preceq}\left(\alpha_{0},\alpha\right)+\delta$.
In other words, $\alpha_{0}$ is the point with the shortest distance
to each point in $N_{ull}\left(\alpha\right)$.
\end{itemize}
From now on, we prefer to write $\left|\alpha-\beta\right|_{\preceq}$
instead of $D_{\preceq}\left(\alpha,\beta\right)$. We conclude this
section by characterizing certain open/closed balls through intervals.
For this, we need the following result: 
\begin{lem}
\label{lem:SolucoesEquacoesDiferencas} Given $\gamma=\left(c_{1},c,c_{2}\right)$
and $\beta=\left(b_{1},b,b_{2}\right)$ triangular fuzzy numbers,\\
i) the equation $\beta-\alpha=\gamma$ has a solution for some $\alpha\in\mathbb{T}$
if and only if $b-b_{1}\leq c-c_{1}$ (the lower margin of $\beta$
is bounded by that of $\gamma$) and $b_{2}-b\leq c_{2}-c$ (the upper
margin of $\beta$ is bounded by that of $\gamma$). In this case,
$\alpha=\left(b_{2}-c_{2},b-c,b_{1}-c_{1}\right)$ is the solution.\\
ii) the equation $\alpha-\beta=\gamma$ has a solution for some $\alpha\in\mathbb{T}$
if and only if $b-b_{1}\leq c_{2}-c$ and $b_{2}-b\leq c-c_{1}$(the
lower margin of $\beta$ is bounded by the upper margin of $\gamma$
and the upper margin of $\beta$ by the lower margin of $\gamma$).
In this case, $\alpha=\left(b_{2}+c_{1},b+c,b_{1}+c_{2}\right)$ is
the solution.
\end{lem}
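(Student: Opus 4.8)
The plan is to solve each equation by direct component-wise comparison, using the arithmetic of \emph{Definition \ref{def:OperacoesAritmeticas}}, and then to observe that the only nontrivial constraint is that the resulting triple must itself be admissible, i.e.\ lie in $\mathbb{T}$. Both parts reduce to the same bookkeeping, so I would treat (i) in detail and then mirror it for (ii).

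Write $\alpha=(a_{1},a,a_{2})$. The one subtlety to track is that scalar multiplication by $-1$ reverses the triple: by \emph{Definition \ref{def:OperacoesAritmeticas}(ii)}, $-\alpha=(-a_{2},-a,-a_{1})$. Hence
\[
\beta-\alpha=\beta+(-\alpha)=(b_{1}-a_{2},\,b-a,\,b_{2}-a_{1}).
\]
Setting this equal to $\gamma=(c_{1},c,c_{2})$ forces, component by component, $a_{2}=b_{1}-c_{1}$, $a=b-c$, and $a_{1}=b_{2}-c_{2}$; that is, $\alpha=(b_{2}-c_{2},\,b-c,\,b_{1}-c_{1})$ is the unique triple satisfying the three scalar equations. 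This already pins down the candidate solution; there is no freedom left.

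It then remains to characterize when this candidate actually lies in $\mathbb{T}$, i.e.\ when $a_{1}\le a\le a_{2}$. The inequality $a_{1}\le a$ reads $b_{2}-c_{2}\le b-c$, equivalently $b_{2}-b\le c_{2}-c$ (the upper-margin condition), while $a\le a_{2}$ reads $b-c\le b_{1}-c_{1}$, equivalently $b-b_{1}\le c-c_{1}$ (the lower-margin condition). Conversely, if both margin inequalities hold then the candidate triple is admissible and solves the equation, so these conditions are exactly the solvability criterion. Part (ii) is handled identically, starting from
\[
\alpha-\beta=\alpha+(-\beta)=(a_{1}-b_{2},\,a-b,\,a_{2}-b_{1}):
\]
matching with $\gamma$ yields the unique candidate $\alpha=(b_{2}+c_{1},\,b+c,\,b_{1}+c_{2})$, and imposing $a_{1}\le a\le a_{2}$ gives $b_{2}-b\le c-c_{1}$ and $b-b_{1}\le c_{2}-c$. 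The whole argument is routine; the only place to be careful is the endpoint reversal in the negation, since overlooking it would swap the upper- and lower-margin roles and produce the wrong inequalities.
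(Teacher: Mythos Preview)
Your proof is correct and follows essentially the same approach as the paper: compute the unique component-wise candidate for $\alpha$ using the arithmetic of triangular fuzzy numbers (including the endpoint reversal under negation), then check that the admissibility constraints $a_{1}\le a\le a_{2}$ are equivalent to the stated margin inequalities. Your write-up is simply more explicit than the paper's, which compresses both directions into two sentences and declares (ii) analogous.
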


\begin{proof}
Suppose $\alpha=\left(a_{1},a,a_{2}\right)$ solution of the equation
$\beta-\alpha=\gamma$. Then $a_{1}=b_{2}-c_{2}$, $a=b-c$ and $a_{2}=b_{1}-c_{1}$
with $a_{1}\leq a\leq a_{2}$. Therefore $b-b_{1}\leq c-c_{1}$ and
$b_{2}-b\leq c_{2}-c$. The converse is also direct since if $b-b_{1}\leq c-c_{1}$
and $b_{2}-b\leq c_{2}-c$, then $\alpha=\left(b_{2}-c_{2},b-c,b_{1}-c_{1}\right)$
is a triangular fuzzy number and satisfies the equation. Item \emph{(ii)}
is analogous.
\end{proof}
\begin{itemize}
\item Unlike the usual absolute value in real numbers, given $\beta,\gamma\in\mathbb{T}$
with $\gamma\succ0$ the equation $\left|\alpha-\beta\right|_{\preceq}=\gamma$
does not always admit a solution and, even when it does, it can satisfy
only one of the equations in the previous lemma. 
\end{itemize}
In the next result we write the balls centered at $\beta$ of radius
$\gamma$ in terms of intervals for the cases where $\beta,\gamma$
satisfy both conditions \emph{(i)} and \emph{(ii)} of \emph{Lemma}
\emph{\ref{lem:SolucoesEquacoesDiferencas}}. The remaining cases
are established in \emph{Theorem \ref{thm:BolasEIntervalos2}}.
\begin{thm}
\label{thm:BolaseIntervalos1} Given $\gamma=\left(c_{1},c,c_{2}\right)\succ0$
and $\beta=\left(b_{1},b,b_{2}\right)\in\mathbb{T}$ with $max\left\{ b-b_{1},b_{2}-b\right\} \leq min\left\{ c-c_{1},c_{2}-c\right\} $:\\
i) if $\gamma=\left(-k,0,k\right)\in I_{0}$, $k>0$, $\left|\alpha-\beta\right|_{\preceq}=\gamma$
has a unique solution $\alpha_{0}=\left(b_{2}-k,b,b_{1}+k\right)$
and $\overline{\mathcal{B}_{\preceq}}\left(\beta,\gamma\right)=\left[\widehat{\beta},\alpha_{0}\right]_{\preceq}$,
where $\widehat{\beta}=min\left(N_{ull}\left(\beta\right)\right)$.\\
ii) if $\gamma\notin I_{0}$, $\left|\alpha-\beta\right|_{\preceq}=\gamma$
has two solutions $\alpha_{1}=\left(b_{2}-c_{2},b-c,b_{1}-c_{1}\right)$
and $\alpha_{2}=\left(b_{2}+c_{1},b+c,b_{1}+c_{2}\right)$, with $\alpha_{1}\prec\beta\prec\alpha_{2}$.
In this case, 
\begin{align*}
\overline{\mathcal{B}_{\preceq}}\left(\beta,\gamma\right) & =\left(\left[\alpha_{1},\alpha_{2}\right]_{\preceq}\cup N_{ull}\left(\alpha_{1}\right)\right)\backslash\left(\alpha_{1}+I_{0}\right)\\
 & =\left[min\left(N_{ull}\left(\alpha_{1}\right)\right),\alpha_{2}\right]_{\preceq}\backslash\left(\alpha_{1}+I_{0}\right).
\end{align*}
\end{thm}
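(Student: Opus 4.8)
The plan is to unfold the definition of the ball into two cancellable inequalities and then read them off against the reference solutions $\alpha_{1},\alpha_{2}$. Since $\preceq$ is total, $|\alpha-\beta|_{\preceq}=\underset{\preceq}{max}\{\alpha-\beta,\beta-\alpha\}$, so $\alpha\in\overline{\mathcal{B}_{\preceq}}(\beta,\gamma)$ is equivalent to the conjunction $\alpha-\beta\preceq\gamma$ \emph{and} $\beta-\alpha\preceq\gamma$. I would first observe that the standing hypothesis $max\{b-b_{1},b_{2}-b\}\leq min\{c-c_{1},c_{2}-c\}$ is exactly the conjunction of the two solvability conditions of \emph{Lemma} \emph{\ref{lem:SolucoesEquacoesDiferencas}}, so both equations $\beta-\alpha=\gamma$ and $\alpha-\beta=\gamma$ admit the (unique) solutions $\alpha_{1}$ and $\alpha_{2}$ displayed in the statement.

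For the solution set of $|\alpha-\beta|_{\preceq}=\gamma$, I would argue that in a total order $\gamma=\underset{\preceq}{max}\{\alpha-\beta,\beta-\alpha\}$ forces $\gamma\in\{\alpha-\beta,\beta-\alpha\}$, that is, $\alpha-\beta=\gamma$ or $\beta-\alpha=\gamma$; conversely $\gamma\succ0$ together with the WLT gives $-\gamma\prec\gamma$, so both $\alpha_{2}$ and $\alpha_{1}$ really are solutions. In case (ii) one has $\gamma\neq-\gamma$, hence $\alpha_{1}\neq\alpha_{2}$; in case (i) one has $\gamma=-\gamma\in I_{0}$, so $\gamma\in\{\alpha-\beta,\beta-\alpha\}$ forces $\alpha-\beta=\gamma$ either way, and the two solutions of \emph{Lemma} \emph{\ref{lem:SolucoesEquacoesDiferencas}} collapse to the single point $\alpha_{0}=(b_{2}-k,b,b_{1}+k)$. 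The strict chain $\alpha_{1}\prec\beta\prec\alpha_{2}$ in case (ii) then follows from $\beta-\alpha_{1}=\gamma=\alpha_{2}-\beta\succ0$ and \emph{Proposition} \emph{\ref{prop:PropriedadeReciproca}.(i)}, which applies because $\gamma\notin I_{0}\cup\{0\}$ places $\alpha_{1},\beta,\alpha_{2}$ in pairwise distinct nullifying sets.

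To compute the ball itself I would rewrite the two inequalities with the cancellation law. Using $\gamma=\alpha_{2}-\beta$, the inequality $\alpha-\beta\preceq\gamma$ becomes $\alpha\preceq\alpha_{2}$; using $\gamma=\beta-\alpha_{1}$, the inequality $\beta-\alpha\preceq\gamma$ becomes $-\alpha\preceq-\alpha_{1}$. The heart of the proof is translating this last relation back to $\alpha$, where the map $\alpha\mapsto-\alpha$ behaves oppositely across and within nullifying sets. If $N_{ull}(\alpha)\neq N_{ull}(\alpha_{1})$, then \emph{Proposition} \emph{\ref{prop:PropriedadeReciproca}.(i)} reverses the order and gives $\alpha_{1}\prec\alpha$; if $N_{ull}(\alpha)=N_{ull}(\alpha_{1})$, then combining \emph{Theorem} \emph{\ref{th:OrdemNosAnuladores}.(i)} with $sup\,S_{upp}(-\alpha)=-inf\,S_{upp}(\alpha)$ and the constancy of $inf\,S_{upp}+sup\,S_{upp}$ on a nullifying set shows that negation \emph{preserves} the order, giving $\alpha\preceq\alpha_{1}$. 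I expect this dichotomy to be the main obstacle, since it is exactly the source of the excised set $\alpha_{1}+I_{0}$. Assembling the two cases: on $N_{ull}(\alpha_{1})$ the condition $\alpha\preceq\alpha_{2}$ is automatic (by \emph{Proposition} \emph{\ref{prop:PropriedadeReciproca}.(ii)}, $\alpha_{1}\prec\alpha_{2}$ forces every element of $N_{ull}(\alpha_{1})$ strictly below $\alpha_{2}$), so the ball meets $N_{ull}(\alpha_{1})$ in $\{\alpha\in N_{ull}(\alpha_{1}):\alpha\preceq\alpha_{1}\}=N_{ull}(\alpha_{1})\backslash(\alpha_{1}+I_{0})$, whereas off $N_{ull}(\alpha_{1})$ it equals $\{\alpha:\alpha_{1}\prec\alpha\preceq\alpha_{2}\}$; the union of these two pieces is $([\alpha_{1},\alpha_{2}]_{\preceq}\cup N_{ull}(\alpha_{1}))\backslash(\alpha_{1}+I_{0})$. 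For case (i) the same bookkeeping applies with $\alpha_{0}\in N_{ull}(\beta)$; there the off-fiber part is empty because $\alpha\preceq\alpha_{0}$ and $\alpha_{0}\prec\alpha$ cannot coexist, leaving $\overline{\mathcal{B}_{\preceq}}(\beta,\gamma)=\{\alpha\in N_{ull}(\beta):\alpha\preceq\alpha_{0}\}=[\widehat{\beta},\alpha_{0}]_{\preceq}$.

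Finally, for the second displayed equality I would use that $N_{ull}(\alpha_{1})$ is an interval (\emph{Corollary} \emph{\ref{cor:AnuladoresSaoIntervalos}}) with minimum $\widehat{\alpha_{1}}=min(N_{ull}(\alpha_{1}))$: any $\alpha$ with $\widehat{\alpha_{1}}\preceq\alpha\prec\alpha_{1}$ must lie in $N_{ull}(\alpha_{1})$, so $[\widehat{\alpha_{1}},\alpha_{2}]_{\preceq}=[\alpha_{1},\alpha_{2}]_{\preceq}\cup\{\alpha\in N_{ull}(\alpha_{1}):\alpha\prec\alpha_{1}\}$. Since property \emph{(3)} of nullifying sets together with \emph{Theorem} \emph{\ref{th:OrdemNosAnuladores}.(i)} identifies $\alpha_{1}+I_{0}=\{\alpha\in N_{ull}(\alpha_{1}):\alpha_{1}\prec\alpha\}$, one obtains $[\alpha_{1},\alpha_{2}]_{\preceq}\cup N_{ull}(\alpha_{1})=[\widehat{\alpha_{1}},\alpha_{2}]_{\preceq}\cup(\alpha_{1}+I_{0})$. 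Deleting $\alpha_{1}+I_{0}$ from both sides and invoking the elementary identity $(X\cup Y)\backslash Y=X\backslash Y$ collapses the right-hand side to $[\widehat{\alpha_{1}},\alpha_{2}]_{\preceq}\backslash(\alpha_{1}+I_{0})$, which is the second formula claimed.
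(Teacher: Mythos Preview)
Your argument is correct, and it is organized rather differently from the paper's own proof. The paper proceeds by a direct case split on the position of $\alpha$ relative to the landmarks $\widehat{\beta},\alpha_{0}$ (in case~(i)) or $\alpha_{1},\beta,\alpha_{2}$ and $N_{ull}(\alpha_{1})$ (in case~(ii)), checking in each sub-range whether $|\alpha-\beta|_{\preceq}\preceq\gamma$ holds by computing $\beta-\alpha$ or $\alpha-\beta$ explicitly and invoking \emph{Proposition~\ref{prop:PropriedadeReciproca}} as needed. You instead start from the single reformulation $|\alpha-\beta|_{\preceq}\preceq\gamma\Leftrightarrow\alpha-\beta\preceq\gamma\text{ and }\beta-\alpha\preceq\gamma$, substitute the exact identities $\gamma=\alpha_{2}-\beta=\beta-\alpha_{1}$, and cancel once to obtain $\alpha\preceq\alpha_{2}$ and $-\alpha\preceq-\alpha_{1}$; the entire content of the theorem is then compressed into the one dichotomy ``does negation reverse or preserve $\preceq$ between $\alpha$ and $\alpha_{1}$?'', resolved by \emph{Proposition~\ref{prop:PropriedadeReciproca}.(i)} off $N_{ull}(\alpha_{1})$ and by \emph{Theorem~\ref{th:OrdemNosAnuladores}.(i)} on it. This makes the source of the excised set $\alpha_{1}+I_{0}$ transparent and avoids the paper's six sub-cases, at the cost of leaning on the cancellation law (which the paper states but does not use in its proof here). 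Your handling of the second displayed equality via \emph{Corollary~\ref{cor:AnuladoresSaoIntervalos}} and property~(3) of nullifying sets is also cleaner than the paper's, which simply asserts that line without further justification.
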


\begin{proof}
\emph{Case} \emph{(i)}: by hypothesis, $max\left\{ b-b_{1},b_{2}-b\right\} \leq k$.
Both conditions of \emph{Lemma} \emph{\ref{lem:SolucoesEquacoesDiferencas}}
are satisfied. Since $\gamma\in I_{0}$, we have $\alpha-\beta=\beta-\alpha=\gamma=-\gamma$,
and therefore the same $\alpha$ satisfies both conditions. Consequently,
$\left|\alpha-\beta\right|_{\preceq}=\gamma$ has a unique solution
$\alpha_{0}=\left(b_{2}-k,b,b_{1}+k\right)\in N_{ull}\left(\beta\right)$.

Let $\widehat{\beta}=min\left(N_{ull}\left(\beta\right)\right)$.
If $\widehat{\beta}\preceq\alpha\preceq\alpha_{0}$, then $\alpha-\beta\in N_{ull}\left(0\right)$,
and hence $\left|\alpha-\beta\right|_{\preceq}=\alpha-\beta=\beta-\alpha$.
By sum compatibility, $\widehat{\beta}-\beta\preceq\alpha-\beta\preceq\alpha_{0}-\beta=\gamma$,
and thus $\alpha\in\overline{\mathcal{B}_{\preceq}}\left(\beta,\gamma\right)$.
For $\alpha\prec\widehat{\beta}$, we have $\alpha\prec\beta$ and
$\left|\alpha-\beta\right|_{\preceq}=\beta-\alpha\notin N_{ull}\left(0\right)$.
By \emph{Proposition \ref{prop:PropriedadeReciproca}.(ii)}, $\left|\alpha-\beta\right|_{\preceq}\succ\gamma$,
therefore $\alpha\notin\overline{\mathcal{B}_{\preceq}}\left(\beta,\gamma\right)$.
If $\alpha_{0}\prec\alpha$, sum compatibility implies $\gamma=\alpha_{0}-\beta\prec\alpha-\beta=\left|\alpha-\beta\right|_{\preceq}$.
Consequently, $\overline{\mathcal{B}_{\preceq}}\left(\beta,\gamma\right)=\left[\widehat{\beta},\alpha_{0}\right]_{\preceq}$.

\emph{Case (ii)}: if $\gamma\notin I_{0}$, \emph{Lemma} \emph{\ref{lem:SolucoesEquacoesDiferencas}}
guarantees that the equation $\left|\alpha-\beta\right|_{\preceq}=\gamma$
has two solutions $\alpha_{1}=\left(b_{2}-c_{2},b-c,b_{1}-c_{1}\right)$
and $\alpha_{2}=\left(b_{2}+c_{1},b+c,b_{1}+c_{2}\right)$, with $\alpha_{1}\prec\beta\prec\alpha_{2}$. 

If $\beta\preceq\alpha\prec\alpha_{2}$, sum compatibility implies
$\left|\alpha-\beta\right|_{\preceq}=\alpha-\beta\prec\alpha_{2}-\beta=\gamma$.
Hence $\alpha\in\overline{\mathcal{B}_{\preceq}}\left(\beta,\gamma\right)$.

If $\alpha_{1}\prec\alpha\prec\beta$ and $\alpha\notin N_{ull}\left(\alpha_{1}\right)$,
then $\left|\alpha-\beta\right|_{\preceq}=\beta-\alpha$, and by \emph{Proposition}
\emph{\ref{prop:PropriedadeReciproca}.(i),} $-\alpha\prec-min\left(N_{ull}\left(\alpha_{1}\right)\right)$.
By \emph{Observation} \emph{\ref{obs:Min(-alpha)}}, $-min\left(N_{ull}\left(\alpha_{1}\right)\right)=min\left(N_{ull}\left(-\alpha_{1}\right)\right)$,
and thus $-min\left(N_{ull}\left(\alpha_{1}\right)\right)\preceq-\alpha_{1}$.
Sum compatibility yields $\beta-\alpha\prec\beta-min\left(N_{ull}\left(\alpha_{1}\right)\right)\preceq\beta-\alpha_{1}=\gamma$.
Therefore, $\alpha\in\overline{\mathcal{B}_{\preceq}}\left(\beta,\gamma\right)$.

|Now suppose $\alpha\in N_{ull}\left(\alpha_{1}\right)$. If $min\left(N_{ull}\left(\alpha_{1}\right)\right)\preceq\alpha\preceq\alpha_{1}$,
then $-min\left(N_{ull}\left(\alpha_{1}\right)\right)\preceq-\alpha\preceq-\alpha_{1}$
by \emph{Theorem} \emph{\ref{th:OrdemNosAnuladores}} and \emph{Observation}
\ref{obs:ElementosMinimaisAnuladores}. Sum compatibility gives $\left|\alpha-\beta\right|_{\preceq}=\beta-\alpha\preceq\beta-\alpha_{1}=\gamma$,
hence $\alpha\in\overline{\mathcal{B}_{\preceq}}\left(\beta,\gamma\right)$.
If $\alpha_{1}\prec\alpha$, then $-\alpha_{1}\prec-\alpha$, and
sum compatibility implies $\gamma=\beta-\alpha_{1}\prec\beta-\alpha=\left|\alpha-\beta\right|_{\preceq}$.
In this case, $\alpha\notin\overline{\mathcal{B}_{\preceq}}\left(\beta,\gamma\right)$.

There remaining case is $\alpha\prec\alpha_{1}$ and $\alpha\notin N_{ull}\left(\alpha_{1}\right)$.
\emph{Proposition} \emph{\ref{prop:PropriedadeReciproca}.(i)} gives
$-\alpha_{1}\prec-\alpha$. Sum compatibility yields $\gamma=\beta-\alpha_{1}\prec\beta-\alpha=\left|\alpha-\beta\right|_{\preceq}$,
and consequently $\alpha\notin\overline{\mathcal{B}_{\preceq}}\left(\beta,\gamma\right)$.

Thus, in \emph{case (ii)} we have:
\begin{align*}
\overline{\mathcal{B}_{\preceq}}\left(\beta,\gamma\right) & =\left(\left[\alpha_{1},\alpha_{2}\right]_{\preceq}\cup N_{ull}\left(\alpha_{1}\right)\right)\backslash\left(\alpha_{1}+I_{0}\right)\\
 & =\left[min\left(N_{ull}\left(\alpha_{1}\right)\right),\alpha_{2}\right]_{\preceq}\backslash\left(\alpha_{1}+I_{0}\right).
\end{align*}
\end{proof}
Regarding open balls, in \emph{(i)} we have $\mathcal{B}_{\preceq}\left(\beta,\gamma\right)=\overline{\mathcal{B}_{\preceq}}\left(\beta,\gamma\right)\backslash\left\{ \alpha_{0}\right\} $
and in \emph{(ii)} we have $\mathcal{B}_{\preceq}\left(\beta,\gamma\right)=\overline{\mathcal{B}_{\preceq}}\left(\beta,\gamma\right)\backslash\left\{ \alpha_{1},\alpha_{2}\right\} $.
The previous result shows more fuzzy properties of the distance:
\begin{itemize}
\item In case \emph{(i)} above, if $max\left\{ b-b_{1},b_{2}-b\right\} >k$
then $\left|\alpha-\beta\right|_{\preceq}=\gamma$ has no solution
and $\overline{\mathcal{B}_{\preceq}}\left(\beta,\gamma\right)=\emptyset$.
\item Also in \emph{(i)}, if $b_{1}+k<b_{2}$ then $\beta\notin\overline{\mathcal{B}_{\preceq}}\left(\beta,\gamma\right)$. 
\end{itemize}

\section{\label{sec:MIN-MAX-compatibilidadeEOrdemSomaSuperior} Compatibility
with \emph{MIN-MAX} operators and the total sum order}

Here, we examine the implications of adding compatibility with \emph{MIN-MAX}
operators to the desired properties. Recall from \emph{Definition
\ref{def:OrdemRegular}} that an order compatible with both arithmetic
operations and\emph{ MIN-MAX} operators is called regular.

\emph{Theorem} \emph{\ref{thm:OrdensCompativeisSobreAsFibras}} characterizes
the regular orders that satisfy the WLT when restricted to the fibers
of the natural projection. Moreover, compatibility with \emph{MIN-MAX}
operators allows us to express---in terms of intervals---the open/closed
balls not covered by \emph{Theorem} \emph{\ref{thm:BolaseIntervalos1}},
as demonstrated in \emph{Theorem} \emph{\ref{thm:BolasEIntervalos2}}.

In \emph{Example} \ref{exa:Pre-ordemT}, we define the \emph{total
sum order} (\emph{\ref{eq:OrdemSomaTotal}}), which is regular, satisfies
the WLT, and has positive $0$-symmetric numbers.

Assume that $\preceq$ is an order relation compatible with \emph{MIN-MAX}
operators. As a consequence of \emph{Proposition \ref{prop:TFNcomparaveis}},
given a triangular fuzzy number $\alpha=\left(a_{1},a,a_{2}\right)$,
there exists:
\begin{itemize}
\item An unbounded vertical rectangular strip in $\pi^{-1}\left(a\right)$:
\[
\left\{ \left(x,a,y\right)\in\pi^{-1}\left(a\right)\,:\,\textrm{ \ensuremath{a_{1}\leq x\leq a} and \ensuremath{a_{2}\leq y} }\right\} 
\]
containing elements in $\pi^{-1}\left(a\right)$ that are greater
than or equal to $\alpha$ under the order $\preceq$; and
\item An unbounded horizontal rectangular strip in $\pi^{-1}\left(a\right)$:
\[
\left\{ \left(x,a,y\right)\in\pi^{-1}\left(a\right)\,:\,\textrm{ \ensuremath{x\leq a_{1}} and \ensuremath{a\leq y\leq a_{2}} }\right\} 
\]
containing elements in $\pi^{-1}\left(a\right)$ that are less than
or equal to $\alpha$.
\end{itemize}
If $\preceq$, in addition to being compatible with \emph{MIN-MAX}
operators, is also compatible with arithmetic operations, satisfies
WLT, and has positive $0$-symmetric elements, then:
\[
\left\{ \left(x,0,y\right)\in\pi^{-1}\left(0\right)\,:\,\textrm{ \ensuremath{x+y\geq0} e \ensuremath{0<y} }\right\} =\underset{0\leq t}{\bigcup}\left\{ \left(-t,0,t+k\right)\in\pi^{-1}\left(0\right)\,:\,\textrm{ \ensuremath{0\leq k} e \ensuremath{0<k+t} }\right\} \subset P_{\preceq}.
\]
 Since $\preceq$ extends the standard order on real numbers due to
its \emph{MIN-MAX} compatibility, for each $a>0$:
\[
a+\left[\left\{ \left(x,0,y\right)\in\pi^{-1}\left(0\right)\,:\,\textrm{ \ensuremath{x+y\geq0} e \ensuremath{0<y} }\right\} \cup\left\{ 0\right\} \right]=\left\{ \left(a_{1},a,a_{2}\right)\in\pi^{-1}\left(a\right)\,:\,\textrm{ \ensuremath{a_{1}+a_{2}\geq2a} }\right\} \subset P_{\preceq}.
\]

\begin{thm}
\label{thm:OrdensCompativeisSobreAsFibras} Let $\preceq$ be a regular
order satisfying WLT. Then for each $t\in\mathbb{R}$, the order $\preceq$
on each fiber $\pi^{-1}\left(t\right)$ is given by:\\
i) $\left(x_{1},t,y_{1}\right)\preceq\left(x_{2},t,y_{2}\right)\Longleftrightarrow\left\{ \begin{array}{l}
\textrm{\ensuremath{x_{1}+y_{1}<x_{2}+y_{2},} or }\\
\textrm{\ensuremath{x_{1}+y_{1}=x_{2}+y_{2}} and \ensuremath{y_{1}\leq y_{2}},}
\end{array}\right.$ if $I_{0}\subseteq P_{\preceq}$. Or \\
ii) $\left(x_{1},t,y_{1}\right)\preceq\left(x_{2},t,y_{2}\right)\Longleftrightarrow\left\{ \begin{array}{l}
\textrm{\ensuremath{x_{1}+y_{1}<x_{2}+y_{2},} or }\\
\textrm{\ensuremath{x_{1}+y_{1}=x_{2}+y_{2}} and \ensuremath{x_{1}\leq x_{2}},}
\end{array}\right.$ if $I_{0}\cap P_{\preceq}=\emptyset$. \\
\end{thm}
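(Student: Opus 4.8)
The plan is to compare $\alpha=(x_1,t,y_1)$ and $\beta=(x_2,t,y_2)$ on the fiber $\pi^{-1}(t)$ according to whether they lie in the same nullifying set. Since $N_{ull}(\alpha)=\{(x,t,y):x+y=x_1+y_1\}$, the two elements sit in the same nullifying set exactly when $x_1+y_1=x_2+y_2$ (the ``tie'' case) and in distinct ones when $x_1+y_1\neq x_2+y_2$. The tie case is settled directly by \emph{Theorem} \emph{\ref{th:OrdemNosAnuladores}}: here $sup\,S_{upp}(\alpha)=y_1$ and $sup\,S_{upp}(\beta)=y_2$, so when $I_0\subseteq P_{\preceq}$ the theorem gives $\alpha\preceq\beta\Leftrightarrow y_1\leq y_2$, and when $I_0\cap P_{\preceq}=\emptyset$ it gives $\alpha\preceq\beta\Leftrightarrow y_1\geq y_2\Leftrightarrow x_1\leq x_2$. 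These are precisely the second alternatives of (i) and (ii).

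The heart of the argument is the cross-nullifying-set comparison, where I would show that $x_1+y_1<x_2+y_2$ forces $\alpha\prec\beta$ in both cases at once. First I record a single positivity fact: for every $s>0$ one has $0\prec(0,0,s)$. Indeed $0=(0,0,0)\leq_{KY}(0,0,s)$ by \emph{Proposition} \emph{\ref{prop:TFNcomparaveis}}, so \emph{MIN-MAX} compatibility gives $0\preceq(0,0,s)$, and the two are distinct. Next set $\gamma=\beta-\alpha=(x_2-y_1,0,y_2-x_1)$; the inequalities $x_2\leq t\leq y_1$ and $x_1\leq t\leq y_2$ guarantee $\gamma\in\pi^{-1}(0)$, with margin sum $s=(x_2+y_2)-(x_1+y_1)>0$. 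Since $(0,0,s)\in N_{ull}(\gamma)$ while $N_{ull}(0)\neq N_{ull}(\gamma)$, \emph{Proposition} \emph{\ref{prop:PropriedadeReciproca}.(ii)} propagates $0\prec(0,0,s)$ across the whole nullifying set, yielding $0\prec\gamma$. Finally, because $N_{ull}(\alpha)\neq N_{ull}(\beta)$, \emph{Proposition} \emph{\ref{prop:PropriedadeReciproca}.(i)} converts $0\prec\beta-\alpha$ into $\alpha\prec\beta$.

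It remains to upgrade these implications to equivalences. The relation described on the right of (i) (resp. (ii)) is itself a total order on $\pi^{-1}(t)$, and the two steps above show $\preceq$ agrees with it in the direction ``right-hand side $\Rightarrow\alpha\preceq\beta$''. If $\alpha\preceq\beta$ held while the right-hand side failed, then the strict opposite condition would hold for the pair $(\beta,\alpha)$, and the same computations would give $\beta\prec\alpha$, contradicting antisymmetry; hence the converse holds and the characterization is an equivalence.

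I would flag the duality trap as the main obstacle. The earlier structural results (\emph{Theorem} \emph{\ref{th:OrdemNosAnuladores}}, \emph{Proposition} \emph{\ref{prop:PropriedadeReciproca}}) are proved for $I_0\subseteq P_{\preceq}$ and extended to $I_0\cap P_{\preceq}=\emptyset$ by passing to the dual $\preceq^{*}$, since arithmetic compatibility and the WLT are self-dual. \emph{MIN-MAX} compatibility, however, is \emph{not} self-dual, so case (ii) cannot simply be read off from case (i) by dualizing; this is exactly why I route all the positivity through the single fact $0\prec(0,0,s)$, whose derivation uses \emph{MIN-MAX} compatibility in a way that is insensitive to the case split, and then let \emph{Proposition} \emph{\ref{prop:PropriedadeReciproca}} transport it across the nullifying set. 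The only remaining checks are the routine verifications that $(0,0,s)$ indeed belongs to $N_{ull}(\gamma)$ and that $\gamma$ is a legitimate element of $\mathbb{T}$, both of which follow from the fiber constraints $x_i\leq t\leq y_i$.
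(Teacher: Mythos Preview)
Your proof is correct and rests on the same two ingredients as the paper's: a single invocation of \emph{MIN-MAX} compatibility to produce one strict inequality, followed by \emph{Proposition}~\emph{\ref{prop:PropriedadeReciproca}} to transport it across a nullifying set. The only difference is the auxiliary element chosen. The paper stays inside the fiber $\pi^{-1}(t)$: with $s=(x_2+y_2)-(x_1+y_1)>0$ it sets $\gamma=(x_1,t,y_1+s)$, notes $\alpha\leq_{KY}\gamma$ so $\alpha\prec\gamma$, observes $\gamma\in N_{ull}(\beta)$, and finishes with a single application of \emph{Proposition}~\emph{\ref{prop:PropriedadeReciproca}.(ii)}. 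You instead pass to $\pi^{-1}(0)$ via the difference $\beta-\alpha$, establish $0\prec(0,0,s)$, propagate to $0\prec\beta-\alpha$ via part~(ii), and then convert back with part~(i). Your route is one step longer but has the virtue of isolating the sole use of \emph{MIN-MAX} compatibility into the clean statement ``$0\prec(0,0,s)$ for $s>0$'', and your remark about the duality trap correctly explains why neither argument can simply dualize case~(i) to obtain case~(ii).
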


\begin{proof}
Take arbitrary $\alpha=\left(x_{1},t,y_{1}\right)$ and $\beta=\left(x_{2},t,y_{2}\right)$
in $\pi^{-1}\left(t\right)$ with $\left(x_{1},t,y_{1}\right)\preceq\left(x_{2},t,y_{2}\right)$.
Then $N_{ull}\left(\alpha\right)=N_{ull}\left(\beta\right)$ if and
only if $x_{1}+y_{1}=x_{2}+y_{2}$, and in this case \emph{Theorem
\ref{th:OrdemNosAnuladores}} states that $\alpha\preceq\beta$ if
and only if $y_{1}\leq y_{2}$.

Assume $x_{1}+y_{1}\neq x_{2}+y_{2}$ and without loss of generality
that $x_{1}+y_{1}<x_{2}+y_{2}$. There exists $s>0$ such that $x_{1}+y_{1}+s=x_{2}+y_{2}$.
Let $\gamma=\left(x_{1},t,y_{1}+s\right)$. By \emph{MIN-MAX} compatibility,
$\alpha\prec\gamma$. Since $\gamma\in N_{ull}\left(\beta\right)$,
\emph{Proposition} \emph{\ref{prop:PropriedadeReciproca}.(ii)} implies
$\alpha\prec\beta$.
\end{proof}
\begin{example}
\label{exa:OrdensPessimistaeOtimista} In \cite{FRM}, the authors
propose ranking methods for triangular fuzzy numbers considering decision-makers'
risk propensity. The pessimistic method (\emph{Section II} of \cite{FRM})
is designed for strongly risk-averse decision-makers, considering
only \textquotedbl worst-case outcomes\textquotedbl . \emph{Section
III} of \cite{FRM} proposes the optimistic method for strongly risk-seeking
decision-makers.

\emph{Equations} \emph{(2.2)-(2.4)} in \cite{FRM} show the pessimistic
method is equivalent to the total preorder:
\[
\left(a_{1},a,a_{2}\right)\preceq_{P}\left(b_{1},b,b_{2}\right)\Longleftrightarrow a_{1}+a\leq b_{1}+b.
\]
 It is straightforward to verify that this preorder is compatible
with arithmetic operations, with MIN-MAX operators, and satisfies
$\alpha\prec_{P}0$ for each $\alpha\in I_{0}$. Given $\alpha\in\mathbb{T}$,
\[
N_{ull}\left(\alpha\right)\nsubseteq\left\{ \beta\in\mathbb{T}\,:\,\beta\sim_{P}\alpha\right\} =\left\{ \beta\in\mathbb{T}\,:\,\textrm{\ensuremath{\beta\preceq_{P}\alpha} e \ensuremath{\beta\succeq_{P}\alpha}}\right\} .
\]
That is, this ranking method \textquotedbl shuffles\textquotedbl{}
the nullifying sets. Consequently, any extension of $\preceq_{P}$
to a regular total order on $\mathbb{T}$ cannot satisfy the weak
law of trichotomy, as evidenced by \emph{Proposition} \emph{\ref{prop:PropriedadeReciproca}.(ii)}.
Furthermore, no extension of $\preceq_{P}$ to a total order on $\mathbb{T}$
will induce a notion of fuzzy absolute value, provided $I_{0}\cap P_{\preceq_{P}}=\emptyset$. 

Among the numerous extensions of $\preceq_{P}$, we define the\emph{
pessimistic order} as follows:
\begin{equation}
\left(a_{1},a,a_{2}\right)\leq_{P}\left(b_{1},b,b_{2}\right)\,\Longleftrightarrow\,\left\{ \begin{array}{l}
\textrm{\ensuremath{a_{1}+a<b_{1}+b,} or }\\
\textrm{\ensuremath{a_{1}+a=b_{1}+b} and \ensuremath{a_{2}<b_{2}}; or }\\
\textrm{\ensuremath{a_{1}+a=b_{1}+b} and \ensuremath{a_{2}=b_{2}} and \ensuremath{a\leq b}.}
\end{array}\right.\label{eq:OrdemPessimista}
\end{equation}
Then $\leq_{P}$ is regular, with $I_{0}\cap P_{\leq_{P}}=\emptyset$.

Analogously, the optimistic method is equivalent to the following
total preorder: 
\[
\left(a_{1},a,a_{2}\right)\preceq_{O}\left(b_{1},b,b_{2}\right)\Longleftrightarrow a+a_{2}\leq b+b_{2}.
\]
We have that $\preceq_{O}$ is regular and possesses positive $0$-symmetric
elements. However, as in the pessimistic case, no extension to a regular
total order on $\mathbb{T}$ will satisfy the Weak Law of Trichotomy.
We define the \emph{optimistic order} as: 
\begin{equation}
\left(a_{1},a,a_{2}\right)\leq_{O}\left(b_{1},b,b_{2}\right)\,\Longleftrightarrow\,\left\{ \begin{array}{l}
\textrm{\ensuremath{a+a_{2}<b+b_{2},} or }\\
\textrm{\ensuremath{a+a_{2}=b+b_{2}} and \ensuremath{a_{1}<b_{1}}; or }\\
\textrm{\ensuremath{a+a_{2}=b+b_{2}} and \ensuremath{a_{1}=b_{1}} and \ensuremath{a\leq b}.}
\end{array}\right.\label{eq:OrdemOtmista}
\end{equation}
This is a regular order with positive $0$-symmetric elements that
does not satisfy the Weak Law of Trichotomy. The definitions of both
orders (\ref{eq:OrdemPessimista}) and (\ref{eq:OrdemOtmista}) are
based on the characterization of fibers $\pi^{-1}\left(t\right)$
(for $t\in\mathbb{R}$) of regular orders satisfying WLT, as given
in \emph{Theorem} \ref{thm:OrdensCompativeisSobreAsFibras}.
\end{example}

\begin{example}
\label{exa:Pre-ordemT} \emph{(Total Sum Order)} One of the first
works to directly propose a regular method for ranking $\mathbb{T}$
is given in Al-Amleh \cite{Al-Amleh}. In \cite{Al-Amleh}, more precisely
in \emph{Definition 3.2}, the following total preorder on $\mathbb{T}$
is considered:
\[
\left(a_{1},a,a_{2}\right)\preceq_{T}\left(b_{1},b,b_{2}\right)\,\Longleftrightarrow\,\textrm{\ensuremath{a_{1}+a+a_{2}\leq b_{1}+b+b_{2}.}}
\]
It is straightforward to verify that $\preceq_{T}$ is regular. Note
that, for every $\alpha\in\mathbb{T}$,
\[
N_{ull}\left(\alpha\right)\subset\left\{ \beta\in\mathbb{T}\,:\,\beta\sim_{T}\alpha\right\} .
\]
 Based on \emph{Theorem} \emph{\ref{thm:OrdensCompativeisSobreAsFibras}},
it is simple to extend $\preceq_{T}$ to a regular order on $\mathbb{T}$:

Among the various extensions of $\preceq_{T}$, we define the \emph{total
sum order} as
\begin{equation}
\left(a_{1},a,a_{2}\right)\leq_{T}\left(b_{1},b,b_{2}\right)\,\Longleftrightarrow\,\left\{ \begin{array}{l}
\textrm{\ensuremath{a_{1}+a+a_{2}<b_{1}+b+b_{2},} or }\\
\textrm{\ensuremath{a_{1}+a+a_{2}=b_{1}+b+b_{2}} and \ensuremath{a<b}; or }\\
\textrm{\ensuremath{a_{1}+a+a_{2}=b_{1}+b+b_{2}} and \ensuremath{a=b} and \ensuremath{a_{2}\leq b_{2}}.}
\end{array}\right.\label{eq:OrdemSomaTotal}
\end{equation}
The total sum order, $\leq_{T}$, is one of the main orders presented
in this work because it is regular, satisfies the WLT, and has positive
$0$-symmetric numbers.

Not every regular extension of $\leq_{T}$ satisfies the WLT, as shown
by the order
\begin{equation}
\left(a_{1},a,a_{2}\right)\leq'_{T}\left(b_{1},b,b_{2}\right)\,\Longleftrightarrow\,\left\{ \begin{array}{l}
\textrm{\ensuremath{a_{1}+a+a_{2}<b_{1}+b+b_{2},} or }\\
\textrm{\ensuremath{a_{1}+a+a_{2}=b_{1}+b+b_{2}} and \ensuremath{a_{2}<b_{2}}; or }\\
\textrm{\ensuremath{a_{1}+a+a_{2}=b_{1}+b+b_{2}} and \ensuremath{a_{2}=b_{2}} and \ensuremath{a\leq b}.}
\end{array}\right.\label{eq:OrdemT'}
\end{equation}
This order is regular and has positive $0$-symmetric numbers, but
it does not satisfy the WLT: if $\alpha=\left(-9,1,8\right)$, then
$0<'_{T}\alpha,-\alpha$, for example.
\end{example}

Compatibility with \emph{MIN-MAX} operators is what we need to complete
\emph{Theorem} \emph{\ref{thm:BolaseIntervalos1}} and characterize
open and closed balls through intervals for the uncovered cases. For
this purpose, we will use the following auxiliary result:
\begin{lem}
\label{lem:DesigualdadeComOsMinimais} Let $\preceq$ be a regular
order satisfying WLT and possessing positive $0$-symmetric elements.
Given triangular fuzzy numbers $\beta=\left(b_{1},b,b_{2}\right)$
and $\gamma=\left(c_{1},c,c_{2}\right)$ with $\gamma\succ0$ and
$\gamma\notin I_{0}$:\\
i) if $b-b_{1}>c-c_{1}$ or $b_{2}-b>c_{2}-c$, then $\left|min\left(N_{ull}\left(\beta-\gamma\right)\right)-\beta\right|_{\preceq}=\beta-min\left(N_{ull}\left(\beta-\gamma\right)\right)\succ\gamma$;\\
ii) if $b-b_{1}>c_{2}-c$ or $b_{2}-b>c-c_{1}$, then $\left|min\left(N_{ull}\left(\beta+\gamma\right)\right)-\beta\right|_{\preceq}=min\left(N_{ull}\left(\beta+\gamma\right)\right)-\beta\succ\gamma$.
\end{lem}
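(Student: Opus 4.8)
The plan is to prove both statements by the same three-move reduction, so I describe item (i) in detail and then indicate the changes for (ii). Throughout write $\mu=min\left(N_{ull}\left(\beta-\gamma\right)\right)$, which exists by \emph{Theorem \ref{th:OrdemNosAnuladores}}, and abbreviate the margins $L_{\beta}=b-b_{1}$, $U_{\beta}=b_{2}-b$, $L_{\gamma}=c-c_{1}$, $U_{\gamma}=c_{2}-c$, all $\geq0$. The hypothesis of (i) reads $L_{\beta}>L_{\gamma}$ or $U_{\beta}>U_{\gamma}$.

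First I would justify the stated equality $\left|\mu-\beta\right|_{\preceq}=\beta-\mu$. Since $\gamma\succ0$ and $\gamma\notin I_{0}\cup\left\{ 0\right\} $, the sets $N_{ull}\left(\beta-\gamma\right)$ and $N_{ull}\left(\beta\right)$ are distinct; hence \emph{Proposition \ref{prop:PropriedadeReciproca}.(i)} gives $\beta-\gamma\prec\beta$ (as $0\prec\gamma$), and \emph{Proposition \ref{prop:PropriedadeReciproca}.(ii)} propagates this to the whole nullifying set $N_{ull}\left(\beta-\gamma\right)$, so $\mu\prec\beta$. By WLT and \emph{Theorem \ref{thm:LFTeValorAbsoluto}.(i)} this forces $\left|\mu-\beta\right|_{\preceq}=\beta-\mu$. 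The second move is the key reduction: because $\mu\in N_{ull}\left(\beta-\gamma\right)$ one checks directly that $\left(\beta-\mu\right)-\gamma\in I_{0}\cup\left\{ 0\right\} $, i.e. $\beta-\mu$ and $\gamma$ lie in the same nullifying set $N_{ull}\left(\gamma\right)$. Consequently \emph{Theorem \ref{th:OrdemNosAnuladores}.(i)} turns the target $\beta-\mu\succ\gamma$ into the purely numerical statement $sup\,S_{upp}\left(\beta-\mu\right)>sup\,S_{upp}\left(\gamma\right)=c_{2}$.

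The third move computes that supremum. Applying \emph{Remark \ref{obs:ElementosMinimaisAnuladores}} to $\beta-\gamma=\left(b_{1}-c_{2},b-c,b_{2}-c_{1}\right)$ splits into two cases according to the sign of $\left(U_{\beta}-L_{\beta}\right)-\left(U_{\gamma}-L_{\gamma}\right)$, i.e. according to whether $U_{\beta}-U_{\gamma}\leq L_{\beta}-L_{\gamma}$ or the reverse. A short subtraction then gives $sup\,S_{upp}\left(\beta-\mu\right)=c_{2}+\left(L_{\beta}-L_{\gamma}\right)$ in the first case and $c_{2}+\left(U_{\beta}-U_{\gamma}\right)$ in the second, so the goal reduces to $L_{\beta}>L_{\gamma}$ and to $U_{\beta}>U_{\gamma}$ respectively. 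Here lies the only real obstacle: the hypothesis is a disjunction while each case needs one specific inequality. The resolution is that the case condition funnels the disjunction to the correct branch --- in the first case $U_{\beta}-U_{\gamma}\leq L_{\beta}-L_{\gamma}$, so if $U_{\beta}>U_{\gamma}$ then a fortiori $L_{\beta}>L_{\gamma}$, and either disjunct yields $L_{\beta}>L_{\gamma}$; the second case is symmetric. The borderline where the two margin differences coincide is covered by either formula. This proves (i).

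For (ii) I would repeat the scheme verbatim with $\beta+\gamma=\left(b_{1}+c_{1},b+c,b_{2}+c_{2}\right)$ and $\mu'=min\left(N_{ull}\left(\beta+\gamma\right)\right)$. Now $0\prec\gamma$ gives $\beta\prec\beta+\gamma$ and hence $\beta\prec\mu'$, so \emph{Theorem \ref{thm:LFTeValorAbsoluto}.(i)} yields $\left|\mu'-\beta\right|_{\preceq}=\mu'-\beta$, and again $\mu'-\beta\in N_{ull}\left(\gamma\right)$, reducing the goal to $sup\,S_{upp}\left(\mu'-\beta\right)>c_{2}$. The case split (now governed by the sign of $\left(U_{\beta}-L_{\beta}\right)+\left(U_{\gamma}-L_{\gamma}\right)$, i.e. by whether $U_{\beta}+U_{\gamma}\leq L_{\beta}+L_{\gamma}$ or not) produces $sup\,S_{upp}\left(\mu'-\beta\right)=c_{2}+\left(L_{\beta}-U_{\gamma}\right)$ and $c_{2}+\left(U_{\beta}-L_{\gamma}\right)$, so the goal becomes $L_{\beta}>U_{\gamma}$ and $U_{\beta}>L_{\gamma}$; the same funneling argument sends the disjunctive hypothesis $L_{\beta}>U_{\gamma}$ or $U_{\beta}>L_{\gamma}$ to whichever inequality the current case requires, finishing the proof.
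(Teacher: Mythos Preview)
Your proof is correct and follows essentially the same route as the paper's: both show $\mu\prec\beta$ (resp.\ $\beta\prec\mu'$) via the WLT and \emph{Proposition~\ref{prop:PropriedadeReciproca}}, compute $\beta-\mu$ using the two alternatives of \emph{Remark~\ref{obs:ElementosMinimaisAnuladores}}, and then funnel the disjunctive hypothesis through the case split to obtain the required strict inequality on the upper endpoint. The only cosmetic difference is that you explicitly observe $\beta-\mu\in N_{ull}(\gamma)$ and invoke \emph{Theorem~\ref{th:OrdemNosAnuladores}.(i)} for the final comparison, whereas the paper cites \emph{Theorem~\ref{thm:OrdensCompativeisSobreAsFibras}}; since $\beta-\mu$ and $\gamma$ share both fiber and nullifying set, the two citations amount to the same thing.
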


\begin{proof}
\emph{Case (i):} suppose $b-b_{1}>c-c_{1}$ or $b_{2}-b>c_{2}-c$.
Let $\alpha_{0}=min\left(N_{ull}\left(\beta-\gamma\right)\right)$.
Then, by \emph{Remark \ref{obs:ElementosMinimaisAnuladores}},
\[
\begin{array}{rcl}
\alpha_{0} & = & min\left\{ \left(x,b-c,y\right)\in\mathbb{T}\,:\,x+y=b_{1}+b_{2}-c_{1}-c_{2}\right\} \\
 & = & \left\{ \begin{array}{l}
\textrm{\ensuremath{\left(b_{1}+b_{2}-c_{1}-c_{2}+c-b,b-c,b-c\right),} if \ensuremath{b_{1}+b_{2}-c_{1}-c_{2}}\ensuremath{\leq2b-2c}; or }\\
\textrm{\ensuremath{\left(b-c,b-c,b_{1}+b_{2}-c_{1}-c_{2}+c-b\right),} if \ensuremath{b_{1}+b_{2}-c_{1}-c_{2}}\ensuremath{\geq2b-2c}. }
\end{array}\right.
\end{array}
\]

By WLT, $-\gamma\prec0$. By compatibility with sum, $\beta-\gamma\prec\beta$.
Since $\alpha_{0}=min\left(N_{ull}\left(\beta-\gamma\right)\right)\preceq\beta-\gamma$,
the transitivity of the order implies that $\alpha_{0}\prec\beta$.
As $\alpha_{0}\notin N_{ull}\left(\beta\right)$, \emph{Proposition
\ref{prop:PropriedadeReciproca}.(i)} implies that 
\[
\left|\alpha_{0}-\beta\right|_{\preceq}=\beta-\alpha_{0}=\left\{ \begin{array}{l}
\textrm{\ensuremath{\left(b_{1}+c-b,c,c_{1}+c_{2}+b-c-b_{1}\right),} if \ensuremath{b_{1}+b_{2}-c_{1}-c_{2}}\ensuremath{\leq2b-2c}; or }\\
\textrm{\ensuremath{\left(c_{1}+c_{2}+b-c-b_{2},c,b_{2}+c-b\right),} if \ensuremath{b_{1}+b_{2}-c_{1}-c_{2}}\ensuremath{\geq2b-2c}. }
\end{array}\right.
\]

If $\ensuremath{b_{1}+b_{2}-c_{1}-c_{2}}\ensuremath{\leq2b-2c}$,
then we must have $b-b_{1}>c-c_{1}$; otherwise, by hypothesis, $b_{2}-b>c_{2}-c$,
and thus $2b-2c<b_{1}-c_{1}+b{}_{2}-c_{2}$, which is a contradiction.
Therefore $b-b_{1}>c-c_{1}$, and in this case, $\left(c_{1}+c_{2}+b-c-b_{1}\right)+\left(b_{1}+c-b\right)=c_{1}+c_{2}$
and $c_{1}+c_{2}+b-c-b_{1}>c_{1}+c_{2}+b_{'1}-c_{1}-b_{1}=c_{2}$.
By \emph{Theorem \ref{thm:OrdensCompativeisSobreAsFibras}}, $\gamma\prec\beta-\alpha_{0}=\left|\alpha_{0}-\beta\right|_{\preceq}$.

If $\ensuremath{b_{1}+b_{2}-c_{1}-c_{2}}\ensuremath{\geq2b-2c}$,
then we must have $b_{2}-b>c_{2}-c$; otherwise, $b-b_{1}$ must be
greater than $c-c_{1}$ (by hypothesis), leading to $\ensuremath{b_{1}+b_{2}-c_{1}-c_{2}}\ensuremath{<2b-2c}$,
a contradiction. Hence, $b_{2}+c-b>c_{2}-c+c=c_{2}$. Again, by\emph{
Theorem \ref{thm:OrdensCompativeisSobreAsFibras}}, $\gamma\prec\beta-\alpha_{0}=\left|\alpha_{0}-\beta\right|_{\preceq}$.

Case \emph{(ii)} follows the same line of reasoning.
\end{proof}

\begin{thm}
\label{thm:BolasEIntervalos2} Let $\preceq$ be a regular order satisfying
the WLT and having positive $0$-symmetric numbers. Given $\beta=\left(b_{1},b,b_{2}\right),\gamma=\left(c_{1},c,c_{2}\right)\in\mathbb{T}$
with $\gamma\succ0$ and $\gamma\notin I_{0}$, then:\\
i) if $b-b_{1}>c-c_{1}$ or $b_{2}-b>c_{2}-c$, as well as $b-b_{1}>c_{2}-c$
or $b_{2}-b>c-c_{1}$, then 
\[
\overline{\mathcal{B}}\left(\beta,\gamma\right)=\mathcal{B}\left(\beta,\gamma\right)=\left(\alpha_{1},\,\alpha_{2}\right)_{\preceq}\backslash N_{ull}\left(\alpha_{1}\right),
\]
where $\alpha_{1}=min\left(N_{ull}\left(\beta-\gamma\right)\right)$
and $\alpha_{2}=min\left(N_{ull}\left(\beta+\gamma\right)\right)$.\\
ii) if $b-b_{1}>c-c_{1}$ or $b_{2}-b>c_{2}-c$, but $b-b_{1}\leq c_{2}-c$
and $b_{2}-b\leq c-c_{1}$, then 
\[
\overline{\mathcal{B}}\left(\beta,\gamma\right)=\left[\alpha_{1},\,\alpha_{2}\right]_{\preceq}\backslash N_{ull}\left(\alpha_{1}\right),
\]
where $\alpha_{1}=min\left(N_{ull}\left(\beta-\gamma\right)\right)$
and $\alpha_{2}=\left(b_{2}+c_{1},b+c,b_{1}+c_{2}\right)$.\\
iii) if $b-b_{1}\leq c-c_{1}$ and $b_{2}-b\leq c_{2}-c$, but $b-b_{1}>c_{2}-c$
or $b_{2}-b>c-c_{1}$, then 
\[
\overline{\mathcal{B}}\left(\beta,\gamma\right)=\left[\alpha_{1},\,\alpha_{2}\right)_{\preceq}\backslash\left(\alpha_{1}+I_{0}\right),
\]
where $\alpha_{1}=\left(b_{2}-c_{2},b-c,b_{1}-c_{1}\right)$ and $\alpha_{2}=min\left(N_{ull}\left(\beta+\gamma\right)\right)$. 
\end{thm}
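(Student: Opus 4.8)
The plan is to proceed exactly as in the proof of Theorem~\ref{thm:BolaseIntervalos1}: for an arbitrary $\alpha$ I split according to whether $\alpha\succeq\beta$ or $\alpha\prec\beta$, use Theorem~\ref{thm:LFTeValorAbsoluto}.(i) to write $\left|\alpha-\beta\right|_{\preceq}=\alpha-\beta$ in the first case and $\left|\alpha-\beta\right|_{\preceq}=\beta-\alpha$ in the second, and then decide membership in the ball by testing $\left|\alpha-\beta\right|_{\preceq}\preceq\gamma$. The three cases of the statement correspond precisely to which of the two equations of Lemma~\ref{lem:SolucoesEquacoesDiferencas} fails to be solvable: the equation $\alpha-\beta=\gamma$ governs the \emph{upper} end of the ball and is solvable iff $b-b_{1}\le c_{2}-c$ and $b_{2}-b\le c-c_{1}$, while $\beta-\alpha=\gamma$ governs the \emph{lower} end and is solvable iff $b-b_{1}\le c-c_{1}$ and $b_{2}-b\le c_{2}-c$. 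Thus each of the two ends is treated by one of two routines, and the three cases simply combine these routines in the three ways leaving at least one equation unsolvable (the all-solvable combination being Theorem~\ref{thm:BolaseIntervalos1}).

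For the upper end, when $\alpha-\beta=\gamma$ is solvable the solution is $\alpha_{2}^{+}=\left(b_{2}+c_{1},b+c,b_{1}+c_{2}\right)$, and since $\alpha_{2}^{+}-\beta=\gamma$ the cancellation law gives, for $\alpha\succeq\beta$, that $\alpha-\beta\preceq\gamma\Leftrightarrow\alpha\preceq\alpha_{2}^{+}$; hence the upper endpoint is \emph{attained} (closed) at $\alpha_{2}^{+}$. When it is not solvable I set $\alpha_{2}=\min\left(N_{ull}\left(\beta+\gamma\right)\right)$. First $\beta\prec\alpha_{2}$ follows from $\gamma\succ0$ together with Proposition~\ref{prop:PropriedadeReciproca}.(ii), since $\gamma\notin N_{ull}\left(0\right)$ forces $\beta$ and $\beta+\gamma$ into distinct nullifying sets. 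For $\beta\preceq\alpha\prec\alpha_{2}$ the element $\alpha$ lies strictly below the block $N_{ull}\left(\beta+\gamma\right)$, so $\alpha\prec\beta+\gamma$ with $\alpha\notin N_{ull}\left(\beta+\gamma\right)$, and Proposition~\ref{prop:PropriedadeReciproca}.(i) yields $0\prec\left(\beta+\gamma\right)-\alpha$, i.e. $\alpha-\beta\prec\gamma$ (so $\alpha$ lies in the open ball). For $\alpha\succeq\alpha_{2}$, Lemma~\ref{lem:DesigualdadeComOsMinimais}.(ii) gives $\alpha_{2}-\beta\succ\gamma$, which sum compatibility propagates upward, excluding $\alpha$; the endpoint $\alpha_{2}$ is therefore \emph{open}.

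For the lower end, which is the delicate part, the analysis is symmetric. When $\beta-\alpha=\gamma$ is solvable the solution is $\alpha_{1}=\left(b_{2}-c_{2},b-c,b_{1}-c_{1}\right)$ and the treatment copies the lower portion of Theorem~\ref{thm:BolaseIntervalos1}.(ii): for $\alpha\in N_{ull}\left(\alpha_{1}\right)$, Theorem~\ref{th:OrdemNosAnuladores}.(i) (the ordering of a nullifying set by $sup\,S_{upp}$) shows $\alpha\in\overline{\mathcal{B}}\Leftrightarrow\alpha\preceq\alpha_{1}$, so exactly $\alpha_{1}+I_{0}$ is removed, the attained bottom being $\min\left(N_{ull}\left(\alpha_{1}\right)\right)$; for $\alpha\notin N_{ull}\left(\alpha_{1}\right)$ with $\alpha\prec\beta$, Proposition~\ref{prop:PropriedadeReciproca} shows $\alpha\in\overline{\mathcal{B}}$ iff $\alpha$ lies above the whole block $N_{ull}\left(\alpha_{1}\right)$. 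When $\beta-\alpha=\gamma$ is not solvable I set $\alpha_{1}=\min\left(N_{ull}\left(\beta-\gamma\right)\right)$; Lemma~\ref{lem:DesigualdadeComOsMinimais}.(i) now gives $\beta-\alpha_{1}\succ\gamma$, so the \emph{entire} block $N_{ull}\left(\alpha_{1}\right)$ is outside the ball (open lower end), while each $\alpha$ strictly above that block and below $\beta$ lies in the open ball by Proposition~\ref{prop:PropriedadeReciproca}.(i). Assembling the two ends yields the three stated intervals: both ends open with the bottom block deleted in case~(i), whence $\overline{\mathcal{B}}=\mathcal{B}$ because no element realizes $\left|\alpha-\beta\right|_{\preceq}=\gamma$; closed top and deleted bottom block in case~(ii); and open top with attained bottom, only $\alpha_{1}+I_{0}$ deleted, in case~(iii).

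The main obstacle is not any single inequality but the bookkeeping. The conceptual core is already supplied by Lemma~\ref{lem:DesigualdadeComOsMinimais}, which is exactly the tool certifying that in the non-solvable cases the candidate endpoint $\min\left(N_{ull}\left(\beta\pm\gamma\right)\right)$ overshoots $\gamma$, making that end open and excluding its whole block. What takes care is translating the two local facts ``inside the block, membership $\Leftrightarrow\,\alpha\preceq\alpha_{1}$'' (from Theorem~\ref{th:OrdemNosAnuladores}) and ``outside the block, membership $\Leftrightarrow$ strictly above the block'' (from Proposition~\ref{prop:PropriedadeReciproca}.(ii)) into interval notation with the correct open/closed endpoints and the correct deleted set ($N_{ull}\left(\alpha_{1}\right)$ in cases~(i)--(ii) versus $\alpha_{1}+I_{0}$ in case~(iii)); in particular one must read the lower endpoint in case~(iii) as $\min\left(N_{ull}\left(\alpha_{1}\right)\right)$, the attained bottom of the ball, in agreement with Theorem~\ref{thm:BolaseIntervalos1}.(ii), rather than as the solution point $\alpha_{1}$ alone.
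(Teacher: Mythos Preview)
Your proposal is correct and follows essentially the same approach as the paper: case~(i) is handled in full by positioning $\alpha$ relative to $\alpha_{1}$, $\beta$, $\alpha_{2}$, invoking Lemma~\ref{lem:DesigualdadeComOsMinimais} at the endpoints and Proposition~\ref{prop:PropriedadeReciproca} for the interior, and cases~(ii) and~(iii) are then dispatched by combining one half of this argument with one half of Theorem~\ref{thm:BolaseIntervalos1}.(ii). Your explicit decomposition into independent upper-end and lower-end routines (each with a solvable/unsolvable branch governed by Lemma~\ref{lem:SolucoesEquacoesDiferencas}) makes the trichotomy more transparent than the paper's terse treatment of~(ii) and~(iii), and your closing remark about reading the lower endpoint in case~(iii) as $\min\left(N_{ull}\left(\alpha_{1}\right)\right)$ is well taken.
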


\begin{proof}
\emph{Case (i):} \emph{Lemma \ref{lem:DesigualdadeComOsMinimais}
}implies that $\alpha_{1},\alpha_{2}\notin\overline{\mathcal{B}}\left(\beta,\gamma\right)$.
Since $\gamma\succ0$ and $\gamma\notin I_{0}$, we have $-\gamma\prec0\prec\gamma$,
and by compatibility with sum, $\beta-\gamma\prec\beta\prec\beta+\gamma$.
By transitivity, $\alpha_{1}\prec\beta$. By \emph{Proposition \ref{prop:PropriedadeReciproca}.(ii),}
$\beta\prec\alpha_{2}$. 

If $\beta\preceq\alpha\prec\alpha_{2}$, then 
\[
\left|\alpha-\beta\right|_{\preceq}=\alpha-\beta\prec\alpha_{2}-\beta\preceq\gamma+\beta-\beta.
\]
Since $\alpha\notin N_{ull}\left(\gamma+\beta\right)$, $\alpha-\beta\notin N_{ull}\left(\gamma\right)$,
and by \emph{Proposition \ref{prop:PropriedadeReciproca}.(ii)}, we
conclude\emph{ $\alpha-\beta\prec\gamma$}. Thus, $\alpha\in\overline{\mathcal{B}}\left(\beta,\gamma\right)$. 

If $\alpha_{1}\prec\alpha\prec\beta$ and $\alpha\notin N_{ull}\left(\alpha_{1}\right)$,
then 
\[
\left|\alpha-\beta\right|_{\preceq}=\beta-\alpha\prec\beta-\alpha_{1}\prec\beta-\left(\beta-\gamma\right),
\]
 since $-\alpha\prec-\alpha_{1}$ by \emph{Proposition \ref{prop:PropriedadeReciproca}.(i)}.
As $\beta-\alpha\notin N_{ull}\left(\gamma\right)$, \emph{Proposition
\ref{prop:PropriedadeReciproca}.(ii) }implies $\beta-\alpha\prec\gamma$.
Hence, $\alpha\in\overline{\mathcal{B}}\left(\beta,\gamma\right)$.

If $\alpha\in N_{ull}\left(\alpha_{1}\right)$, then there exists
$\delta\in N_{ull}\left(0\right)$ such that $\alpha=\alpha_{1}+\delta$.
In this case, 
\[
\left|\alpha-\beta\right|_{\preceq}=\beta-\alpha=\beta-\alpha_{1}+\delta\succ\gamma+\delta,
\]
 since $\beta-\alpha_{1}\succ\gamma$ \emph{by Lemma \ref{lem:DesigualdadeComOsMinimais}}.
Thus, $N_{ull}\left(\alpha_{1}\right)\cap\overline{\mathcal{B}}\left(\beta,\gamma\right)=\emptyset$. 

For $\alpha\prec\alpha_{1}$ or $\alpha\succ\alpha_{2}$: 

$\bullet$ if $\alpha\succ\alpha_{2}$, then $\left|\alpha-\beta\right|_{\preceq}=\alpha-\beta\succ\alpha_{2}-\beta\succ\gamma$. 

$\bullet$ if $\alpha\prec\alpha_{1}$, then $\alpha\notin N_{ull}\left(\alpha_{1}\right)$,
and $-\alpha_{1}\prec-\alpha$ by \emph{Proposition \ref{prop:PropriedadeReciproca}.(i)}.
As consequence of \emph{Lemma \ref{lem:DesigualdadeComOsMinimais}},
$\left|\alpha-\beta\right|_{\preceq}=\beta-\alpha\succ\beta-\alpha_{1}\succ\gamma$.

Therefore,
\[
\overline{\mathcal{B}}\left(\beta,\gamma\right)=\mathcal{B}\left(\beta,\gamma\right)=\left(\alpha_{1},\,\alpha_{2}\right)_{\preceq}\backslash N_{ull}\left(\alpha_{1}\right).
\]

\emph{Case (ii)}: For $\beta\preceq\alpha\preceq\alpha_{2}$, we apply
the arguments established in \emph{Theorem \ref{thm:BolaseIntervalos1}.(ii)}.
For $\alpha_{1}\preceq\alpha\prec\beta$, we employ the methodology
developed in \emph{Case (i)} above. \emph{(iii)} follows through inversion
of these approaches.
\end{proof}

\section{\label{sec:CompatibilidadeProjNatural} Compatibility with natural
projection and the upper sum order}

If the set of triangular fuzzy numbers is equipped with a regular
order satisfying the weak law of trichotomy, \emph{Theorem} \emph{\ref{thm:OrdensCompativeisSobreAsFibras}}
characterizes the only two possible ways to rank each fiber $\pi^{-1}\left(t\right)$,
$t\in\mathbb{R}$. To completely determine this order on $\mathbb{T}$,
we then need to understand how the order compares elements from different
fibers.

The simplest way to \textquotedbl accommodate\textquotedbl{} the
fibers of the natural projection in $\mathbb{T}$ is to require that
the order does not \textquotedbl mix\textquotedbl{} distinct fibers.
In this case, the order is compatible with the natural projection
(\emph{Definition} \emph{\ref{def:CompProjNatural}}). This is consistent
with the notion that a triangular fuzzy number $\alpha=\left(a_{1},a,a_{2}\right)\in\mathbb{T}$
is a special case of a fuzzy quantity that aims to abstract the idea
of a \textquotedbl number close to $a$\textquotedbl , where $a$
is the \textquotedbl expected/likely value\textquotedbl{} and $a_{2}-a$,
$a-a_{1}$ represent the \textquotedbl upper and lower uncertainty
margins\textquotedbl , respectively. Ranking methods compatible with
the natural projection first compare the most probable values of each
triangular fuzzy number and only use the uncertainty margins to decide
in case of equality.
\begin{lem}
\label{lem:CaracOrdensPreservadasPelaProjNatural} An order $\preceq$
is compatible with the natural projection if and only if
\[
\left(x_{1},a,y_{1}\right)\preceq\left(x_{2},b,y_{2}\right)\Longleftrightarrow\begin{cases}
\textrm{\ensuremath{a<b,} or }\\
\textrm{\ensuremath{a=b} and \ensuremath{\left(x_{1},y_{1}\right)\leq_{a}\left(x_{2},y_{2}\right)},} & \textrm{}
\end{cases}
\]
for some order $\leq_{a}$ on $\left(-\infty,a\right]\times\left[a,\infty\right)\subseteq\mathbb{R}^{2}$.
\end{lem}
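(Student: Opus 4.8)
The plan is to prove the two implications, with the forward direction forcing $\leq_{a}$ to be the fiberwise restriction of $\preceq$, and the backward direction being a routine check that a lexicographic construction produces a compatible order.

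For the forward implication, assume $\preceq$ is compatible with $\pi$. The fiber $\pi^{-1}(a)$ is in bijection with $(-\infty,a]\times[a,\infty)$ via $(x,a,y)\leftrightarrow(x,y)$ (indeed, $x\leq a\leq y$ is precisely $x\leq a$ and $y\geq a$), and I would transport $\preceq$ through this bijection, setting $(x_{1},y_{1})\leq_{a}(x_{2},y_{2})$ if and only if $(x_{1},a,y_{1})\preceq(x_{2},a,y_{2})$. Since $\preceq$ is a total order on $\mathbb{T}$, its restriction to the subset $\pi^{-1}(a)$ is again a total order, so each $\leq_{a}$ is an order. It then remains to verify the displayed equivalence by comparing the middle coordinates. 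If $a<b$, then $(x_{1},a,y_{1})\prec_{\pi}(x_{2},b,y_{2})$, and compatibility gives $(x_{1},a,y_{1})\prec(x_{2},b,y_{2})$, so both sides hold. If $a>b$, the same argument applied to the reversed pair gives $(x_{2},b,y_{2})\prec(x_{1},a,y_{1})$, whence $(x_{1},a,y_{1})\npreceq(x_{2},b,y_{2})$ and both sides fail. If $a=b$, the two triples lie in the same fiber and the equivalence is exactly the definition of $\leq_{a}$.

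For the backward implication, suppose that for each $a$ we are given an order $\leq_{a}$ on $(-\infty,a]\times[a,\infty)$, and let $\preceq$ be the relation defined by the displayed formula. First I would confirm that $\preceq$ is a total order: totality and transitivity are the standard properties of the lexicographic combination (compare middle coordinates under $\leq$, then break ties with $\leq_{a}$), while antisymmetry follows because $\alpha\preceq\beta$ and $\beta\preceq\alpha$ force equal middle coordinates, after which the antisymmetry of $\leq_{a}$ yields $\alpha=\beta$. To see that $\preceq$ extends $\preceq_{\pi}$, observe that $\alpha\prec_{\pi}\beta$ means $\pi(\alpha)<\pi(\beta)$; the first clause of the formula then gives $\alpha\preceq\beta$, while the strict inequality blocks $\beta\preceq\alpha$, so $\alpha\prec\beta$, as required.

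Since the whole argument is essentially bookkeeping, I do not anticipate a genuine obstacle. The one point needing a little care is the antisymmetry and transitivity check in the backward direction: it relies on the fibers of $\pi$ being pairwise disjoint, so that two triples with equal projections are compared only through a single $\leq_{a}$, together with each $\leq_{a}$ being a genuine order on its planar region. Making the identification $\pi^{-1}(a)\cong(-\infty,a]\times[a,\infty)$ fully explicit is the only other place where attention is warranted.
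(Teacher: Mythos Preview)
Your proof is correct and follows essentially the same approach as the paper: define $\leq_{a}$ as the pullback of $\preceq$ along the bijection $\pi^{-1}(a)\cong(-\infty,a]\times[a,\infty)$, then verify the displayed equivalence by comparing the middle coordinates. Your treatment of the converse is actually more thorough than the paper's, which simply declares it trivial.
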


\begin{proof}
Suppose that $\preceq$ is compatible with the natural projection.
Take $\alpha=\left(x_{1},a,y_{1}\right)$ and $\beta=\left(x_{2},b,y_{2}\right)$.
If $b>a$, then $\alpha\preceq\beta$ and $\alpha\neq\beta$. Hence,
$\alpha\prec\beta$.

For a fixed $a\in\mathbb{R}$, we define the order $\leq_{a}$ on
$\left(-\infty,a\right]\times\left[a,\infty\right)$ by pulling back
$\preceq$ from $\pi^{-1}\left(a\right)$:
\[
\left(x_{1},y_{1}\right)\leq_{a}\left(x_{2},y_{2}\right)\Longleftrightarrow\left(x_{1},a,y_{1}\right)\preceq\left(x_{2},a,y_{2}\right).
\]
 Having done this for every scalar a a, we have
\[
\left(x_{1},a,y_{1}\right)\preceq\left(x_{2},b,y_{2}\right)\Longleftrightarrow\begin{cases}
\textrm{\ensuremath{a<b,} or }\\
\textrm{\ensuremath{a=b} and \ensuremath{\left(x_{1},y_{1}\right)\leq_{a}\left(x_{2},y_{2}\right)},} & \textrm{}
\end{cases}
\]
for $\left(x_{1},a,y_{1}\right),\left(x_{2},b,y_{2}\right)\in\mathbb{T}$.

The converse of this result is trivial.
\end{proof}
\begin{rem}
If $\preceq$ is a total order compatible with the natural projection
$\pi$, then 
\[
\left\{ \alpha\in\mathbb{T}\,:\,\pi\left(\alpha\right)>0\right\} \subseteq P_{\preceq}.
\]
 If $\preceq_{1},\preceq_{2}$ are two orders compatible with $\pi$,
then 
\[
P_{\preceq_{1}}\triangle P_{\preceq_{2}}=\left\{ \alpha\in\pi^{-1}\left(0\right)\,:\,0\prec_{1}\alpha\right\} \triangle\left\{ \alpha\in\pi^{-1}\left(0\right)\,:\,0\prec_{2}\alpha\right\} ,
\]
where $\triangle$ denotes the symmetric difference. That is, for
distinct orders compatible with the natural projection, the sets of
positive elements differ only on the fiber $\pi^{-1}\left(0\right)$.
\end{rem}

\begin{prop}
\label{prop:PositivosNumaOrdemNormal} If $\preceq$ is a regular
order compatible with the natural projection and admitting positive
$0$-symmetric elements, then 
\[
\left[\pi^{-1}\left(\left(0,\infty\right)\right)\cup\left\{ \left(x,0,y\right)\in\pi^{-1}\left(0\right)\,:\,\textrm{\ensuremath{-x\leq y} e \ensuremath{y>0}}\right\} \right]\subseteq P_{\preceq}\subseteq P_{\leq_{231}},
\]
where $\leq_{231}$ is defined in Example \ref{exa:Ordensijk}. Furthermore,
$\preceq$ satisfies the WLT if and only if 
\[
P_{\preceq}=\pi^{-1}\left(\left(0,\infty\right)\right)\cup\left\{ \left(x,0,y\right)\in\pi^{-1}\left(0\right)\,:\,\textrm{\ensuremath{-x\leq y} e \ensuremath{y>0}}\right\} .
\]
\end{prop}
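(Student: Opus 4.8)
The plan is to establish the sandwich inclusion and the WLT-equivalence separately, handling each of the two inclusions by hand rather than through Theorem \ref{thm:OrdensCompativeisSobreAsFibras}, which is unavailable here because the first assertion does not assume the WLT. Write $L=\pi^{-1}\left(\left(0,\infty\right)\right)\cup\{(x,0,y):-x\le y,\ y>0\}$ for the claimed lower bound, and observe that unwinding the lexicographic definition of $\le_{231}$ against $0=\left(0,0,0\right)$ gives $P_{\le_{231}}=\pi^{-1}\left(\left(0,\infty\right)\right)\cup\{(x,0,y):y>0\}$. Thus the two bounds coincide on every fiber $\pi^{-1}(t)$ with $t\ne0$ and differ only on the sliver $S=\{(x,0,y):y>0,\ x+y<0\}$ inside $\pi^{-1}(0)$; pinning down $S$ is exactly what the WLT clause controls.

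For $L\subseteq P_{\preceq}$: the inclusion $\pi^{-1}\left(\left(0,\infty\right)\right)\subseteq P_{\preceq}$ is immediate from compatibility with $\pi$ (the preceding Remark). For a fiber-$0$ element $(x,0,y)$ with $x+y\ge0$ and $y>0$, I split on the sign of $x$: if $x=0$ then MIN-MAX compatibility gives $0\prec(0,0,y)$ at once; if $x<0$ the key step is the decomposition $(x,0,y)=(x,0,-x)+(0,0,x+y)$, where $(x,0,-x)\in I_0\subseteq P_{\preceq}$ by the positive-$0$-symmetric hypothesis and $(0,0,x+y)\succeq0$ by MIN-MAX, the latter being a legitimate triangular number precisely because $x+y\ge0$. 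Sum compatibility then yields $(x,0,y)\succeq(x,0,-x)\succ0$.

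For $P_{\preceq}\subseteq P_{\le_{231}}$ I argue by contrapositive, proving $\alpha\le_{231}0\Rightarrow\alpha\preceq0$. The description of $\le_{231}$ shows that $\alpha\le_{231}0$ forces either the peak to satisfy $a<0$, handled by $\pi$-compatibility, or $\alpha=(a_1,0,0)$ with $a_1\le0$, handled by MIN-MAX since $MIN\left((a_1,0,0),0\right)=(a_1,0,0)$; either way $\alpha\preceq0$, so $\alpha\notin P_{\preceq}$.

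Finally I prove WLT $\Leftrightarrow P_{\preceq}=L$. For the forward implication, assuming the WLT I only need to expel $S$ from $P_{\preceq}$: given $\alpha=(x,0,y)\in S$ one has $\alpha\notin I_0$ (since $x+y\ne0$) and $-\alpha=(-y,0,-x)\in L\subseteq P_{\preceq}$, so $0\prec-\alpha$ and the WLT forbids $0\prec\alpha$; together with $L\subseteq P_{\preceq}\subseteq L\cup S$ this forces $P_{\preceq}=L$. For the converse, assuming $P_{\preceq}=L$ I verify the WLT on each $\alpha=(a_1,a,a_2)$ with $\alpha\ne0$ and $\alpha\notin I_0$ by cases: if $a\ne0$ then exactly one of $\alpha,-\alpha$ has positive peak and hence lies in $L$; if $a=0$ then $\alpha\ne0$ and $\alpha\notin I_0$ force $a_1+a_2\ne0$, and the sign of $a_1+a_2$ decides which of $\alpha,-\alpha$ falls in $L$. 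The step I expect to be most delicate is this final bookkeeping on $\pi^{-1}(0)$: correctly managing the boundary inequalities ($y>0$ versus $y\ge0$ and $x+y\ge0$ versus $x+y>0$) and using $\alpha\notin I_0$ to discard the degenerate symmetric case, so that ``exactly one'' of the three WLT alternatives truly holds.
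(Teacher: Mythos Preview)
Your argument is correct and follows the same core strategy as the paper: obtain $\pi^{-1}((0,\infty))\subseteq P_\preceq$ from $\pi$-compatibility, and for $(x,0,y)$ with $x+y\ge 0$, $y>0$ use the positive $0$-symmetric element $(x,0,-x)$ together with MIN-MAX to climb up. Your sum decomposition $(x,0,y)=(x,0,-x)+(0,0,x+y)$ is equivalent to the paper's one-step MIN-MAX comparison $0\prec(x,0,-x)\preceq(x,0,y)$ (since $(x,0,-x)\le_{KY}(x,0,y)$ directly).

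Where you differ is in completeness rather than method: the paper's proof is terse to the point of omission---it never explicitly argues the upper inclusion $P_\preceq\subseteq P_{\le_{231}}$, and it only sketches the forward implication of the WLT equivalence, leaving the converse unstated. Your contrapositive for the upper bound (reducing $\alpha\le_{231}0$ to the two cases $a<0$ or $\alpha=(a_1,0,0)$, both handled immediately) and your case-by-case verification that $P_\preceq=L$ forces the WLT genuinely fill these gaps. The boundary bookkeeping you flagged as delicate is indeed the only place where care is needed, and your treatment of it is sound.
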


\begin{proof}
By \emph{Lemma \ref{lem:CaracOrdensPreservadasPelaProjNatural}},
$\pi^{-1}\left(\left(0,\infty\right)\right)\subset P_{\preceq}$.
Since $\preceq$ is compatible with \emph{MIN-MAX} and $I_{0}\subset P_{\preceq}$,
for any $x<0$, $\left(x,0,y\right)\in P_{\preceq}$ for all $y\geq-x$,
because $0\prec\left(x,0,-x\right)\preceq\left(x,0,y\right)$. 

Furthermore, if $\preceq$ satisfies the WLT, then:
\[
\left[\pi^{-1}\left(\left(-\infty,0\right)\right)\cup\left\{ \left(x,0,y\right)\in\pi^{-1}\left(0\right)\,:\,\textrm{\ensuremath{-x>y} }\right\} \right]\cap P_{\preceq}=\emptyset,
\]
since 
\[
\left[\pi^{-1}\left(\left(0,\infty\right)\right)\cup\left\{ \left(x,0,y\right)\in\pi^{-1}\left(0\right)\,:\,\textrm{\ensuremath{-x\leq y} e \ensuremath{y>0}}\right\} \right]\subseteq P_{\preceq}.
\]
\end{proof}
\emph{Proposition} \emph{\ref{prop:PositivosNumaOrdemNormal}} reveals
the order $\leq_{231}$, defined in \emph{Example} \emph{\ref{exa:Ordensijk}},
has the largest set of positive elements among all regular orders
compatible with the natural projection that admit positive $0$-symmetric
elements. Among these, the orders satisfying the WLT are those with
the smallest set of positive elements.

As a consequence of \emph{Theorem} \emph{\ref{thm:OrdensCompativeisSobreAsFibras}}
and \emph{Lemma} \emph{\ref{lem:CaracOrdensPreservadasPelaProjNatural}},
there are exactly two regular orders compatible with the natural projection
that satisfy the WLT:
\begin{itemize}
\item The \emph{upper-sum} order, which admits positive $0$-symmetric elements:
\begin{equation}
\left(a_{1},a,a_{2}\right)\leq^{+}\left(b_{1},b,b_{2}\right)\Longleftrightarrow\left\{ \begin{array}{l}
\textrm{\ensuremath{a<b}, or }\\
\textrm{\ensuremath{a=b} and \ensuremath{a_{1}+a_{2}<b_{1}+b_{2},} or }\\
\textrm{\ensuremath{a=b} and \ensuremath{a_{1}+a_{2}=b_{1}+b_{2}} and \ensuremath{a_{2}\leq b_{2}}}.
\end{array}\right.\label{eq:SomaSuperior}
\end{equation}
\item The l\emph{ower-sum} order, which admits negative $0$-symmetric elements:\emph{
}
\begin{equation}
\left(a_{1},a,a_{2}\right)\leq_{+}\left(b_{1},b,b_{2}\right)\Longleftrightarrow\left\{ \begin{array}{l}
\textrm{\ensuremath{a<b}, or }\\
\textrm{\ensuremath{a=b} and \ensuremath{a_{1}+a_{2}<b_{1}+b_{2},} or }\\
\textrm{\ensuremath{a=b} and \ensuremath{a_{1}+a_{2}=b_{1}+b_{2}} and \ensuremath{a_{1}\leq b_{1}}.}
\end{array}\right.\label{eq:SomaInferior}
\end{equation}
 
\end{itemize}
The lower sum order has no positive $0$-symmetric elements ($I_{0}\cap P_{\leq_{+}}=\emptyset$)
and thus does not induce a fuzzy absolute value. Moreover, its set
of positive elements is contained in the set of positive elements
of every regular order compatible with the natural projection. 

Thus, the following result is established:
\begin{thm}
\label{thm:UnicidadeOrdemNormalLFT} The upper-sum and lower-sum orders
are the only total orders on $\mathbb{T}$ that are regular, compatible
with the natural projection, and satisfy WLT. 
\end{thm}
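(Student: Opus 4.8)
The plan is to show that the three hypotheses leave no freedom at all: compatibility with the natural projection fixes all comparisons between different fibers, while regularity together with the WLT fixes the comparison inside each fiber, and the two possible fiber-orderings are exactly what distinguish the upper-sum order from the lower-sum order. First I would take an arbitrary total order $\preceq$ on $\mathbb{T}$ that is regular, compatible with $\pi$, and satisfies the WLT, and invoke \emph{Proposition \ref{prop:CompatibilidadeComPositivos}} to split into the two mutually exclusive cases $I_{0}\subseteq P_{\preceq}$ and $I_{0}\cap P_{\preceq}=\emptyset$. These two cases are precisely what will yield $\leq^{+}$ and $\leq_{+}$, respectively.

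The central step is to splice together the two structural results already proved. By \emph{Lemma \ref{lem:CaracOrdensPreservadasPelaProjNatural}}, compatibility with $\pi$ forces the comparison of elements in distinct fibers to be decided by their central values alone, so $\left(a_{1},a,a_{2}\right)\prec\left(b_{1},b,b_{2}\right)$ whenever $a<b$; only the restriction of $\preceq$ to each fiber $\pi^{-1}\left(t\right)$ remains undetermined. But that restriction is exactly what \emph{Theorem \ref{thm:OrdensCompativeisSobreAsFibras}} computes: writing $\left(a_{1},a,a_{2}\right)=\left(x,t,y\right)$ so that $x=a_{1}$ is the lower endpoint and $y=a_{2}$ the upper one, the fiber order reads ``$x_{1}+y_{1}<x_{2}+y_{2}$, or equal sums with $y_{1}\leq y_{2}$'' when $I_{0}\subseteq P_{\preceq}$, and ``$x_{1}+y_{1}<x_{2}+y_{2}$, or equal sums with $x_{1}\leq x_{2}$'' when $I_{0}\cap P_{\preceq}=\emptyset$. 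Reassembling the between-fiber rule with each fiber rule and matching clause by clause against \emph{Equations \ref{eq:SomaSuperior}} and \emph{\ref{eq:SomaInferior}}, the condition $a<b$ is the first clause of both orders, the condition $a_{1}+a_{2}<b_{1}+b_{2}$ is the second clause of both, and the tiebreakers $y_{1}\leq y_{2}$ (that is, $a_{2}\leq b_{2}$) and $x_{1}\leq x_{2}$ (that is, $a_{1}\leq b_{1}$) are exactly the third clauses of the upper-sum and lower-sum orders. Since every pair $\alpha,\beta\in\mathbb{T}$ has thereby been assigned a fixed comparison and both $\preceq$ and the matched named order are total relations, I conclude $\preceq\,=\,\leq^{+}$ in the first case and $\preceq\,=\,\leq_{+}$ in the second.

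For the converse I would verify that each of $\leq^{+}$ and $\leq_{+}$ genuinely satisfies all three properties. Compatibility with $\pi$ is immediate from \emph{Lemma \ref{lem:CaracOrdensPreservadasPelaProjNatural}}, as both orders compare central values first; arithmetic and \emph{MIN-MAX} compatibility follow by direct inspection of the lexicographic clauses (so both are regular); and the WLT holds because $I_{0}\subseteq P_{\leq^{+}}$ while $I_{0}\cap P_{\leq_{+}}=\emptyset$, so in either case each $\alpha\notin I_{0}$ meets exactly one of $\alpha=0$, $0\prec\alpha$, $0\prec-\alpha$.

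I expect the main obstacle to be bookkeeping rather than conceptual, since the substantive work is carried by \emph{Theorem \ref{thm:OrdensCompativeisSobreAsFibras}} and \emph{Lemma \ref{lem:CaracOrdensPreservadasPelaProjNatural}}. The only delicate point is keeping the endpoint conventions straight---pairing $x=a_{1}$ with the lower-sum tiebreaker and $y=a_{2}$ with the upper-sum tiebreaker---so that the two structural results snap together into precisely the two displayed orders with no case double-counted or omitted.
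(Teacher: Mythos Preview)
Your proposal is correct and follows essentially the same approach as the paper: combine \emph{Lemma \ref{lem:CaracOrdensPreservadasPelaProjNatural}} (which fixes comparisons across fibers) with \emph{Theorem \ref{thm:OrdensCompativeisSobreAsFibras}} (which, under the case split from \emph{Proposition \ref{prop:CompatibilidadeComPositivos}}, fixes comparisons within each fiber), and then identify the two resulting orders with \emph{Equations \ref{eq:SomaSuperior}} and \emph{\ref{eq:SomaInferior}}. Your write-up is in fact more explicit than the paper's, which simply states the theorem as a consequence of those two results; the converse verification you include is handled only implicitly in the paper.
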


Compatibility with the natural projection is the most restrictive
of the compatibility conditions we are considering, as shown in \emph{Lemma}
\emph{\ref{lem:CaracOrdensPreservadasPelaProjNatural}}. The following
example shows that this condition can arise naturally when considering,
for instance, methods for ranking fuzzy numbers based on the concept
of \emph{weighted possibilistic mean} introduced by Fullér and Majlender
\cite{Fuller and Majlender}.
\begin{example}
\label{exa:OrdemMolinari} Based on the concept of weighted possibilistic
mean, Molinari \cite{Molinari} considers a new ranking criterion
for generalized triangular fuzzy numbers ---a set of fuzzy numbers
that strictly contains $\mathbb{T}$. First, Molinari introduces a
partial order relation on generalized triangular fuzzy numbers which,
when restricted to $\mathbb{T}$, gives us the following partial order
(see \emph{Theorem 6.1} of \cite{Molinari}): 
\[
\left(a_{1},a,a_{2}\right)\preceq^{w}\left(b_{1},b,b_{2}\right)\,\Longleftrightarrow\,\textrm{\ensuremath{a\leq b} and \ensuremath{a_{1}+2a+a_{2}\leq b_{1}+2b+b_{2}}},
\]
 which is compatible with arithmetic operations and \emph{MIN-MAX}
operators. 

Subsequently, Molinari obtains a total preorder on generalized triangular
fuzzy numbers that, when restricted to $\mathbb{T}$, yields (as a
consequence of \emph{Theorem 6.1} and \emph{Proposition 6.2} of \cite{Molinari})
the following total preorder:
\[
\left(a_{1},a,a_{2}\right)\preceq^{W}\left(b_{1},b,b_{2}\right)\Longleftrightarrow\left\{ \begin{array}{l}
\textrm{\ensuremath{a<b}, or }\\
\textrm{\ensuremath{a=b} and \ensuremath{a_{1}+a_{2}\leq b_{1}+b_{2}}}.
\end{array}\right.
\]
We have that $\preceq^{W}$ is a total preorder, regular, and compatible
with the natural projection. Note that for any $\alpha\in\mathbb{T}$,
\[
\left\{ \beta\in\mathbb{T}\,:\,\beta\sim^{W}\alpha\right\} =N_{ull}\left(\alpha\right).
\]

Because it's compatible with arithmetic operations, \emph{Theorem}
\emph{\ref{th:OrdemNosAnuladores}} guarantees only two possible orderings
for nullifying sets. Therefore, there exist exactly two extensions
of $\preceq^{W}$ to a total order on $\mathbb{T}$: the upper-sum
order $\leq^{+}$ and the lower-sum order $\leq_{+}$. In particular,
the upper sum order is the unique extension of $\preceq^{W}$ that
induces a fuzzy absolute value.
\end{example}

\begin{thm}
\label{thm:OrdensRegularesCompativeisProjNatural} All regular orders
compatible with the natural projection and admitting positive 0-symmetric
numbers induce the same fuzzy absolute value, $\left|\cdot\right|_{\leq^{+}}$.
\end{thm}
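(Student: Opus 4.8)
The plan is to prove the stronger statement that the function $\alpha\mapsto\left|\alpha\right|_{\preceq}=\underset{\preceq}{max}\left\{\alpha,-\alpha\right\}$ is one and the same map $\mathbb{T}\to\mathbb{T}$ for every order $\preceq$ satisfying the hypotheses. Since the upper-sum order $\leq^{+}$ of \emph{(\ref{eq:SomaSuperior})} is itself such an order (regular, compatible with $\pi$, satisfying the WLT and having positive $0$-symmetric numbers, by \emph{Theorem \ref{thm:UnicidadeOrdemNormalLFT}}), this common map must be $\left|\cdot\right|_{\leq^{+}}$, which is a genuine fuzzy absolute value by \emph{Theorem \ref{thm:LFTeValorAbsoluto}}. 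Thus it suffices to show that, for each fixed $\alpha$, the $\preceq$-comparison of the pair $\left\{\alpha,-\alpha\right\}$ is forced and does not depend on the particular order. I would split the argument according to the value $a=\pi\left(\alpha\right)$.

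If $a\neq 0$, then $\alpha=\left(a_{1},a,a_{2}\right)$ and $-\alpha=\left(-a_{2},-a,-a_{1}\right)$ lie in the distinct fibers $\pi^{-1}\left(a\right)$ and $\pi^{-1}\left(-a\right)$. Compatibility with the natural projection (\emph{Definition \ref{def:CompProjNatural}}) then forces $-\alpha\prec\alpha$ when $a>0$ and $\alpha\prec-\alpha$ when $a<0$; hence $\left|\alpha\right|_{\preceq}=\alpha$ for $a>0$ and $\left|\alpha\right|_{\preceq}=-\alpha$ for $a<0$, with no reference to the specific order.

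The only delicate fiber is $\pi^{-1}\left(0\right)$, and here I would use \emph{MIN-MAX} compatibility rather than positivity. Writing $\alpha=\left(a_{1},0,a_{2}\right)$ and $-\alpha=\left(-a_{2},0,-a_{1}\right)$, a direct check using \emph{Proposition \ref{prop:TFNcomparaveis}} shows that these two numbers are always Klir--Yuan comparable (case (ii) of that proposition is impossible for the pair $\left\{\alpha,-\alpha\right\}$, as both resulting configurations force $a_{1}+a_{2}>0$ and $a_{1}+a_{2}<0$ simultaneously): one has $MIN\left(\alpha,-\alpha\right)=\alpha$ exactly when $a_{1}+a_{2}\leq 0$ and $MIN\left(\alpha,-\alpha\right)=-\alpha$ exactly when $a_{1}+a_{2}\geq 0$. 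Compatibility with \emph{MIN-MAX} operators (\emph{Definition \ref{def:CompatibilidadeMinMax}}) therefore forces $-\alpha\prec\alpha$ if $a_{1}+a_{2}>0$ and $\alpha\prec-\alpha$ if $a_{1}+a_{2}<0$, while $a_{1}+a_{2}=0$ gives $\alpha=-\alpha\in I_{0}\cup\left\{0\right\}$ and a trivial maximum. In every case $\left|\alpha\right|_{\preceq}$ is pinned down independently of $\preceq$, and since $\leq^{+}$ is one of the admissible orders the common value is $\left|\cdot\right|_{\leq^{+}}$.

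The point I expect to be the main obstacle is conceptual rather than computational: by \emph{Proposition \ref{prop:PositivosNumaOrdemNormal}} two orders in the hypothesis may genuinely disagree, on the fiber $\pi^{-1}\left(0\right)$, about which elements $\left(x,0,y\right)$ with $x+y<0$ and $y>0$ are positive, so a priori one would expect $\left|\cdot\right|_{\preceq}$ to vary with $\preceq$. The resolution is exactly the observation above: the absolute value only ever compares the special pair $\left\{\alpha,-\alpha\right\}$, which lies in distinct fibers when $a\neq 0$ and is Klir--Yuan comparable when $a=0$ --- precisely the comparisons on which compatibility with $\pi$ and with \emph{MIN-MAX} force all such orders to agree. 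Finally, the hypothesis of positive $0$-symmetric numbers enters only to certify well-behavedness: via \emph{Proposition \ref{prop:CompatibilidadeComPositivos}(i)}, for every $\alpha$ either $0\preceq\alpha$ or $0\preceq-\alpha$, so the common map satisfies $0\preceq\left|\alpha\right|_{\preceq}$, and being equal to $\left|\cdot\right|_{\leq^{+}}$ it is the fuzzy absolute value furnished by \emph{Theorem \ref{thm:LFTeValorAbsoluto}}.
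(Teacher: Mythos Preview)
Your proof is correct and follows essentially the same route as the paper's: split according to $a=\pi(\alpha)$, use compatibility with $\pi$ to force $\left|\alpha\right|_{\preceq}$ when $a\neq 0$, and for $a=0$ observe that $\alpha=(a_{1},0,a_{2})$ and $-\alpha=(-a_{2},0,-a_{1})$ are always Klir--Yuan comparable (with $MAX(\alpha,-\alpha)=\alpha$ iff $a_{1}+a_{2}\geq 0$), so \emph{MIN-MAX} compatibility pins down the maximum. Your additional explanatory paragraph on the potential obstacle is a nice clarification, but the mathematical core is identical to the paper's argument.
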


\begin{proof}
Let \ensuremath{\preceq} be a regular order compatible with the natural
projection that admits positive 0-symmetric elements. We show that
$\left|\cdot\right|_{\preceq}=\left|\cdot\right|_{\leq^{+}}$.

By compatibility with $\pi$:

$\bullet$ if $\pi\left(\alpha\right)>0$, then $\left|\alpha\right|_{\preceq}=\alpha=\left|\alpha\right|_{\leq^{+}}$
because $-\alpha\prec0\prec\alpha$.

$\bullet$ if $\pi\left(\alpha\right)<0$, then $\left|\alpha\right|_{\preceq}=-\alpha=\left|\alpha\right|_{\leq^{+}}$.

For the case $\pi\left(\alpha\right)=0$:

$\bullet$ $MIN\left(\alpha,-\alpha\right)\in\left\{ \alpha,-\alpha\right\} $;

$\bullet$ $MAX\left(\alpha,-\alpha\right)=\alpha$ if and only if
$a_{2}\geq-a_{1}$, where $\alpha=\left(a_{1},0,a_{2}\right)$;

Equivalently, $\alpha\geq_{KY}-\alpha$ if and only if $\alpha\in P_{\leq^{+}}\cup\left\{ 0\right\} $.
Thus, $\left|\alpha\right|_{\preceq}=\left|\alpha\right|_{\leq^{+}}$
for all $\alpha\in\mathbb{T}$.
\end{proof}
\begin{example}
\label{exa:Ordem231InduzValorAbsolutoDifuso} By the previous theorem,
$\left|\cdot\right|_{\leq_{231}}=\left|\cdot\right|_{\leq^{+}}$,
where $\leq_{231}$ is the order defined in \emph{Example} \emph{\ref{exa:Ordensijk}}.
For arbitrary triangular fuzzy numbers, it is straightforward to verify
that:\\
\emph{i) }$0\leq_{231}\left|\alpha\right|_{\leq_{231}}$, and $0=\left|\alpha\right|_{\leq_{231}}$
if and only if $\alpha=0$; \\
\emph{ii)} $\left|t\alpha\right|_{\leq_{231}}=\left|t\right|\left|\alpha\right|_{\leq_{231}}$,
for $t\in\mathbb{R}$;\\
\emph{iii)} $\left|\alpha+\beta\right|_{\leq_{231}}\leq_{231}\left|\alpha\right|_{\leq_{231}}+\left|\beta\right|_{\leq_{231}}$. 

We now verify that:\\
\emph{iv) (triangular inequality) }$\left|\alpha-\gamma\right|_{\leq_{231}}\leq_{231}\left|\alpha-\beta\right|_{\leq_{231}}+\left|\beta-\gamma\right|_{\leq_{231}}$:

Suppose $\left|\cdot\right|_{\leq_{231}}$ does not satisfy the triangular
inequality. Then there exist $\alpha=\left(a_{1},a,a_{2}\right)$,
$\beta=\left(b_{1},b,b_{2}\right)$ and $\gamma=\left(c_{1},c,c_{2}\right)$
such that
\[
\left|\alpha-\gamma\right|_{\leq_{231}}>_{231}\left|\alpha-\beta\right|_{\leq_{231}}+\left|\beta-\gamma\right|_{\leq_{231}}.
\]
 Since $\left|\cdot\right|_{\leq_{231}}=\left|\cdot\right|_{\leq^{+}}$,
by \emph{Theorem \ref{thm:OrdensRegularesCompativeisProjNatural}},
we have 
\[
\left|\alpha-\gamma\right|_{\leq_{231}}>_{231}\left|\alpha-\beta\right|_{\leq_{231}}+\left|\beta-\gamma\right|_{\leq_{231}}\geq^{+}\left|\alpha-\gamma\right|_{\leq_{231}}.
\]
 Without loss of generality, assume $\left|\alpha-\gamma\right|_{\leq_{231}}=\alpha-\gamma=\left(a_{1}-c_{2},a-c,a_{2}-c_{1}\right)$.
Let $\eta=\left|\alpha-\beta\right|_{\leq_{231}}+\left|\beta-\gamma\right|_{\leq_{231}}$.
The inequality above implies: 
\[
\textrm{\ensuremath{\pi\left(\eta\right)=a-c}, \ensuremath{a_{2}-c_{1}>sup\,S_{upp}\left(\eta\right)} and \ensuremath{inf\,S_{upp}\left(\eta\right)+sup\,S_{upp}\left(\eta\right)\geq a_{1}+a_{2}-c_{1}-c_{2}}.}
\]
 There are only four possibilities for $\eta$:
\[
\eta=\left\{ \begin{array}{l}
\textrm{\ensuremath{\left(a_{1}-b_{2}+b_{1}-c_{2},a-c,a_{2}+b_{2}-b_{1}-c_{1}\right),} or}\\
\textrm{\ensuremath{\left(a_{1}+c_{1}-2b_{2},a+c-2b,a_{2}+c_{2}-2b_{1}\right),} or}\\
\textrm{\ensuremath{\left(2b_{1}-a_{2}-c_{2},2b-a-c,2b_{2}-a_{1}-c_{1}\right),} or}\\
\textrm{\ensuremath{\left(b_{1}+c_{1}-a_{2}-b_{2},c-a,b_{2}+c_{2}-a_{1}-b_{1}\right).} }
\end{array}\right.
\]
 For any of these cases, the conditions $\pi\left(\eta\right)=a-c$,
$a_{2}-c_{1}>sup\,S_{upp}\left(\eta\right)$ and $inf\,S_{upp}\left(\eta\right)+sup\,S_{upp}\left(\eta\right)\geq a_{1}+a_{2}-c_{1}-c_{2}$
cannot hold simultaneously. Therefore, the triangular inequality holds.

Hence, even without satisfying WLT, the order $\leq_{231}$ induces
a notion of absolute value that allows us to define a fuzzy distance
induced by $\leq_{231}$.
\end{example}

\section{\label{sec:ExemplosNumericos} Numerical examples}

Triangular fuzzy numbers are, by far, the most prevalent choice for
illustrative examples, particularly those demonstrating compatibility/incompatibility
between different ranking methods for fuzzy numbers. In references
\cite{Akyar et al.} and \cite{FRM}, the authors compiled comprehensive
sets of ranking methodologies applied to critical decision-making
scenarios, enabling comparative analysis with their respective proposed
approaches. We utilize these established examples to evaluate and
contrast the results with the ordering relations introduced in the
present work. To facilitate this comparative study, we adopt a systematic
nomenclature for the methods enumerated in each of these publications.

From \emph{Table I }of \cite{FRM}, we employ the following notation:

$\bullet$ Yager's methods: $\preceq_{Y_{1}}$, $\preceq_{Y_{2}}$,
and $\preceq_{Y_{3}}$;

$\bullet$ The Baas-Kwakernaak method: $\preceq_{BK}$;

$\bullet$ Baldwin-Guild methods: $\preceq_{BG_{1}}$, $\preceq_{BG_{2}}$,
and $\preceq_{BG_{3}}$;

$\bullet$ Jain's method: $\preceq_{J}$;

$\bullet$ Dubois-Prade methods: $\preceq_{DP_{1}}$ through $\preceq_{DP_{4}}$;

$\bullet$ The authors' own methods: $\preceq_{FGM_{1}}$ to $\preceq_{FGM_{5}}$,
including both optimistic and pessimistic approaches (see \emph{Example}
\emph{\ref{exa:OrdensPessimistaeOtimista}})

Following the enumeration of ranking methods in \emph{Table 2} of
\cite{Akyar et al.}, we additionally define:

$\bullet$ An alternative Yager method: $\preceq_{Y_{4}}$;

$\bullet$ Murakami et al.'s method: $\preceq_{MMI}$;

$\bullet$ Cheng's method: $\preceq_{C}$;

$\bullet$ Chen-Chen method: $\preceq_{CC}$;

$\bullet$ Lee-Chen method: $\preceq_{LC}$;

$\bullet$ The novel method proposed in \cite{Akyar et al.}: $\preceq_{AAD}$.

This systematic classification enables rigorous comparative analysis
between existing ranking methodologies and the novel ordering relations
proposed in our current work. 
\begin{example}
\label{exa:1} Consider the following triangular fuzzy numbers: $\alpha=\left(-0.5,-0.3,-0.1\right)$,
$\beta=\left(0.2806,0.4806,0.6806\right)$ and $\gamma=0.7=\left(0.7,0.7,0.7\right)$.
If $\preceq$ is a reasonable ranking method over some set of fuzzy
numbers containing $\mathbb{T}$, then we must have $\alpha\prec-\alpha$
and $\beta\prec\gamma$ according to \emph{Definition} \emph{\ref{def:MetodoRazoavel}.(vi)}.

As can be observed in \emph{Section 4} of \cite{Akyar et al.}, Cheng's
method yields $\alpha\succ_{C}-\alpha$, differing from all other
analyzed methods. Among these methods, only the $\preceq_{CC}$ and
$\preceq_{LC}$ methods are capable of comparing $\beta$ and $\gamma$,
but $\beta\succ_{LC}\gamma$. Consequently, no reasonable ranking
method can be derived from Cheng's method $\preceq_{C}$ or Lee-Chen's
method $\preceq_{LC}$.

The remaining methods listed in \cite{Akyar et al.} agree with both
the total sum and upper sum orders: $\alpha\prec-\alpha$ and $\beta\prec\gamma$
for $\prec\in\left\{ <_{T},<^{+}\right\} $. 
\end{example}

\begin{example}
\label{exa:2} In general, ranking methods exhibit significant difficulty
in selecting between pairs of triangular fuzzy numbers belonging to
the same nullifying set. This becomes evident when six of the nineteen
methods described in \cite{FRM} fail to distinguish between the numbers
$\alpha=\left(0.2,0.5,0.8\right)$ and $\beta=\left(0.4,0.5,0.6\right)$:
these fuzzy numbers are considered equivalent by the methods $\preceq_{Y_{1}}$,
$\preceq_{Y_{3}}$, $\preceq_{BK}$, $\preceq_{DP_{1}}$, $\preceq_{DP_{4}}$
and $\preceq_{FGM_{4}}$.

Seven of the nineteen methods demonstrate agreement with both the
total sum and upper sum orders, showing preference for $\alpha$ over
$\beta$: $\preceq_{Y_{2}}$, $\preceq_{BG_{2}}$, $\preceq_{J}$,
$\preceq_{DP_{2}}$, $\preceq_{FGM_{2}}$, $\preceq_{FGM_{5}}$, and
the optimistic method. The remaining methods prefer $\beta$ over
$\alpha$.

Consequently, no fuzzy absolute value can be induced by any order
on $\mathbb{T}$ that extends the following methods: $\preceq_{BG_{1}}$,
$\preceq_{BG_{3}}$, $\preceq_{DP_{3}}$, $\preceq_{FGM_{1}}$, $\preceq_{FGM_{3}}$,
or the pessimistic method.
\end{example}

\begin{example}
\label{exa:3} The method $\preceq_{AAD}$ likewise does not admit
any extension that induces a fuzzy absolute value, since it ranks
$\left(0.1,0.3,0.5\right)$ as being smaller than $\left(0.2,0.3,0.4\right)$.
The fact that half of the methods listed in \cite{Akyar et al.} agree
with $\preceq_{AAD}$ and show preference for $\left(0.2,0.3,0.4\right)$,
while the other half prefer $\left(0.1,0.3,0.5\right)$, serves as
further evidence of how critical nullifying sets are when the subject
is ordering.
\end{example}

\begin{example}
\label{exa:4} Another critical case arises in the comparison between
$\alpha=\left(0.35,0.5,1\right)$ and $\beta=\left(0.15,0.65,0.8\right)$.
Among the 19 methods analyzed in \cite{FRM}, five prefer $\beta$
over $\alpha$, agreeing with the upper sum order $\alpha<^{+}\beta$.
These are: $\preceq_{Y_{2}}$, $\preceq_{BK}$, $\preceq_{J}$, $\preceq_{DP_{1}}$,
and $\preceq_{D_{4}}$. The remaining 14 methods prefer $\alpha$
over $\beta$, and agree with the total sum order $\alpha>_{T}\beta$.
\end{example}

\begin{description}
\item [{Acknowledgments}] I am particularly grateful to Nicolas Zumelzu
for introducing me to fuzzy number theory through both personal communications
and his published work, and to B. Bedregal for reading and providing
comments on the initial manuscript.
\end{description}

\end{document}